\newtheorem{theo}{Theorem}
\newtheorem{defi}{Definition}[subsection]
\newtheorem{propo}{Proposition}[subsection]
\newtheorem{lemme}{Lemma}[subsection]
\newtheorem{conjecture}{Conjecture}
\newcommand{\ensemblenombre}[1]{\mathbb{#1}}
\newcommand{\N}{\ensemblenombre{N}}
\newcommand{\E}{\ensemblenombre{E}}
\newcommand{\Z}{\ensemblenombre{Z}}
\newcommand{\R}{\ensemblenombre{R}}
\newcommand{\Scb}{S}
\newcommand{\A}{\mathcal{A}}
\newcommand{\tomega}{\tilde{\omega}}
\newcommand{\Ocb}{\Omega}
\newcommand{\Fcb}{\mathcal{F}}
\newcommand{\Sc}{\tilde{S}}
\newcommand{\Oc}{\tilde{\Omega}}
\newcommand{\Fc}{\tilde{\mathcal{F}}}
\newcommand{\tpi}{\tilde{\pi}}
\newcommand{\1}{\mathds{1}}
\begin{document}

\title{Infinite Volume Continuum Random Cluster Model}

\author[1]{David Dereudre}
\author[1]{Pierre Houdebert}
\affil[1]{Laboratoire de Math\'ematiques Paul Painlev\'e\\University of Lille 1, France}

\maketitle

\begin{abstract}

{The continuum random cluster model is defined as a Gibbs modification of the stationary Boolean model in $\R^d$ with intensity $z>0$ and the law of radii $Q$. The formal unormalized density is given by $q^{N_{cc}}$ where $q>0$ is a fixed parameter and $N_{cc}$ the number of connected components in the random germ-grain structure. In this paper we prove the existence of the model in the infinite volume regime for a large class of parameters including the case $q<1$ or distributions $Q$ without compact support. In the extreme  setting of non integrable radii (i.e. $\int R^d Q(dR)=\infty$) and $q$ is an integer larger than 1, we prove that for $z$ small enough the continuum random cluster model is not unique; two different probability measures solve the DLR equations. We conjecture that the uniqueness is recovered for $z$ large enough which would provide a phase transition result. Heuristic arguments are given. Our main tools are the compactness of level sets of the specific entropy, a fine study of the quasi locality of the Gibbs kernels and a  Fortuin-Kasteleyn representation via Widom-Rowlinson models with random radii. }
  \bigskip

\noindent {\it Keywords.} Gibbs point process ; phase transition ; specific entropy ; Boolean model ; Widom-Rowlinson model ; Fortuin-Kasteleyn representation 

\end{abstract}

\section{Introduction}

In this paper we are interested in a continuum version of the random cluster model usually defined on a deterministic graph. 
The reference model is the stationary Poisson Boolean model with intensity $z>0$ and the law of radii $Q$ a probability measure on $\R^+$. 
It is built by union of balls in $\R^d$ centred to the points of a stationary Poisson point process with intensity $z>0$ and with random independent radii following the distribution $Q$. 
The finite volume continuum random cluster model is then defined as a 
penalized Boolean model
 in some bounded window $\Lambda$. The  unormalized density is given by $q^{N_{cc}}$ where $q>0$ is a positive real number and $N_{cc}$ denotes the number of connected components of the random closed set considered. For this model, the mean number of connected components is increasing with respect to $q$ which provides a clear interpretation of this parameter. For $q=1$ we recover the standard Poisson Boolean model. In the infinite volume regime a global density is senseless and a definition of the continuum random cluster model (called CRCM in the following) via Gibbs modifications is required. 
Precisely a CRCM is a solution of the standard DLR equations \eqref{DLR}. Existence, uniqueness and  non-uniqueness questions arise.

Originally the random cluster model is a lattice model introduced in the late 1960' by Fortuin and Kasteleyn to unify the models of percolation as Ising and Potts models. Most properties and results about this model, such as existence of random cluster model on infinite graphs, percolation property and phase transition property can be found in \cite{ghm,grimbook}. In the continuum setting the CRCM  has been also introduced for its relations with the continuum Potts model and the Widom-Rowlinson model. It led to new proofs of phase transition for those models, see \cite{cck} and \cite{gh}. The CRCM is also studied in stochastic geometry and spatial statistics as an interacting random germ-grain model \cite{A-Moller08}. For a suitable parameter $q$ the CRCM fits as best as possible the clustering of the real dataset. The estimation of the parameter $q$ and the law of radii $Q$ is studied in \cite{A-MollerHel10}.

 All these works, including those in statistical mechanics, involve only the finite volume CRCM. The infinite volume version has not really been studied and highlighted as in its analogous on deterministic graphs. However its interests are numerous in statistical physics and spatial statistics. Involving physical considerations, phase transition phenomenons are observable from infinite volume CRCM; it is believed that the uniqueness of the CRCM would be violated for special critical values of $z$, $q$. Many conjectures and open questions for the lattice models concern the continuum case as well. In stochastic geometry, the infinite volume CRCM provides a more relevant model than the Boolean model for the applications in material science, microemulsion modelling, etc. Its macroscopic properties (mean value, conductivity, permeability) can be studied via stationary tools as Palm theory and ergodic theory. Finally in spatial statistics, the existence of models in infinite volume regime enables the study of the asymptotic properties of estimators, functionals, etc. For example the maximum likelihood estimator of the parameter $q$ along a sequence of increasing observable windows $(\Lambda_n)$ requires the existence of the model in the whole space.

The existence of the infinite volume CRCM has not been proved in a general setting of random radii, continuum parameter $q>0$ and $z>0$. However it is known that it could be constructed via a colour-blind Widom Rowlinson   in the setting where $q$ is an integer values and the radii are not random  \cite{cck}.
 The aim of this paper is to provide the existence of the model for the larger class of parameters as possible. In some case, the  non uniqueness is also proved. Our first theorem gives the existence of the CRCM for any distribution $Q$ with compact support, any $q>0$ and $z>0$. In the case of unbounded radii, the existence is proved if $Q$ has a $d$-moment (i.e. $\int R^dQ(dR)<+\infty$), $q\ge 1$ and $z>0$. In the case where $Q$ does not have a $d$-moment, the existence  is trivial since the Poisson Boolean model  is a CRCM itself. However in a second theorem we prove the existence of another CRCM leading to a non uniqueness Gibbs measures phenomenon. This result is obtained in the case where $q$ is an integer and $z$ is small enough. Using a Pirogov-Sinai approach, this non uniqueness result could provide an interesting tool for proving a phase transition phenomenon in the approximation setting $\int R^dQ(dR)\to \infty$. However we think that it has its own interest as well since  non uniqueness results are quite rare for continuum models. We conjecture that  for $z$ large enough the uniqueness of Gibbs measures is recovered in the non-integrable case. It would provide a phase transition where the uniqueness is lost only for $z$ small enough. This behaviour is unusual for Gibbs point processes where the uniqueness is in general lost for $z$ large enough. Heuristic arguments of the conjecture are given.

The proof of the first theorem is based on the compactness of the level set of the specific entropy and a fine study of the quasi locality of the Gibbs kernels. This strategy have already  successfully applied for proving the existence of several Gibbs models \cite{david,dereudredrouilhetgeorgii, gh}.
In the present paper the very long range dependence is our major problem. Indeed the radii are not bounded and the influence of a ball can be felt far away if it splits a large connected component when it is removed. Such long range dependence were not dealt in the papers mentioned above. In the extreme setting of the second Theorem, we did not succeed to manage the long range dependence of the interaction as in the the first theorem. The non-integrability of the radii may produce balls with too large radii. So we turned to a Fortuin-Kasteleyn representation of the CRCM via a colour-blind Widom-Rowlinson model as in \cite{cck,ghm}. In this setting the DLR equations are simpler to obtain since the non overlapping assumption for balls with different colours confines naturally the range of the interaction. In this setting $q$ represents the number of colors and it is the reason why $q$ is an integer.

Finally note that the standard FKG inequalities, which are abundantly used for the random cluster models on graphs, are not satisfied in the present setting. In particular the thermodynamic limit of finite volume Gibbs measures to the infinite volume Gibbs measure can not be proved. Only the convergence of the empirical field of the finite volume Gibbs measures is obtained.

In Section \ref{section.Notations.Resultats} we introduce the notations and give the formal definition of the CRCM using the DLR formalism. Then we give both main theorems mentioned above in the Section \ref{section.resutats} devoted to the results. The proof of the first existence theorem is given in Section \ref{section.preuve.theo1} and the second theorem in Section \ref{section.preuve.theo2}. The Heuristic arguments of the conjecture are presented in Section \ref{section.conjecture}.

\section{Notations and Results}
\label{section.Notations.Resultats}
\subsection{State space and reference measure}

For $d$ at least 2, $\Scb$ denotes the space $\R^d \times \R^+$ endowed with the Borel $\sigma$-algebra.
$\Ocb$ stands for the set of non negative integer-valued measures $\omega$ on $S$ with finite mass on set $\Lambda\times\R^+$ for any bounded set $\Lambda \subset \R^d$. 
An element $\omega$ of $\Ocb$ is called "configuration" and can be represented as $\omega=\sum_{i\in I} \delta_{(x_i,R_i)}$ for a finite or infinite sequence $(x_i,R_i)_{i\in I}$ of points in $\Scb$ without accumulation points for the sequence $(x_i)_{i\in I}$. 
$\Omega$ is equipped with the classical $\sigma$-algebra $\Fcb$ generated by the counting variables $\omega \mapsto \omega(\Gamma)$ where $\Gamma$ is a bounded Borel subset of $\Scb$. We denote by $\Ocb ^f$ the subset of finite configurations. For a subset $\Lambda$ of $\R^d$, the configuration restricted to $\Lambda$ is defined by $\omega_{\Lambda}(.):=\omega (.\cap \Lambda \times \R^+)$ and $\Fcb _{\Lambda}$ is the sub $\sigma$-algebra of $\Fcb$ generated by the counting variables $\omega \mapsto \omega(\Gamma)$ where $\Gamma$ is a bounded subset of $\Lambda \times \R ^+$. We write $(x,R)\in \omega$ if $\omega(\{(x,R)\})>0$. 
For a configuration $\omega$ and a subset $\Lambda$ of $\R ^d$, $\omega(\Lambda)$ denotes the number of points $(x,R) \in \omega$ such that $x \in \Lambda$.
At each configuration $\omega$ we associate its germ-grain structure 
$$L(\omega)=\underset{(x,R)\in \omega}\bigcup B(x,R)$$ where $B(x,R)$ is the Euclidean closed ball of center $x$ and radius $R$.

For a positive $z>0$ and a probability measure $Q$ on $\R^+$, let $\pi^{z,Q}$ be the distribution on $\Omega$ of the Poisson point process of intensity measure $m= z \lambda ^{(d)} \otimes Q$. It is the distribution of the homogeneous Poisson point process on $\R^d$ with independent marks distributed by $Q$. For $\Lambda \subseteq \R ^d$, $\pi^{z,Q}_{\Lambda}$ denotes the projection of $\pi^{z,Q}$ on $\Lambda \times \R^+$. The random closed set $L$ under the law $\pi^{z,Q}$ is the so-called Poisson Boolean model with intensity $z>0$ and law of radii $Q$.

In the following a probability $P$ on $\Omega$ is called stationary if it is invariant under the translations by vectors in $\Z^d$. A definition with translations by vectors in $\R^d$ could have been considered as well. 

\subsection{Interaction}

 For any configuration $\omega$, the connected components in $L(\omega)$ are defined via the graph of connections $\mathcal{G}(\omega)=(\mathcal{V}(\omega),\mathcal{E}(\omega))$ where the vertices are $\mathcal{V}(\omega)=\{(x,R)\in\omega\}$ and the edges $\mathcal{E}(\omega)=\{\{(x,R),(y,R')\}\subset \mathcal{V}(\omega), \text{ such that }   B(x,R)\cap B(y,R)\neq \emptyset\}$. A connected component in $L(\omega)$ is defined as the union of balls $B(x,R)$ for $(x,R)$ in a connected component of $\mathcal{G}(\omega)$. Note that it could be different from a topological connected component in  $L(\omega)$. For instance the configuration $\omega=\delta_{(0,0)}+\sum_{n=1}^{+\infty}\delta_{(n,n-1/n)}$ has two connected components in $\mathcal{G}(\omega)$ and only one topological connected component in $L(\omega)$. For finite configurations, both definitions are equivalent.

 For $q>0$ fixed, the interaction between the particles is given by the unnormalized density 
$$
q^{ N_{cc}(\omega)}, \ \omega \in \Ocb ^f,
$$
where $N_{cc}(\omega)$ denotes the number of connected components of  $L(\omega)$ (or equivalently in $\mathcal{G}(\omega)$). This density is well defined only for finite configurations. As usual, for infinite configurations we define a local conditional density. 
\begin{propo}
For any $\omega \in \Ocb$ and $\Lambda \subseteq \R^d$ bounded, the following limit
\begin{equation}\label{definitionlimite}
N_{cc}^{\Lambda}(\omega) = \underset{\Delta \to \R^d}{\lim}
\left(  
N_{cc}(\omega_{\Delta}) - N_{cc}(\omega_{\Delta \setminus \Lambda})
\right)
\end{equation}
exists and is called local number of connected components  in $\Lambda$. The limit is taken along any increasing sequence of sets $(\Delta_n)$. 
\end{propo}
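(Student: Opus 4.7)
For bounded $\Delta \supseteq \Lambda$, set $f_\omega(\Delta) := N_{cc}(\omega_\Delta) - N_{cc}(\omega_{\Delta\setminus\Lambda})$; note that $n := \omega(\Lambda\times\R^+)$ is finite since $\Lambda$ is bounded. Any increasing sequence $(\Delta_k)$ with $\bigcup_k\Delta_k = \R^d$ eventually contains $\Lambda$, so to prove the proposition it is enough to establish two claims about the integer-valued function $f_\omega$: (i) it is non-decreasing under inclusion on bounded sets containing $\Lambda$; (ii) $f_\omega(\Delta) \leq n$ for all such $\Delta$. Granting these, $(f_\omega(\Delta_k))$ is a non-decreasing integer sequence bounded by $n$, hence eventually constant; comparing two exhausting sequences via (i) then shows that the common limit equals $\sup_\Delta f_\omega(\Delta)$, independent of the choice of sequence.

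For (i), take $\Delta \subseteq \Delta'$ both bounded and containing $\Lambda$. The configuration $\omega_{\Delta'\setminus\Delta}$ is finite and supported in $(\Delta'\setminus\Delta)\times\R^+$, which is disjoint from $\Lambda\times\R^+$; we may thus add its points one at a time and reduce to the case of adding a single $(y,R') \in \omega$ with $y \notin \Lambda$. Inserting one vertex into a finite germ-grain configuration changes $N_{cc}$ by $1 - \kappa$, where $\kappa$ is the number of pre-existing connected components whose grain meets $B(y,R')$. Writing $k$ and $k'$ for this count in $\omega_\Delta$ and in $\omega_{\Delta\setminus\Lambda}$ respectively, we obtain $f_\omega(\Delta') - f_\omega(\Delta) = k' - k$. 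Since $\omega_{\Delta\setminus\Lambda} \subseteq \omega_\Delta$, each of the $k'$ components of $\omega_{\Delta\setminus\Lambda}$ in question is contained in a unique one of the $k$ components of $\omega_\Delta$ in question, and this containment map is surjective onto the latter; so $k' \geq k$.

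For (ii), split $N_{cc}(\omega_\Delta) = c_\Delta + d_\Delta$, where $c_\Delta$ counts the $\omega_\Delta$-components that contain at least one ball $B(x,R)$ with $x \in \Lambda$. A $d_\Delta$-component uses no $\Lambda$-ball, so it survives as a single component of $\omega_{\Delta\setminus\Lambda}$; a $c_\Delta$-cluster $C$ splits, upon removal of its $\Lambda$-balls, into some number $e_\Delta(C) \geq 0$ of $\omega_{\Delta\setminus\Lambda}$-components. Hence $N_{cc}(\omega_{\Delta\setminus\Lambda}) = d_\Delta + \sum_C e_\Delta(C)$, which gives
\[
f_\omega(\Delta) \;=\; c_\Delta - \sum_C e_\Delta(C) \;\leq\; c_\Delta \;\leq\; n,
\]
the last bound because every $c_\Delta$-cluster contains at least one of the $n$ points of $\omega_\Lambda$. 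The main difficulty is purely combinatorial and is precisely what (i) takes care of: the graph $\mathcal{G}(\omega)$ may well have infinite connected components, so $N_{cc}(\omega)$ is not directly defined, but the finite-increment argument keeps everything in finite configurations while still forcing the sequence $f_\omega(\Delta_k)$ to stabilize.
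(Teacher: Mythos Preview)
Your claim (i), that $f_\omega$ is non-decreasing under inclusion, is false; this breaks the whole argument. The surjectivity of your containment map fails: a component $C$ of $\omega_\Delta$ may meet $B(y,R')$ \emph{only through balls centred in $\Lambda$}, and after deleting the $\Lambda$-balls none of the surviving sub-components of $C$ need touch $B(y,R')$ at all. Concretely, in $\R$ take $\Lambda=\Delta=[0,1]$, $\omega=\delta_{(0.5,\,3)}+\delta_{(3,\,0.1)}$, and $\Delta'=[0,4]$. Then $\omega_\Delta=\delta_{(0.5,3)}$, $\omega_{\Delta\setminus\Lambda}=\emptyset$, so $f_\omega(\Delta)=1-0=1$; whereas $\omega_{\Delta'}=\omega$ is a single component and $\omega_{\Delta'\setminus\Lambda}=\delta_{(3,0.1)}$ is also a single component, so $f_\omega(\Delta')=1-1=0<f_\omega(\Delta)$. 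In your notation, when adding $(y,R')=(3,0.1)$ one has $k=1$ (the ball $B(0.5,3)$ meets $B(3,0.1)$) but $k'=0$ (there are no balls at all in $\omega_{\Delta\setminus\Lambda}$), so $k'-k<0$.

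The paper does not try to prove monotonicity. It shows instead that the integer sequence is \emph{eventually constant}: since $\omega_\Lambda$ is finite there are only finitely many $\Lambda$-components of $L(\omega_{\Lambda^c})$ (connected components touching $L(\omega_\Lambda)$), and only finitely many balls of $\omega$ that touch $L(\omega_\Lambda)$ directly; once $\Delta_n$ is large enough to ``see'' all of these, adding any further ball $B(x,R)$ with $(x,R)\in\omega_{\Delta_n^c}$ can neither merge two distinct $\Lambda$-components nor create a new connection to $L(\omega_\Lambda)$, so $D_n(X)=0$ and $c_n$ freezes. Your bound (ii) is correct and matches the paper's Proposition~\ref{borneimportante}, but by itself it cannot replace the missing monotonicity.
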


\begin{proof}

For a given $\omega \in \Ocb$ we are interested in the quantity 
$c_n = N_{cc}(\omega_{\Delta_n}) - N_{cc}(\omega_{\Delta_n \setminus \Lambda}) $, 
where $(\Delta_n)$ is a given increasing sequence converging to $\R^d$. Since the quantity $c_n$ has integer values, the sequence $(c_n)$ converges if and only if it is constant for $n$ large enough. 
 For a subset $\Lambda\subset \R^d$, a connected component of $L(\omega_{\Lambda^c})$ is called a $\Lambda$-component of $\omega$ if it is connected to $L(\omega_{\Lambda})$.
For any $n\ge 1$, any $X=(x,R)\in \omega_{\Delta_n^c}$ let us introduce the quantity
$$
D_n( X) 
=
N_{cc}(\omega_{\Delta_n} + \delta_X) - N_{cc}(\omega_{\Delta_n \setminus \Lambda} + \delta_X) 
- [ N_{cc}(\omega_{\Delta_n} ) - N_{cc}(\omega_{\Delta_n \setminus \Lambda})  ],
$$
which gives the variation of the number of connected components when the ball $B(x,R)$ is added. It is not difficult to see that $D_n(X)$ may be not zero only if one of the two following situations occurs

\begin{itemize}
\item 
$B(x,R)$ is connected to at least two $\Lambda$-components of $\omega_{\Delta_n}$,

\item
$B(x,R)$ intersects one ball $B$ of $L(\omega_{\Lambda})$ without intersecting any $\Lambda$-component of $\omega_{\Delta_n}$ connected to $B$ (this case happens in particular when $B$ does not intersects any $\Lambda$-component of $\omega_{\Delta_n}$).
\end{itemize}

%\begin{figure}[h]
%\begin{center}
%\includegraphics[width=5cm]{figure100.png}
%\includegraphics[width=5cm]{figure101.png}
%\end{center}
%\end{figure}

Show that there exists $N\ge 1$, which may depend on $\omega$, such that none of the two situations occurs for any $X\in\omega_{\Delta_N^c}$. It ensures that the sequence $(c_n)$ is constant for $n\ge N$. 
Since the number of $\Lambda$-components is finite we can choose $N$ large enough such that the number of $\Lambda$-components in $L(\omega)$ is equal to the number of $\Lambda$-components in $L(\omega_{\Delta_N})$. 
In other words the $\Lambda$-components in $L(\omega)$ are identifiable in $\Delta_N$. 
Now it remains the problem that a ball outside $\Delta_n$ may be connected to $L(\omega_{\Lambda})$ without intersecting any $\Lambda$-component of $\omega_{\Delta_n}$. 
But the number of such balls is finite. So for $N$ large enough this situation does not occur.

\end{proof}

Let us point out that, even if $N_{cc}^{\Lambda}(\omega)$ depends only on $\omega_{\Delta_N}$, the determination of $N$ involves a global knowledge of the configuration $\omega$.
This long range dependence is the major problem in the present paper.

The local number of connected components satisfies the following additivity properties which is a direct consequence of \eqref{definitionlimite}. For any couple of bounded sets $\Lambda \subseteq \Lambda'$ in $\R^d$, there exists a function $\phi_{\Lambda, \Lambda'}$ such that, for all $\omega$ in $\Ocb$

\begin{equation}\label{compatibility}
N_{cc}^{\Lambda'}(\omega)= N_{cc}^{\Lambda}(\omega) + \phi_{\Lambda, \Lambda'}(\omega_{\Lambda^c} ).
\end{equation}

The function $\phi_{\Lambda, \Lambda'}$ depends only on the configurations outside $\Lambda$. It is a crucial point for the compatibility of the Gibbs Kernels. Let us finish this section in giving useful bounds for $N_{cc}^\Lambda$.
\begin{propo}\label{borneimportante}
For any configuration $\omega$ and any bounded set $\Lambda$

\begin{equation}\label{majoimportante}
N_{cc}^{\Lambda}(\omega) \le \omega(\Lambda).
\end{equation}
Moreover, for any $R_0>0$ there exists $K\in\R$ such that for any configuration satisfying for all points $(x,R)\in\omega_ \Lambda$, $R\le R_0$ then

\begin{equation}\label{minoimportante}
N_{cc}^{\Lambda}(\omega)  \ge
K -\omega( \Delta_{R_0} \setminus \Lambda),
\end{equation}
where $\Delta_{R_0}= \Lambda\oplus B(0,R_0+2)$.
 
\end{propo}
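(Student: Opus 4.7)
The upper bound \eqref{majoimportante} follows from the elementary observation that inserting one ball into a finite configuration changes the number of connected components by at most $+1$: the new ball either forms its own component, or it merges with at least one existing component and the count strictly decreases. Applying this successively to the $\omega(\Lambda)$ points of $\omega_\Lambda$ gives $N_{cc}(\omega_{\Delta_n})-N_{cc}(\omega_{\Delta_n\setminus\Lambda})\le\omega(\Lambda)$ for every $n$, and passing to the limit (which is eventually constant by \eqref{definitionlimite}) yields $N_{cc}^\Lambda(\omega)\le\omega(\Lambda)$.

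For \eqref{minoimportante}, I would take $n$ large enough that the sequence in \eqref{definitionlimite} has stabilized and write $N_{cc}^\Lambda(\omega)=N'-M$, where $N'$ is the number of connected components of $L(\omega_{\Delta_n})$ containing at least one ball from $\omega_\Lambda$, and $M$ is the number of connected components of $L(\omega_{\Delta_n\setminus\Lambda})$ that intersect $L(\omega_\Lambda)$ (the ``old'' components swallowed when $\omega_\Lambda$ is put back in). Since $N'\ge 0$, it is enough to produce a bound of the form $M\le\omega(\Delta_{R_0}\setminus\Lambda)+K^*$ for some constant $K^*=K^*(\Lambda,R_0,d)$; then \eqref{minoimportante} holds with $K=-K^*$.

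I would then split $M=|\mathcal{C}_A|+|\mathcal{C}_B|$, where $\mathcal{C}_A$ gathers those merged components owning at least one centre in $\Delta_{R_0}\setminus\Lambda$ and $\mathcal{C}_B$ those whose centres all lie outside $\Delta_{R_0}$. The bound $|\mathcal{C}_A|\le\omega(\Delta_{R_0}\setminus\Lambda)$ is immediate, since distinct components cannot share a centre. The real work is to control $|\mathcal{C}_B|$. Because all radii in $\omega_\Lambda$ are at most $R_0$, $L(\omega_\Lambda)\subset X:=\Lambda\oplus B(0,R_0)$, and each $C\in\mathcal{C}_B$ must contain some ball $B(y_C,R_C')$ with $y_C\notin\Delta_{R_0}$ and $B(y_C,R_C')\cap X\ne\emptyset$; combined with $d(y_C,X)>2$ this forces $R_C'>2$.

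The main obstacle---and, I believe, the reason for the ``$+2$'' buffer in $\Delta_{R_0}=\Lambda\oplus B(0,R_0+2)$---is bounding $|\mathcal{C}_B|$ by a geometric packing argument uniform in $\omega$. Pick $p_C\in B(y_C,R_C')\cap X$ and set $V_C:=B(p_C,1)\cap B(y_C,R_C')$. A direct two-ball intersection computation, using $R_C'>2$ and $p_C\in B(y_C,R_C')$, gives a uniform lower bound $\mathrm{vol}(V_C)\ge c_d>0$ with $c_d$ depending only on the dimension. Since $p_C\in X$, all $V_C$ lie in $\Lambda\oplus B(0,R_0+1)$, and they are pairwise disjoint because the components $C\in\mathcal{C}_B$ are. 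Summing volumes yields $|\mathcal{C}_B|\cdot c_d\le\mathrm{vol}(\Lambda\oplus B(0,R_0+1))$, which supplies the required constant $K^*$ and completes the proof.
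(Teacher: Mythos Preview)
Your proof is correct and follows essentially the same route as the paper. For \eqref{majoimportante} both arguments are identical. For \eqref{minoimportante} the paper also splits the merged components into those with a centre in $\Delta_{R_0}\setminus\Lambda$ (bounded by $\omega(\Delta_{R_0}\setminus\Lambda)$) and those entirely outside, controlling the latter by a volume packing argument in the buffer region; the only cosmetic difference is that the paper shifts the contact point one unit toward the centre to fit an entire unit ball inside $B(y_C,R_C')\cap\Delta_{R_0}$ (getting the constant $v_d$), whereas you keep $p_C$ on the boundary and use the cap $B(p_C,1)\cap B(y_C,R_C')$ (getting a smaller but still positive $c_d$).
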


\begin{proof}
For any subset $\Delta$ the difference 
$N_{cc}(\omega_{\Delta}) - N_{cc}(\omega_{\Delta \setminus \Lambda} )$ is obviously smaller than $\omega(\Lambda)$ and so its limit when $\Delta$ tends to $\R^d$ as well. The first inequality \eqref{majoimportante} follows.

To get the lower bound for 
$N_{cc}^{\Lambda}(\omega)$, 
we first note that the worst case occurs when $L(\omega_{\Lambda})$ has one connected component which intersects a lot of connected components of $L(\omega_{\Lambda^c})$. So let us control this number of connected components. We consider $(x,R) \in \omega_{\Delta_{R_0}^c}$ such that $B(x,R)$ intersects a connected component of $L(\omega_{\Lambda})$. Since all balls of $L(\omega_{\Lambda})$ have a radius smaller than $R_0$ we have
$$
|B(x,R) \cap \Delta_{R_0} | \geq v_d,
$$
where $v_d$ is the volume of the unit ball in dimension $d$. So the number of connected components of 
$L(\omega_{\Delta_{R_0}^c})$
which are connected to $L(\omega_{\Lambda})$ is bounded from above by $k=\frac{|\Delta_{R_0}|}{v_d}$. 
Taking into consideration the balls in $\omega_{\Delta_{R_0} \setminus \Lambda}$ we have
$$ N_{cc}^{\Lambda}(\omega) 
\geq
1 - k -\omega(\Delta_{R_0} \setminus \Lambda)$$
and \eqref{minoimportante} follows.
\end{proof}

\subsection{Continuum Random Cluster Model}

The continuum random cluster model is defined via standard DLR formalism which requires that the probability measure satisfies equilibrium equations based on Gibbs kernels (see equations \eqref{DLR}). 
Before giving these equations we need to assume that these kernels are well-defined which is the case if for any bounded set $\Lambda$ and any configuration $\omega$ the partition function 
$$ Z_{\Lambda}(\omega_{\Lambda^c})
:=
\int_{\Ocb} q^{N_{cc}^{\Lambda}( \omega'_{\Lambda} + \omega_{\Lambda^c} ) }\pi^{z,Q}_{\Lambda}(d\omega')$$

is non degenerate which means that  $0<Z_{\Lambda}(\omega_{\Lambda^c})<+\infty$. As usual, for any configuration $\omega$, $Z_{\Lambda}(\omega_{\Lambda^c})\ge \pi^{z,Q}_{\Lambda}(0)=e^{-z|\Lambda|}>0$. For the other bound, the following assumption is required

$$\text{ } \qquad \qquad  \qquad q\ge 1  \text{ or the probability measure } Q \text{ has a compact support}. \qquad \qquad  \qquad {(\bf A)}$$
  
\begin{lemme} Under the assumption {\bf (A)}, for any configuration $\omega$ and any bounded set $\Lambda$ the partition function $ Z_{\Lambda}(\omega_{\Lambda^c})$ is finite.
\end{lemme}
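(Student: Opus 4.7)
The plan is to split into the two alternatives offered by assumption \textbf{(A)} and apply, respectively, the upper and lower bounds for $N_{cc}^{\Lambda}$ provided by Proposition~\ref{borneimportante}.

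\textbf{Case 1: $q\ge 1$.} Here $t\mapsto q^t$ is non-decreasing, so I would plug in the crude upper bound \eqref{majoimportante}. For $\pi^{z,Q}_\Lambda$-a.e.\ $\omega'$, since $\omega_{\Lambda^c}$ puts no points in $\Lambda$,
\[
N_{cc}^{\Lambda}(\omega'_{\Lambda}+\omega_{\Lambda^c})\le (\omega'_{\Lambda}+\omega_{\Lambda^c})(\Lambda)=\omega'(\Lambda).
\]
Hence $Z_{\Lambda}(\omega_{\Lambda^c})\le \int q^{\omega'(\Lambda)}\,\pi^{z,Q}_{\Lambda}(d\omega')$, and the right-hand side is the probability generating function of a Poisson$(z|\Lambda|)$ variable evaluated at $q$, which equals $\exp\bigl(z|\Lambda|(q-1)\bigr)<+\infty$.

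\textbf{Case 2: $q<1$ and $Q$ has compact support.} Let $R_0>0$ with $\mathrm{supp}\,Q\subset[0,R_0]$. Under $\pi^{z,Q}_\Lambda$, every point $(x,R)\in\omega'_\Lambda$ satisfies $R\le R_0$ almost surely, so the combined configuration $\omega'_\Lambda+\omega_{\Lambda^c}$ has all radii in $\Lambda$ bounded by $R_0$. Thus the lower bound \eqref{minoimportante} applies, giving
\[
N_{cc}^{\Lambda}(\omega'_\Lambda+\omega_{\Lambda^c})\ge K-(\omega'_\Lambda+\omega_{\Lambda^c})(\Delta_{R_0}\setminus\Lambda)= K-\omega_{\Lambda^c}(\Delta_{R_0}\setminus\Lambda).
\]
The quantity $M:=\omega_{\Lambda^c}(\Delta_{R_0}\setminus\Lambda)$ is a finite constant depending only on $\omega_{\Lambda^c}$ and $\Lambda$, since $\Delta_{R_0}\setminus\Lambda$ is bounded. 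Because $q<1$, the map $t\mapsto q^t$ is non-increasing, so $q^{N_{cc}^{\Lambda}(\omega'_\Lambda+\omega_{\Lambda^c})}\le q^{K-M}$ for $\pi^{z,Q}_\Lambda$-a.e.\ $\omega'$. Integrating yields $Z_{\Lambda}(\omega_{\Lambda^c})\le q^{K-M}<+\infty$.

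There is no real obstacle here: the argument is essentially a careful bookkeeping of which of the two bounds in Proposition~\ref{borneimportante} to use depending on whether $q^{\cdot}$ is increasing or decreasing, combined with the observation that in the decreasing regime the compactness-of-support assumption is precisely what allows the hypothesis of \eqref{minoimportante} to be satisfied by the integration variable $\omega'_\Lambda$. The lower bound \eqref{majoimportante} in Case~1 is even explicit enough to yield the quantitative estimate $Z_\Lambda(\omega_{\Lambda^c})\le e^{z|\Lambda|(q-1)}$, which together with the earlier lower bound $Z_\Lambda(\omega_{\Lambda^c})\ge e^{-z|\Lambda|}$ shows the kernels are non-degenerate.
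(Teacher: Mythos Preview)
Your proof is correct and follows exactly the same strategy as the paper's: split according to whether $q\ge 1$ or $q<1$ and apply, respectively, the upper bound \eqref{majoimportante} or the lower bound \eqref{minoimportante} from Proposition~\ref{borneimportante}. Your version is slightly more explicit (evaluating the Poisson generating function and spelling out why $(\omega'_\Lambda+\omega_{\Lambda^c})(\Delta_{R_0}\setminus\Lambda)$ depends only on $\omega_{\Lambda^c}$), but the argument is the same.
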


\begin{proof}
In the case $q\ge 1$, thanks to \eqref{majoimportante}
\begin{eqnarray*}
Z_{\Lambda}(\omega_{\Lambda^c}) & \le & \int_{\Ocb} q^{ \omega'(\Lambda)}\pi^{z,Q}_{\Lambda}(d\omega')<+\infty.
\end{eqnarray*}
If $q< 1$ and $Q$ has a compact support, there exists $R_0$ such that $Q([0,R_0])=1$ and thanks to \eqref{minoimportante}

\begin{eqnarray*}
Z_{\Lambda}(\omega_{\Lambda^c}) & \le & q^{K-\omega(\Delta_{R_0} \setminus \Lambda)}<+\infty.
\end{eqnarray*}

\end{proof}
 
We are now in position to give the definition of a continuum random cluster model.
\begin{defi}
Under the assumption {\bf(A)}, a probability measure $P$ on $(\Ocb, \Fcb)$ is called a continuum random cluster model for parameters $z$, $Q$ and $q$ (CRCM($z,Q,q$)) if for all bounded $\Lambda \subseteq \R^d$ and all bounded measurable functions $f$ we have 

\begin{equation}\label{DLR}
\int_{\Ocb} f(\omega) P(d\omega)
=
\int_{\Ocb} \int_{\Ocb} f( \omega'_{\Lambda} + \omega_{\Lambda^c} )
\frac{1}{Z_{\Lambda}(\omega_{\Lambda^c})}q^{ N_{cc}^{\Lambda}( \omega'_{\Lambda} + \omega_{\Lambda^c} )}
\pi^{z,Q}_{\Lambda}(d\omega')
P(d\omega).
\end{equation}
Equivalently, for $P$-almost every $\omega$ the conditional law of $P$ given $\omega_{\Lambda^c}$ is absolutely continuous with respect to $\pi^{z,Q}_\Lambda$ with  density $q^{N_{cc}^{\Lambda}(.+\omega_{\Lambda^c}) }/ Z_{\Lambda}(\omega_{\Lambda^c}).$
\end{defi}
These equations, for all $\Lambda$, are called DLR (Dobrushin, Lanford, Ruelle) equations. The existence of such Gibbs measures is the main question of the present paper. The non uniqueness is also considered.

\section{Results} \label{section.resutats}

Our first result theorem ensures the existence of at least one $CRCM(z,Q,q)$ for the larger class of parameters $(z,Q,q)$ as possible.

\begin{theo}\label{bigtheo} ${}$
\begin{itemize}
\item
If $Q$ has a bounded support, i.e there exits $R_0>0$ such that $Q([0,R_0])=1$, then for all $z>0$ and $q>0$ there exists at least one stationary CRCM($z,Q,q$).

\item
If $\int R^d Q(dR)$ is finite, then for all $z>0$ and $q\ge 1$    there exists at least one stationary CRCM($z,Q,q$).

\end{itemize}
\end{theo}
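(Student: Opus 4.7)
The plan is to follow the classical specific-entropy strategy used in \cite{dereudredrouilhetgeorgii, gh} for infinite-volume existence of marked Gibbs point processes. First I would fix an increasing sequence of cubes $\Lambda_n\uparrow\R^d$ and define finite-volume Gibbs measures $P_n$ on $\Lambda_n$, using the Gibbs kernel with (say) empty boundary condition outside $\Lambda_n$; assumption \textbf{(A)} together with the preceding lemma ensures that each $P_n$ is well defined.

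Next I would form the stationarized empirical field $\bar P_n$ by averaging the translates of $P_n$ over $\Lambda_n\cap\Z^d$. Using the upper bound \eqref{majoimportante} when $q\ge 1$ (which makes $q^{N_{cc}^\Lambda}\le q^{\omega(\Lambda)}$ and yields a Poisson-type entropy bound), and using both bounds \eqref{majoimportante} and \eqref{minoimportante} in the bounded-radii case, the specific entropy of $\bar P_n$ with respect to the reference Poisson measure $\pi^{z,Q}$ stays bounded uniformly in $n$. The Georgii--Zessin compactness of level sets of specific entropy then yields a stationary accumulation point $P$ in the topology of local convergence, along a subsequence still denoted by $n$.

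It remains to show that $P$ satisfies the DLR equation \eqref{DLR} for every bounded $\Lambda$ and every bounded local test function $f$. The main obstacle, as the authors emphasise after the existence of $N_{cc}^\Lambda$, is that $N_{cc}^\Lambda(\omega)$ is \emph{not} a local function of $\omega_{\Lambda^c}$: a single ball arbitrarily far from $\Lambda$ may change it by merging or breaking a chain linked to $\Lambda$. My plan is to approximate $N_{cc}^\Lambda$ by the finite truncation $N_{cc}(\omega_{\Delta}) - N_{cc}(\omega_{\Delta\setminus\Lambda})$ for a large intermediate box $\Delta$, and to control the remainder with the integrability hypothesis. In the bounded-support case the truncation is \emph{exact} once $\Delta\supseteq\Lambda\oplus B(0,R_0+2)$, so the Gibbs kernel is genuinely local and passage to the DLR limit is immediate. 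In the $d$-moment case, the expected number of points $(x,R)\in\omega_{\Delta^c}$ whose ball still reaches $\Lambda$ is controlled by the tail of $\int R^d Q(dR)$ outside $\Delta$, and vanishes as $\Delta\uparrow\R^d$ uniformly in the entropy-bounded sequence $\bar P_n$.

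The hardest step will be this quasi-locality argument in the unbounded-radii case: one must show that $\omega\mapsto Z_\Lambda(\omega_{\Lambda^c})$ and $\omega\mapsto \int f\cdot q^{N_{cc}^\Lambda}/Z_\Lambda\,d\pi^{z,Q}_\Lambda$ depend continuously, in the local convergence topology, on the far boundary configuration, uniformly enough to exchange the limit $n\to\infty$ with the DLR integration. The additivity relation \eqref{compatibility} will be the algebraic backbone of this control, while the $d$-moment hypothesis on $Q$ combined with the entropy bound on $(\bar P_n)$ provides the quantitative tail estimates needed to close the argument.
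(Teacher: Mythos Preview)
Your overall architecture---entropy-bounded stationarised empirical fields, compactness of level sets, then passage to the DLR limit via a quasi-locality estimate---is exactly the route the paper takes. However, your quasi-locality step contains a genuine gap, and your bounded-radii shortcut is actually false.

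The error is the claim that in the bounded-support case ``the truncation is exact once $\Delta\supseteq\Lambda\oplus B(0,R_0+2)$, so the Gibbs kernel is genuinely local''. This is not true. Even when every radius is at most $R_0$, the functional $N_{cc}^\Lambda(\omega)$ is \emph{not} determined by $\omega_\Delta$ for any fixed $\Delta$. The reason is that two distinct $\Lambda$-components (connected components of $L(\omega_{\Lambda^c})$ touching $L(\omega_\Lambda)$) can extend arbitrarily far via chains of small balls and then merge outside $\Delta$; whether or not they merge changes $N_{cc}^\Lambda$ by one. The tail control you propose in the $d$-moment case---bounding the number of balls centred in $\Delta^c$ whose disc still reaches $\Lambda$---handles only the \emph{direct} long-range interaction and says nothing about this merging-at-infinity phenomenon. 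So in both regimes of the theorem your argument, as written, does not close.

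The paper resolves this with an additional a priori step that you are missing entirely: before attempting the DLR equations, one proves that the accumulation point $\bar P$ almost surely has \emph{at most one infinite connected component}. This is obtained from a local-modification property of $\bar P$ (itself derived from the DLR equations satisfied by the approximants $\mu_n^\Lambda$, together with the bounds \eqref{majoimportante}--\eqref{minoimportante}) via a Burton--Keane argument. Once uniqueness of the infinite cluster is known, the paper introduces two families of localising events: $A_{i,j}$ (no ball centred outside $\Delta_j$ meets $\Delta_i$), which is exactly what your $d$-moment estimate controls, \emph{and} $W_{i,j}$ (at most one connected component of $L(\omega_{\Delta_j\setminus\Lambda})$ meets both $\Lambda_{R_0}$ and $\Delta_i^c$), whose probability tends to one precisely because of the a priori uniqueness of the infinite cluster. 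On $A_{i,j}\cap W_{i,j}$ the functional $N_{cc}^\Lambda$ really does localise to $\Delta_j$, and the limit can be exchanged. Without the $W_{i,j}$ half of this argument---and hence without the uniqueness-of-the-infinite-cluster step---the proof cannot be completed.
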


The proof of this theorem is based on the compactness of the level set of the specific entropy (see \ref{compact}). 
This tightness tool allows to build a limit point of a sequence of stationary empirical field coming from the finite volume Gibbs measures. 
Then the main difficulty is to prove that this limit point satisfies the DLR equations. 
This strategy has already been successfully applied for proving the existence of several Gibbs models  \cite{david,dereudredrouilhetgeorgii,gh}. In the present context of continuum random cluster model, the strong non-locality of the interaction is our major problem. 
Indeed the radii are not bounded which produce a long range dependency. 
Moreover the contribution of each ball in the interaction can be long range if the ball is "pivotal" in the sense that it plays a crucial role in the determination of $N_{cc}^{\Lambda}(\omega)$.  The size of the connected components has also an influence on the range of the interaction. In particular, for proving the DLR equations, we need to prove that the limit point has, a priori, at most one infinite connected component. The proof of this theorem is given in Section \ref{section.preuve.theo1}.

In the extreme setting of non-integrable radii  (i.e. $\int R^d Q(dR)= + \infty$). First we note that the existence of a $CRCM(z,Q,q)$ is obvious since the Poisson point process $\pi^{z,Q}$ solves the DLR equations \eqref{DLR}. 

\begin{propo} If $ \int R^d Q(dR)= + \infty$ then the Poisson process $\pi^{z,Q}$ is a $CRCM(z,Q,q)$.

\end{propo}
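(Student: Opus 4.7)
The plan is to verify the DLR equation \eqref{DLR} with $P = \pi^{z,Q}$. Because the conditional law of $\pi^{z,Q}$ given $\omega_{\Lambda^c}$ is $\pi^{z,Q}_{\Lambda}$, DLR reduces to showing that, for $\pi^{z,Q}$-a.e.\ $\omega_{\Lambda^c}$, the density $q^{N_{cc}^{\Lambda}(\omega'_{\Lambda} + \omega_{\Lambda^c})}/Z_{\Lambda}(\omega_{\Lambda^c})$ equals $1$ for $\pi^{z,Q}_{\Lambda}$-a.e.\ $\omega'_{\Lambda}$; equivalently, $\omega'_{\Lambda} \mapsto N_{cc}^{\Lambda}(\omega'_{\Lambda} + \omega_{\Lambda^c})$ is $\pi^{z,Q}_{\Lambda}$-a.s.\ constant. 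I would prove the stronger statement that this map is identically zero on a full-measure event of $\omega_{\Lambda^c}$.

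The argument rests on two almost-sure structural facts for $\omega_{\Lambda^c}$. First, $L(\omega_{\Lambda^c}) = \R^d$: by Campbell's formula, the Poisson mean number of balls of $\omega_{\Lambda^c}$ covering any fixed $x_0$ equals $z\int |B(x_0,R')\cap\Lambda^c|\,Q(dR')$, which is infinite under the non-integrability hypothesis (after subtracting the finite correction $z|\Lambda|$ to pass from $\Lambda^c$ to $\R^d$). Hence any fixed point is a.s.\ covered, and Fubini together with the openness of $L(\omega_{\Lambda^c})^c$ gives $L(\omega_{\Lambda^c}) = \R^d$ a.s. Second, $\mathcal{G}(\omega_{\Lambda^c})$ is a.s.\ connected: for any fixed pair of points $v_1 = (x_1,R_1)$, $v_2 = (x_2,R_2)$ in $\Scb$ with $x_1,x_2 \in \Lambda^c$, the expected number of enclosing balls $(y,S) \in \omega_{\Lambda^c}$ satisfying $B(x_1,R_1) \cup B(x_2,R_2) \subset B(y,S)$ reduces, by polar coordinates, to a constant times $\int R^d Q(dR) = \infty$; hence such an enclosing ball exists a.s., forcing $v_1$ and $v_2$ to lie in a common graph component. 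A Slivnyak--Mecke computation on the sum $\sum_{v_1 \ne v_2 \in \omega_{\Lambda^c}} \1\{v_1 \not\sim v_2\}$ then upgrades this pointwise statement to the a.s.\ simultaneous connectedness of all pairs. This second step is the main technical obstacle.

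For $\omega_{\Lambda^c}$ in the resulting full-measure event and any finite $\omega'_{\Lambda}$, tracking how the number of graph components changes when the vertices of $\omega'_{\Lambda}$ are inserted into $\mathcal{G}(\omega_{\Delta_n\setminus\Lambda})$ yields
\[
N_{cc}(\omega_{\Delta_n}) - N_{cc}(\omega_{\Delta_n\setminus\Lambda}) = m_n - k_n,
\]
where $m_n$ is the number of components of $\mathcal{G}(\omega_{\Delta_n})$ meeting $\omega'_{\Lambda}$ and $k_n$ is the number of components of $\mathcal{G}(\omega_{\Delta_n\setminus\Lambda})$ connected to $L(\omega'_{\Lambda})$. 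Because $L(\omega_{\Lambda^c}) = \R^d$, each ball of $\omega'_{\Lambda}$ overlaps some ball of $\omega_{\Lambda^c}$, and because $\mathcal{G}(\omega_{\Lambda^c})$ is connected, the stabilization argument used to prove existence of the limit \eqref{definitionlimite} shows that for $n$ large these $k_n$ components collapse to a single one. Hence $m_n = k_n = \1\{\omega'_{\Lambda} \neq \emptyset\}$ eventually, so $N_{cc}^{\Lambda}(\omega'_{\Lambda} + \omega_{\Lambda^c}) = 0$; therefore $Z_{\Lambda}(\omega_{\Lambda^c}) = 1$ and \eqref{DLR} collapses to the tautological Poisson conditioning identity, which $\pi^{z,Q}$ satisfies.
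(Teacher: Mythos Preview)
Your proof is correct and follows the same overall route as the paper: show that for $\pi^{z,Q}$-a.e.\ outside configuration $\omega_{\Lambda^c}$ the map $\omega'_\Lambda\mapsto N_{cc}^{\Lambda}(\omega'_\Lambda+\omega_{\Lambda^c})$ vanishes identically, so that the Gibbs density is $1$ and the DLR equation reduces to the Poisson conditioning identity. The paper's own proof is very terse: it cites the full-coverage fact $L(\omega_{\Lambda^c})=\R^d$ and immediately concludes $N_{cc}^{\Lambda}\equiv 0$.

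Your version is more careful on one point the paper glosses over. Since the paper defines connected components via the graph $\mathcal G(\omega)$ rather than topologically (and even gives an example where the two notions differ), full coverage $L(\omega_{\Lambda^c})=\R^d$ alone does not \emph{formally} force the graph $\mathcal G(\omega_{\Lambda^c})$ to be connected, and hence does not by itself exclude $N_{cc}^{\Lambda}<0$ (a ball in $\omega'_\Lambda$ could merge two distinct graph components of $\omega_{\Lambda^c}$). Your Slivnyak--Mecke argument --- showing that for any two deterministic marked points there is a.s.\ a Poisson ball in $\omega_{\Lambda^c}$ intersecting both, and then integrating over pairs --- closes this gap cleanly. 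The decomposition $N_{cc}(\omega_{\Delta_n})-N_{cc}(\omega_{\Delta_n\setminus\Lambda})=m_n-k_n$ and the stabilisation $m_n=k_n=\1\{\omega'_\Lambda\neq\emptyset\}$ for large $n$ are correct and make the final step transparent. In short: same approach as the paper, but your write-up justifies the step the paper takes for granted.
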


\begin{proof}

It is well known that, for any bounded set $\Lambda$ and $\pi^{z,Q}$-almost all $\omega$, the set 
$\cup_{(x,R)\in \omega_{\Lambda^c}} B(x,R)$
covers the full space $\R^d$ \cite{chiu2013}. Therefore the function $\omega'_\Lambda \mapsto 
N_{cc}^\Lambda (\omega'_\Lambda + \omega_{\Lambda^c})$ is identically null for $\pi^{z,Q}$-almost every outside configuration $\omega_{\Lambda^c}$. The DLR equations follows easily. 
\end{proof}

Our second theorem ensures the existence of another $CRCM(z,Q,q)$ different from  $\pi^{z,Q}$ when $q$ is an integer and $z$ is small enough. It is a non uniqueness result which proves that the simplex of $CRCM(z,Q,q)$ is not reduced to a singleton.

\begin{theo}\label{bigtheo2}
If $\int_{\R^+} R^d Q(dR)= + \infty$ and if $q$ is an integer larger than 2,
there exists $z_0>0$ such that, for all $z<z_0$, there exists a stationary CRCM($z,Q,q$) different from $\pi^{z,Q}$.
\end{theo}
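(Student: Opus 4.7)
My plan is to construct a second $CRCM(z,Q,q)$ by approximation: build a CRCM $P_{R_0}$ with radii truncated at $R_0$ via Theorem~\ref{bigtheo}, pass to the limit $R_0\to\infty$, and show that the limit, while still satisfying the DLR equations for $Q$, retains enough "truncation effect" to differ from $\pi^{z,Q}$ when $z$ is small. The Fortuin-Kasteleyn representation through the $q$-color Widom-Rowlinson (WR) model is used to make both the limit and the comparison tractable.

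For every $R_0>0$ set $Q_{R_0}:=Q\,\1_{[0,R_0]}/Q([0,R_0])$. The first item of Theorem~\ref{bigtheo} provides a stationary $CRCM(z,Q_{R_0},q)$, which I denote $P_{R_0}$. By the classical FK identity—summing over the $q^{N_{cc}(\omega)}$ admissible monochromatic colorings of the connected components of $\omega$—$P_{R_0}$ is the color-blind projection of a stationary Gibbs measure $P^{WR}_{R_0}$ for the $q$-colored WR model with intensity $z$ per color and radii $Q_{R_0}$ (colored configurations $\bar\omega=(\omega^1,\ldots,\omega^q)$ with the hard-core constraint that no ball of $\omega^c$ intersects any ball of $\omega^{c'}$ for $c\ne c'$). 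The WR representation has the advantage that its interaction is the purely local non-overlap constraint, so the long range dependency on $N_{cc}^\Lambda$ that was the main difficulty of Theorem~\ref{bigtheo} is avoided in the colored picture.

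I would then extract a weak accumulation point $P_\infty$ of $(P_{R_0})_{R_0>0}$ as $R_0\to\infty$, using the compactness of specific entropy level sets of the stationary empirical fields, and pass the DLR equations to the limit to conclude that $P_\infty$ is a stationary $CRCM(z,Q,q)$. The passage is performed in the colored representation and uses the same quasi-locality ideas as in Theorem~\ref{bigtheo}, considerably simplified by the locality of the WR interaction.

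The decisive step is to show $P_\infty\neq\pi^{z,Q}$ for $z<z_0$. A natural discriminating quantity is the mean density of points: under $\pi^{z,Q}$ it is exactly $z$, while under $P_{R_0}$ the Georgii-Nguyen-Zessin formula applied color by color in $P^{WR}_{R_0}$ gives
\[
\rho(R_0)\;=\;\sum_{c=1}^q z\int_{0}^{R_0} P^{WR}_{R_0}\!\bigl(\text{no ball of color }c'\ne c\text{ overlaps }B(0,R)\bigr)\,Q_{R_0}(dR).
\]
A Peierls/Pirogov-Sinai argument on $P^{WR}_{R_0}$, using that the $q^{N_{cc}}$ bias in the CRCM description penalises large balls (which merge many clusters) and keeps each color dilute when $z$ is small, should yield $\rho(R_0)\geq(1+\eta)z$ with $\eta>0$ independent of $R_0$. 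This uniform lower bound passes to $P_\infty$, which therefore has density strictly greater than $z$, hence $P_\infty\neq\pi^{z,Q}$. The principal obstacle is precisely this uniform quantitative lower bound: as $R_0\to\infty$ a single huge ball of another color can threaten to sterilize the WR configuration around the origin, so the Peierls estimate must be made to survive the $R_0\to\infty$ limit—presumably by exploiting that a ball of radius $R$ merges many clusters and hence carries an $N_{cc}$-penalty growing with $R$.
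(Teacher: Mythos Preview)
Your proposal shares with the paper the central idea of passing through the $q$-colour Widom--Rowlinson (WR) model, but it has two genuine gaps that the paper fills by quite different means.

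\textbf{The DLR limit is not ``considerably simplified''.} You assert that the WR interaction is local, so that passing the DLR equations to the limit is easier than in Theorem~\ref{bigtheo}. For each truncation level $R_0$ this is true, but for the target model with the full $Q$ it is \emph{false}: a ball centred arbitrarily far away with a huge radius can overlap any ball in $\Lambda$, so $\1_{\A}(\tomega'_\Lambda+\tomega_{\Lambda^c})$ is not a local function of $\tomega_{\Lambda^c}$. The paper's key device here is a sequence of \emph{shield events} $W_k$: if certain boxes surrounding $\Lambda$ each contain at least two balls of different colours, then any ball from far away reaching $\Lambda$ would have to cover one such box entirely and hence overlap balls of two colours, which is forbidden. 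On $W_k\cap\A$ the WR indicator becomes $\Delta_k$-local, and $\bar\nu(W_k)\to1$ precisely because the limit is not monochromatic. This mechanism is absent from your sketch, and without it the passage to the limit of the DLR equations does not go through.

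\textbf{The discrimination from $\pi^{z,Q}$ is left incomplete, and the paper's argument is different.} You propose to compare densities and invoke a Peierls/Pirogov--Sinai bound $\rho(R_0)\ge(1+\eta)z$ uniform in $R_0$, acknowledging this as the main obstacle. The paper does not attempt any Peierls estimate. Instead it compares \emph{specific entropies}: every stationary monochromatic coloured measure has $\mathcal I^z\ge\frac{q-1}{q}z$, while an explicit combinatorial lower bound on $\tilde\pi_{\Lambda_n}(\A)$ (filling $\Lambda_n$ by small boxes each carrying a single colour) yields $\mathcal I^z(\bar\nu_n)\le\frac{q-1}{q}z-\epsilon$ for $z$ small. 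Hence the accumulation point $\bar\nu$ gives positive mass to the event $Col$ of having two colours. Conditioning on $Col$ gives a WR whose colour-blind projection is a CRCM with $L(\omega)\ne\R^d$ a.s.\ (disjoint nonempty closed sets of different colours cannot cover $\R^d$), whereas $L(\omega)=\R^d$ a.s.\ under $\pi^{z,Q}$. This is the discrimination; no uniform-in-$R_0$ density estimate is needed.

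Two smaller points: the lift from an infinite-volume CRCM $P_{R_0}$ to a WR $P^{WR}_{R_0}$ is not the ``classical FK identity'' (that identity is finite-volume); the paper avoids this by constructing the WR directly and projecting down. And the truncation step $R_0\to\infty$ is unnecessary---the paper works with the full $Q$ from the outset in the finite-volume WR.
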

 
The reason why $q$ must be  an integer comes from the FK representation we used in the proof. Indeed
we are not able to extend the proof of \ref{bigtheo} to the case $\int_{\R^+} R^d Q(dR)= + \infty$. The influence of large balls centred far away is too difficult to control and we do not succeed to prove that the limit point satisfied the DLR equations. Using the representation of the CRCM as a Widom-Rowlinson model (a model of non overlapping balls with $q$ different colors) as in \cite{cck,ghm}, the existence problem becomes simpler. Actually the DLR equations of the Widom-Rowlinson are more "local" since balls with different colors are not allowed to overlap. It produces a natural locality of the interaction. However we think that the assumption $q\in\N$ is only technical and could be relaxed by $q>1$.

Involving the parameter $z$ we believe that the assumption $z$ small enough is crucial. In our proof, it ensures that the CRCM($z,Q,q$) we build is different from $\pi^{z,Q}$. It is based on specific entropy inequalities which ensure the discrimination for $z$ small enough. From a general point of view, we conjecture that for $z$ large enough there exists an unique CRCM($z,Q,q$) which is $\pi^{z,Q}$. The uniqueness would be recovered for $z$ large enough leading to a phase transition phenomenon.

\begin{conjecture}\label{conjecture}
If $\int_{\R^+} R^d Q(dR)= + \infty$, there exists $z_1>0$ such that, for all $z>z_1$, there exists an unique stationary CRCM($z,Q,q$) which is $\pi^{z,Q}$.
\end{conjecture}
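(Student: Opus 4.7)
The plan is to show that any stationary CRCM $P$ in the non-integrable regime must coincide with the Poisson measure $\pi^{z,Q}$ by proving that $L(\omega_{\Lambda^c}) = \R^d$ holds $P$-almost surely for every bounded $\Lambda \subset \R^d$. Once this is established, the argument of the preceding proposition applies verbatim: the map $\omega'_{\Lambda} \mapsto N_{cc}^{\Lambda}(\omega'_{\Lambda} + \omega_{\Lambda^c})$ is $\pi^{z,Q}_\Lambda$-almost surely null, so the local density $q^{N_{cc}^\Lambda}/Z_\Lambda$ collapses to $1$, the DLR kernel reduces to $\pi^{z,Q}_\Lambda$ on every bounded $\Lambda$, and by Kolmogorov consistency $P = \pi^{z,Q}$. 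By stationarity this reduces to showing that $P$-a.s.\ every point of $\R^d$ is covered by infinitely many balls whose centers lie in the complement of any fixed bounded set.

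The main tool I would use is the Georgii-Nguyen-Zessin identity, according to which the Papangelou conditional intensity of $P$ at a candidate point $(x,R)$ is $z\, q^{D(x,R;\omega)}$, where $D(x,R;\omega) = N_{cc}^{\Lambda}(\omega + \delta_{(x,R)}) - N_{cc}^{\Lambda}(\omega)$ is the variation of the local number of connected components. Since adding a single ball can only merge existing components, $D \le 1$, and therefore $q^{D} \ge q$ whenever $q \le 1$. This immediately yields stochastic domination of $P$ by a Poisson point process of intensity $zq$ and mark distribution $Q$, so the classical coverage theorem for the Boolean model with $\int R^d Q(dR) = +\infty$ delivers full coverage and hence $P = \pi^{z,Q}$. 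In this range the conjecture would in fact hold for every $z > 0$, not merely for large $z$.

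The delicate case is $q > 1$, in which the CRCM penalizes balls that merge many existing clusters and thereby a priori suppresses the very large balls responsible for coverage under $\pi^{z,Q}$. The heuristic to make rigorous is that for large $z$ the configuration is deeply supercritical: the small balls alone percolate and form a unique infinite cluster of volume density close to one, in which case almost every added ball $B(x,R)$ either sits inside or merely touches that single cluster and satisfies $D(x,R;\omega) \in \{0,1\}$. Carrying this out would require a coarse-graining/renormalization comparison of $P$ to a supercritical Bernoulli site percolation, together with a Burton-Keane style uniqueness of the infinite cluster under $P$, both built from GNZ intensity bounds and from the compactness of specific entropy level sets exploited in the proof of Theorem~\ref{bigtheo}. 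Combined with the non-integrability of $R^d$ under $Q$, the resulting essentially-constant effective Papangelou intensity would bring us back to the $q \le 1$ situation.

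The main obstacle, and the reason the statement is phrased as a conjecture rather than a theorem, is precisely this last step when $q > 1$. Without a priori geometric control, nothing forbids an exotic stationary CRCM whose typical configurations contain many isolated small clusters scattered throughout space; for such a measure $D(x,R;\omega)$ may be arbitrarily negative and $q^{D}$ arbitrarily small, breaking the comparison with $\pi^{z,Q}$. A successful proof would seem to require a genuinely new ingredient — perhaps a disagreement percolation or a Dobrushin-style uniqueness argument tailored to the merging interaction — to rule out such pathological CRCMs uniformly over the simplex.
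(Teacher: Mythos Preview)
The statement is a conjecture, and the paper does not prove it either; Section~\ref{section.conjecture} offers only a heuristic with an explicitly flagged non-rigorous step. Your proposal should therefore be read as a competing heuristic, not an alternative proof.

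Two remarks on your $q\le 1$ paragraph. First, under the standing assumption {\bf (A)} the non-integrable regime $\int R^d Q(dR)=+\infty$ forces $q\ge 1$, so $q<1$ is simply not in scope (and $q=1$ is trivial). Second, the direction of the domination is stated backwards: the bound $q^{D}\ge q$ gives a Papangelou intensity bounded \emph{below} by $zq$, which means $P$ \emph{dominates} $\pi^{zq,Q}$, not the converse. Coverage being increasing, the conclusion you want would still follow --- but the whole paragraph is moot in the relevant parameter range.

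For $q>1$ your heuristic and the paper's are genuinely different. You aim to show directly, via coarse-graining and renormalization, that for large $z$ the configuration is so supercritical that any inserted ball touches at most the unique giant cluster, forcing $D\in\{0,1\}$ and restoring the Poisson comparison. The paper instead introduces the mean density $N_P$ of connected components per unit volume; under the extra hypothesis that radii are bounded below by some $R_0>0$, the crude bound $D(x,R;\omega)\ge -C_0 R^d$ yields rigorously that $P$ dominates $\pi_{z,\tilde Q}$ with $\tilde Q(dR)=q^{-C_0 R^d}Q(dR)$, and hence $N_P\le e^{-Cz}$. The non-rigorous leap is to replace the ergodic-theorem estimate $D(x,R;\omega)\ge -2v_d N_P R^d - K_{x,\omega}$ by a version with $K$ uniform in $(x,\omega)$; feeding this back through GNZ and the domination gives a closed self-consistent inequality $N_P\le zq\,e^{-c z N_P^{-1/d}}$ (for the specific choice $Q(dR)=\tfrac{d-1}{R^d}\1_{[1,\infty)}(R)\,dR$), whose only solution for large $z$ is $N_P=0$, hence $P=\pi^{z,Q}$.

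Your diagnosis of the obstruction --- the possibility of exotic stationary CRCMs with many scattered small clusters making $D$ very negative --- is exactly the same obstacle the paper runs into: their uniform-$K$ assumption is precisely a way of postulating that such clusters do not proliferate. The paper's route has the merit of producing a concrete closed inequality in a single scalar $N_P$, so a future rigorous argument might more naturally proceed by controlling the error in that uniform-$K$ replacement than by the full renormalization scheme you outline.
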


Note also that it is unusual in statistical mechanics that the non uniqueness result is obtained for $z$ small (and not large). The proof of the conjecture would reinforce this curious behaviour. Let us finish this section by giving an interpretation of the phase transition conjecture as a competition between the Poisson process and the energy density. Recall that the CRCM on a finite window is a Poisson process with the unormalized density $q^{N{cc}}$. On one hand, since the Poisson process covers completely the space $\R^d$, it influences the CRCM to have an unique connected component which annihilates the energy contribution. The CRCM tends to be a Poisson process and more $z$ is large more this influence is strong. On the other hand the energy density influences the CRCM to have several connected components which tends to seperate the CRCM from the Poisson process. This competition between the Poisson point process and the energy is called Entropy-Energy competition in statistical Physics. We prove in \ref{bigtheo2} that the competition is well balanced for $z$ small enough. Both forces can influence the infinite volume phase. We believe that the Poisson process dominates the competition when $z$ is large enough and it is the sense of the conjecture. Heuristic arguments are given in Section \ref{section.conjecture}.

\section{Proof of \ref{bigtheo} }
\label{section.preuve.theo1}
In Section \ref{section.theo1.bon.candidat} we construct a sequence of finite-volume CRCM $(\bar{P}_n)_n$ from which we extract an accumulation point $\bar{P}$. The compactness (for the local convergence) of level sets of the specific entropy is the main tool here. Then it remains to prove that $\bar{P}$ satisfies the DLR equations. To this end we need to show first that $\bar P$ has at most one unique infinite connected component. This question is addressed in  Section \ref{section.unicité.ccinfini}. Finally in Section \ref{section.DLR.CRCM} the DLR equations are proved. 
The idea is simple, since $\bar P_n$ satisfies the DLR equations and that $(\bar P_n)$ tends to $\bar P$ for the local convergence, we get the DLR equations for $\bar P$ in passing through the limit. However the Gibbs kernels are not local and so a sequence of localizing events has to be introduced.

\subsection{Existence of a limit point} \label{section.theo1.bon.candidat}

For $n$ a positive integer, we set $\Lambda_n=]-n,n]^d$ and we define the finite-volume Gibbs measure with free boundary condition as follow
$$
P_n(d\omega) = P_n^{z,Q,q}(d\omega)
=
\frac{1}{Z_n}q^{N_{cc}(\omega)}\pi^{z,Q}_{\Lambda_n}(d\omega),
$$

where $Z_n=\int_{\Ocb} q^{N_{cc}(\omega)}\pi_{\Lambda_n}^{z,Q,q}(d\omega)$ is the normalizing constant. We need to define a stationary version of $P_n$. Let $\tau_x$ be the translation of vector $x$. 
Then we define $\hat{P}_n=\hat{P}_n^{z,Q,q}$ as the probability measure $\underset{i \in \Z^d}{\otimes} P_n^{z,Q,q} \circ \tau_{2ni}^{-1}$ and finally
$$
\bar{P}_n=\bar{P}_n^{z,Q,q}= \frac{1}{(2n)^d} \underset{i \in I_n}{\sum} \hat{P}_n^{z,Q,q} \circ \tau_i^{-1},
$$

where $I_n=]-n,n]^d \cap \Z^d$. 
Then $\bar{P}_n$ is invariant under the translations $(\tau_i)_{i \in \Z^d}$ (i.e. $\bar{P}_n$ is stationary). Our aim is to find an accumulation point of the sequence $(\bar P_n)$ for the suitable local convergence topology.  

\begin{defi} 
A function $f$ is local if there exists a bounded set $\Lambda$ such that $f(\omega)=f(\omega_{\Lambda})$ for all configurations $\omega$ in $\Ocb$.
A sequence $(\mu_n)$ of measures converges to $\mu$ for the local convergence topology if, for all bounded local functions $f$ we have
$$
\int_{\Ocb} f d\mu_n \underset{n \to \infty}{\longrightarrow} \int_{\Ocb} f d\mu.
$$

\end{defi}

The specific entropy is a powerful tool for proving the tightness for such topology. Let $\mu$ and $\nu$ be two probability measures on $\Ocb$. The relative entropy of $\mu$ with respect to $\nu$ on the set $\Lambda_n$ is defined by

\begin{equation*}
\mathcal{I}_{\Lambda_n}(\mu | \nu)=
\left\lbrace
\begin{array}{ccc}
\int f \ \ln(f) \ d\nu_{\Lambda_n}  & \text{if}& \mu_{\Lambda_n} \ll \nu_{\Lambda_n}, \ f=\frac{d\mu_{\Lambda_n}}{d\nu_{\Lambda_n}},\\
+ \infty & \text{else} &,
\end{array}\right.
\end{equation*}
where $\mu_{\Lambda_n} \ll \nu_{\Lambda_n}$ means that $\mu_{\Lambda_n}$ is absolutely continuous with respect to $\nu_{\Lambda_n}$.

\begin{defi}
Let $\mu$ be a stationary probability measure on $\Ocb$. Then
$$
\mathcal{I}^z(\mu)= \underset{n \to \infty}{\lim} \frac{ \mathcal{I}_{\Lambda_n}(\mu | \pi^{z,Q})}{|\Lambda_n|}
$$
is the specific entropy of $\mu$ with respect to $\pi^{z,Q}$.
\end{defi}

Note that the limit above always exists. We refer to \cite{g} for a general presentation. The following proposition is our tightness tool.

\begin{propo}[Proposition 2.6 \cite{GZ}]\label{compact} For every $c_1,c_2 \geq 0$ the set
$$
\{ \mu \text{ stationary probability measures}, \mathcal{I}^{z}(\mu) \leq c_1i(\mu)+c_2 \},
$$
where $i(\mu)$ is the mean number of points in the box $[0,1]^d$ for the probability measure $\mu$, is compact and sequentially compact for the local convergence topology.

\end{propo}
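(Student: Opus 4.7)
The plan is to reduce the mixed constraint $\mathcal{I}^z(\mu)\le c_1 i(\mu)+c_2$ to a pure bound on the specific entropy, and then to invoke the standard compactness of level sets of $\mathcal{I}^z$ in the local convergence topology.

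The key estimate I would establish first is the super-linear bound
\[
\mathcal{I}^z(\mu)\ \ge\ i(\mu)\log\!\left(\frac{i(\mu)}{z}\right)-i(\mu)+z
\]
for every stationary probability measure $\mu$ on $\Ocb$. This follows from three standard ingredients: superadditivity of the finite-volume relative entropies $\mathcal{I}_\Lambda(\,\cdot\,|\pi^{z,Q})$ on disjoint windows, coming from the product structure of the Poisson reference and giving $\mathcal{I}^z(\mu)\ge \mathcal{I}_{[0,1]^d\times\R^+}(\mu|\pi^{z,Q})$; the data-processing inequality applied to the count $N(\omega)=\omega([0,1]^d)$, reducing the problem to the one-dimensional relative entropy between the law of $N$ under $\mu$ and the Poisson law of parameter $z$; and the fact that, among distributions on $\N$ with prescribed mean $i(\mu)$, the Poisson with parameter $i(\mu)$ minimizes the relative entropy to Poisson$(z)$ (a Lagrange-multiplier computation). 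Combining this lower bound with the hypothesis $\mathcal{I}^z(\mu)\le c_1 i(\mu)+c_2$ forces $i(\mu)\le C(c_1,c_2,z)$ and hence $\mathcal{I}^z(\mu)\le c_1 C+c_2=:c$, so the set in the proposition is contained in the pure level set $A_c=\{\mu\text{ stationary}:\mathcal{I}^z(\mu)\le c\}$.

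It then suffices to prove that $A_c$ is compact and sequentially compact. Tightness in the local convergence topology reduces, by stationarity, to uniform tightness of the one-point distributions $\mu\mapsto \mu\circ \omega(\Lambda)^{-1}$ for each bounded $\Lambda$. Applying the variational formula
\[
\int \alpha\, \omega(\Lambda)\, d\mu\ \le\ \mathcal{I}_\Lambda(\mu|\pi^{z,Q})+\log\int e^{\alpha\, \omega(\Lambda)}\,d\pi^{z,Q}_\Lambda,
\]
for $\alpha>0$ small, together with $\mathcal{I}_\Lambda(\mu|\pi^{z,Q})=O(|\Lambda|)$ (consequence of $\mathcal{I}^z(\mu)\le c$ via superadditivity), gives a uniform exponential moment on $\omega(\Lambda)$ and hence the required tightness. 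Closedness of $A_c$ is then the lower semi-continuity of $\mathcal{I}^z$ under local convergence, which follows from the Legendre representation of relative entropy as a supremum of expectations of local bounded continuous functionals, combined again with superadditivity to pass from finite-volume entropies to the specific one.

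The main technical obstacle I anticipate is the non-local-compactness of the marked configuration space $\Ocb$: the marks $R\in\R^+$ are unbounded so classical Prohorov-type criteria do not apply directly to the mark coordinate. This is overcome by disintegrating $\mu$ along the ground point process, using that under $\pi^{z,Q}$ the marks are i.i.d. with fixed distribution $Q$; the ground-level bounds obtained from the entropy inequality then suffice to control the marked configurations, because $Q$ being fixed, the relative entropy is blind to the mark-level randomness and all the estimates pass through unchanged.
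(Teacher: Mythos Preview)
The paper does not give its own proof of this proposition; it is quoted verbatim as Proposition~2.6 of \cite{GZ} and used as a black box. Your outline follows the standard Georgii--Zessin route: bound $i(\mu)$ via the super-linear lower bound $\mathcal{I}^z(\mu)\ge i(\mu)\log(i(\mu)/z)-i(\mu)+z$, reduce to a pure level set $A_c=\{\mathcal{I}^z\le c\}$, and then prove tightness plus lower semicontinuity. The ingredients you list (superadditivity, data processing for the count variable, the Poisson minimiser at fixed mean, the variational inequality for exponential moments) are the right ones.

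There is, however, a genuine logical gap in the reduction step. You write that the set of the proposition is contained in $A_c$ and that ``it then suffices to prove that $A_c$ is compact''. Containment in a compact set does not give compactness; you still need the original set $\{\mathcal{I}^z(\mu)\le c_1 i(\mu)+c_2\}$ to be \emph{closed} in the local convergence topology. This is not automatic: $\mathcal{I}^z$ is lower semicontinuous, but $\mu\mapsto i(\mu)$ is the expectation of the \emph{unbounded} local functional $\omega\mapsto\omega([0,1]^d)$, so a priori it is only lower semicontinuous as well, and the inequality $\mathcal{I}^z(\mu)-c_1 i(\mu)\le c_2$ has the wrong sign for a direct closedness argument. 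The fix is to use your own exponential-moment estimate: on $A_c$ you have a uniform bound $\sup_{\mu\in A_c}\int e^{\alpha\,\omega([0,1]^d)}\,d\mu<\infty$ for some $\alpha>0$, which gives uniform integrability of $\omega([0,1]^d)$ and hence \emph{continuity} of $i$ along locally convergent sequences in $A_c$. With that, the set $\{\mathcal{I}^z\le c_1 i+c_2\}\cap A_c$ is closed in the compact $A_c$, and the argument closes. You should make this step explicit.
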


So by \ref{compact}, to ensure the existence of an accumulation  point for the sequence $(\bar{P}_n)$, we just have to prove an uniform bound for the specific entropy $\mathcal{I}^{z}(\bar{P}_n)$. 

\begin{propo}
For all $n$ we have,
$$
\mathcal{I}^z(\bar{P}_n) \leq z + \max(\ln (q),0) i(\bar{P}_n).$$
\end{propo}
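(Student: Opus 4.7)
The plan is to compute the finite-volume relative entropy of $P_n$ on $\Lambda_n$ explicitly, bound it using the two estimates from \ref{borneimportante}, and then transfer the bound to the specific entropy of $\bar P_n$ using the product structure of $\hat P_n$.

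First, since $P_n$ has density $q^{N_{cc}(\omega)}/Z_n$ with respect to $\pi^{z,Q}_{\Lambda_n}$, a direct computation gives
$$\mathcal{I}_{\Lambda_n}(P_n \mid \pi^{z,Q}) = -\ln Z_n + \ln(q)\, \mathbb{E}_{P_n}\bigl[N_{cc}(\omega)\bigr].$$
Inserting the empty configuration yields $Z_n \ge e^{-z|\Lambda_n|}$, hence $-\ln Z_n \le z|\Lambda_n|$. For the second term I split on the sign of $\ln q$: when $q\ge 1$, the bound \eqref{majoimportante} gives $\ln(q) N_{cc}(\omega) \le \ln(q)\,\omega(\Lambda_n)$; when $q<1$, the term is non-positive since $N_{cc}\ge 0$. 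In either case
$$\mathcal{I}_{\Lambda_n}(P_n \mid \pi^{z,Q}) \le z|\Lambda_n| + \max(\ln q,0)\, \mathbb{E}_{P_n}\bigl[\omega(\Lambda_n)\bigr].$$

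Next I transfer this bound to $\bar P_n$. The measure $\hat P_n$ is a product of independent translated copies of $P_n$ on the tiles $\Lambda_n + 2ni$, so for a box $\Lambda_m$ with $m\gg n$ one has
$$\mathcal{I}_{\Lambda_m}(\hat P_n \mid \pi^{z,Q}) = \frac{|\Lambda_m|}{|\Lambda_n|}\, \mathcal{I}_{\Lambda_n}(P_n \mid \pi^{z,Q}) + o(|\Lambda_m|),$$
the error coming from tiles intersecting $\partial\Lambda_m$. Convexity of the relative entropy together with the translation invariance of $\pi^{z,Q}$ then gives
$$\mathcal{I}_{\Lambda_m}(\bar P_n \mid \pi^{z,Q}) \le \frac{1}{(2n)^d}\sum_{i\in I_n}\mathcal{I}_{\Lambda_m}(\hat P_n\circ \tau_i^{-1} \mid \pi^{z,Q}) = \mathcal{I}_{\Lambda_m}(\hat P_n \mid \pi^{z,Q}) + o(|\Lambda_m|).$$
Dividing by $|\Lambda_m|$ and letting $m\to\infty$ yields $\mathcal{I}^z(\bar P_n)\le \mathcal{I}_{\Lambda_n}(P_n\mid \pi^{z,Q})/|\Lambda_n|$.

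Finally, stationarity of $\bar P_n$ together with the definition of $\hat P_n$ as a product of translates of $P_n$ identifies $i(\bar P_n) = \mathbb{E}_{P_n}[\omega(\Lambda_n)]/|\Lambda_n|$. Combining this with the two previous displays gives the desired inequality. The only mildly delicate step is the middle one: keeping track of the boundary contributions in the passage from $\hat P_n$ (only $(2n\mathbb{Z})^d$-periodic) to its stationarization $\bar P_n$, and checking that these corrections vanish after dividing by $|\Lambda_m|$. The rest is essentially bookkeeping using the explicit Radon–Nikodym derivative.
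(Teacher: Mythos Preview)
Your argument is correct and follows the same core computation as the paper: express $\mathcal{I}_{\Lambda_n}(P_n\mid\pi^{z,Q})=-\ln Z_n+\ln(q)\,\mathbb{E}_{P_n}[N_{cc}]$, bound $Z_n$ below by the mass of the empty configuration, and control the $N_{cc}$ term by $\omega(\Lambda_n)$ when $q\ge 1$ and by $0$ when $q<1$.

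The one genuine difference is in the passage from $P_n$ to $\bar P_n$. The paper simply invokes Proposition~15.52 of Georgii to obtain the \emph{equality} $\mathcal{I}^z(\bar P_n)=|\Lambda_n|^{-1}\mathcal{I}_{\Lambda_n}(P_n\mid\pi^{z,Q})$, and likewise asserts $i(P_n)=i(\bar P_n)$ directly. You instead rederive the needed inequality $\mathcal{I}^z(\bar P_n)\le |\Lambda_n|^{-1}\mathcal{I}_{\Lambda_n}(P_n\mid\pi^{z,Q})$ by hand, using additivity of relative entropy over the independent tiles of $\hat P_n$ and convexity under the averaging that defines $\bar P_n$. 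Your route is more self-contained and makes explicit why the periodization/stationarization does not increase specific entropy, at the cost of tracking boundary terms; the paper's route is shorter but outsources exactly this computation to the cited reference. Either way the remaining steps are identical. (A minor remark: you cite \ref{borneimportante} for both bounds, but for $q<1$ you only use the trivial $N_{cc}\ge 0$, not the lower bound \eqref{minoimportante}.)
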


\begin{proof}

First, it is straightforward that by Proposition 15.52 in \cite{g}
  
\begin{align}\label{eg1}
\mathcal{I}^z(\bar{P}_n)=\frac{1}{| \Lambda_n|} \mathcal{I}_{\Lambda_n}(P_n |\pi^{z,Q}),
\end{align}
with
\begin{align}\label{eg2}
\mathcal{I}_{\Lambda_n}(P_n |\pi^{z,Q})
& =
\int_{\Ocb} \ln \left( \frac{q^{N_{cc}(\omega)}}{Z_n} \right) P_n(d\omega)  \nonumber
\\  & = 
-\ln(Z_n) +\ln(q) \int_{\Ocb} N_{cc}(\omega) P_n(d \omega).
\end{align}
Moreover $Z_n \ge
P_n(\omega =0)=
\exp(-z |\Lambda_n| )$ and 

\begin{align}\label{eg4}
0 
\leq
\int_{\Ocb} N_{cc}(\omega) P_n(d \omega)
\leq
\int_{\Ocb} \omega(\R^d) P_n(d \omega)
=
|\Lambda_n| i(P_n)
=
|\Lambda_n| i(\bar{P}_n).
\end{align}
Adding  together \eqref{eg1}, \eqref{eg2} and \eqref{eg4} we get the result.
\end{proof}

The existence of a an accumulation point $\bar{P}=\bar{P}^{z,Q,q}$ follows and for simplicity we write that the sequence $(\bar{P}_n)$ converges to $\bar P$ in place of a subsequence.

For technical reasons involving the DLR($\Lambda$) equation,  the sequence $(\bar{P}_n)$ has to be modified by the the sequence $(\mu_n^{\Lambda})$;

\begin{equation}\label{defimodif}
\mu_n^{\Lambda} 
=
\mu_n^{\Lambda,z,Q,q} 
= 
\frac{1}{(2n)^d} \sum\limits_{\substack{i \in I_n \\ \Lambda \subseteq \tau_i(\Lambda_n)}} P_n^{z,Q,q} \circ \tau_i^{-1}.
\end{equation}

This is no longer a probability measure sequence but the \ref{propo.convlocal.DLR} below shows that the local convergence to $\bar{P}$ holds as well. Moreover each $\mu_n^{\Lambda}$ satisfies the DLR($\Lambda$) equation.
\begin{propo}\label{propo.convlocal.DLR}
For all local bounded functions $f$ we have

$$
\underset{n \to \infty}{lim} \Big{|}\int_M f(\omega)  \ \mu_n^{\Lambda}(d\omega)-\int_M f(\omega) \ \bar{P}_n(d\omega)\big{|}=0 .
$$
and for all $n\ge 1$
$$
\int_{\Ocb} f(\omega) \mu_n^{\Lambda}(d \omega)
=
\int_{\Ocb} \int_{\Ocb} f(\omega'_{\Lambda} + \omega_{\Lambda^c})
\frac1{Z_{\Lambda}(\omega_{\Lambda^c})} q^{N_{cc}^{\Lambda} (\omega'_{\Lambda} + \omega_{\Lambda^c}) }
\pi^{z,Q}_{\Lambda}(d\omega') \mu_n^{\Lambda}(d \omega).
$$
\end{propo}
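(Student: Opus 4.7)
The plan is to prove the two claims separately, both by decomposing the sums defining $\mu_n^\Lambda$ and $\bar{P}_n$ and handling each translate $P_n\circ\tau_i^{-1}$ individually.

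For the local convergence, I would compare the two expressions term by term:
\[
\bar{P}_n[f] - \mu_n^\Lambda[f] = \frac{1}{(2n)^d}\sum_{i \in I_n} \hat{P}_n\circ\tau_i^{-1}[f] - \frac{1}{(2n)^d}\sum_{\substack{i \in I_n\\ \Lambda \subseteq \tau_i(\Lambda_n)}} P_n\circ \tau_i^{-1}[f].
\]
Fix a bounded set $\Lambda_0$ containing both the support of $f$ and the set $\Lambda$. For the "bulk" indices $i$ with $\Lambda_0 \subseteq \tau_i(\Lambda_n)$, I claim $\hat{P}_n\circ\tau_i^{-1}[f] = P_n\circ\tau_i^{-1}[f]$: under $\hat{P}_n$ the configuration is a superposition of independent copies of $P_n$ supported on the disjoint blocks $\Lambda_n + 2nj$ ($j\in\Z^d$), and the pushforward by $\tau_i^{-1}$ only sees the $j=0$ block when evaluating $f$, because $\Lambda_0 \subseteq \tau_i(\Lambda_n)$ means the support of $f\circ\tau_i$ is contained in $\Lambda_n$ and hence disjoint from all other blocks. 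The two sums therefore cancel on the bulk. The remaining "boundary" indices — those with $\Lambda_0 \not\subseteq \tau_i(\Lambda_n)$ — form a shell of cardinality $O(n^{d-1})$, so their contribution is bounded by $c\|f\|_\infty/n$ and vanishes as $n\to\infty$.

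For the DLR equation, I would show that each individual measure $P_n\circ\tau_i^{-1}$ (for $i$ with $\Lambda \subseteq \tau_i(\Lambda_n)$) satisfies the DLR$(\Lambda)$ equation; the equation is linear in the measure and so survives the finite average defining $\mu_n^\Lambda$. Since $P_n(d\omega)\propto q^{N_{cc}(\omega)}\pi^{z,Q}_{\Lambda_n}(d\omega)$ is supported on configurations in $\Lambda_n$, disintegrating $P_n$ with respect to $\omega_{\Lambda_n\setminus\Lambda'}$ (where $\Lambda':=\tau_i^{-1}(\Lambda) \subseteq \Lambda_n$) yields a conditional law proportional to $q^{N_{cc}(\omega'_{\Lambda'}+\omega_{\Lambda_n\setminus\Lambda'})}\pi^{z,Q}_{\Lambda'}(d\omega')$. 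Factoring out the $\omega'$-independent term $q^{N_{cc}(\omega_{\Lambda'^c})}$ rewrites this density as $q^{N_{cc}^{\Lambda'}(\omega'_{\Lambda'}+\omega_{\Lambda'^c})}/Z_{\Lambda'}(\omega_{\Lambda'^c})$: the outside configuration is finite and supported in $\Lambda_n$, so the limit \eqref{definitionlimite} defining $N_{cc}^{\Lambda'}$ stabilizes at once and coincides with the finite-volume difference $N_{cc}(\omega)-N_{cc}(\omega_{\Lambda'^c})$. This is exactly the DLR$(\Lambda')$ identity for $P_n$; translating by $\tau_i$ (and using the compatibility \eqref{compatibility}) produces DLR$(\Lambda)$ for $P_n\circ\tau_i^{-1}$.

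The main point to be careful about is the identification of the paper's local quantity $N_{cc}^\Lambda$ — defined by the global limit \eqref{definitionlimite} — with the elementary finite-volume difference of connected-component counts. This identification is what bridges the concrete Gibbs density defining $P_n$ and the DLR prescription of the paper. It is immediate because $P_n$ lives on finite configurations inside the bounded window $\Lambda_n$, but it must be stated explicitly; no genuinely long-range issues arise here, as those only surface later in the infinite-volume limit $\bar{P}$.
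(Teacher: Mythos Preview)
Your proposal is correct and follows essentially the same approach as the paper, which is extremely terse here: it defers the first claim to \cite{david} Lemma 3.5 (whose content is exactly the bulk/boundary cancellation you describe) and dismisses the second as a ``standard consequence of the compatibility equations \eqref{compatibility}''. You have unpacked both arguments accurately; the only minor wording quibble is that in the DLR part the passage from $\Lambda'=\tau_i^{-1}(\Lambda)$ back to $\Lambda$ is translation covariance of the kernels rather than compatibility per se --- compatibility is what you use in the factorization step to turn the raw density $q^{N_{cc}}$ into the $N_{cc}^{\Lambda'}$ form.
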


\begin{proof}
The proof of the first part is given in \cite{david} Lemma 3.5. The proof of the DLR($\Lambda$) equation  for $\mu_n^\Lambda$ is a standard consequence of the compatibility equations \eqref{compatibility}.

\end{proof}

%\begin{proof}
%Let $\Delta$ be the support of $f$. $\Delta$ and $\Lambda$ are bounded, so for $n$ we have large enough $  \Lambda \cup \Delta  \subseteq   \Lambda_n$.
%\begin{align*}
%\delta_n &= |\int_M f(\omega)  \ \mu_n^{\Lambda}(d\omega)-
%\int_M f(\omega) \ \bar{P}_n(d\omega)|
%\\&= 
%\frac{1}{(2n)^d} \left\lvert \sum\limits_{\substack{i \in I_n \\ \Lambda \subseteq \tau_i(\Lambda_n)}} 
%\int_M f(\omega) P_n \circ \tau_i^{-1} (d \omega) - 
%\underset{i \in I_n}{\sum} \int_M f(\omega) \hat{P}_n \circ \tau_i^{-1} (d \omega) \right\rvert
%\\&= 
%\frac{1}{(2n)^d} \left\lvert \sum\limits_{\substack{i \in I_n \\ \Lambda \subseteq \tau_i(\Lambda_n) \\ \Delta \not\subseteq \tau_i(\Lambda_n)}} \int_M f(\omega) P_n \circ \tau_i^{-1} (d \omega) - \sum\limits_{\substack{i \in I_n \\ \Lambda \cup \Delta \not\subseteq \tau_i(\Lambda_n)}} \int_M f(\omega) \hat{P}_n \circ \tau_i^{-1} (d \omega) \right\rvert
%\\ &\leq 
%\frac{1}{(2n)^d} \times 2 \times ||f|| \times \#( \{ i \in I_n, \Lambda \cup \Delta \not\subseteq \tau_i(\Lambda_n) \} ).
%\end{align*}
%Let us take $k \in \mathbb{N}$ such that $\Delta \cup \Lambda \subseteq [-k,k]^d \subseteq \Lambda_n$. 
%We have
%$$
%\#( \{ i \in I_n, \Lambda \cup \Delta \not\subseteq v_i(\Lambda_n) \} ) %\leq 
%2dk \times (2n)^{d-1} ,
%$$
%so 
%$$ \delta_n \leq \frac{4dk}{2n+1} \underset{n \to \infty}{\rightarrow} 0.$$
%\end{proof}

\subsection{Uniqueness of the infinite connected component} \label{section.unicité.ccinfini}

For $k$ in $\N \cup\{\infty\}$, we denote by $\{N^\infty_{cc}= k \}$ (respectively $\{N^\infty_{cc} \leq k \}$) the event of configurations $\omega$ having $k$ (respectively no more than $k$) infinite connected component(s).
This section is devoted to the proof of the following proposition.

\begin{propo}\label{propo.unicité}
Under the assumption $(\bf A)$ we have
$$\bar{P}^{z,Q,q}( \{  N^\infty_{cc} \leq 1 \} ) =1.$$
\end{propo}

%We will prove this result in the usual fashion, following \cite{meeroy}. 
%We will first exclude the case of a finite number, larger than one, of infinite connected components and for the case of a infinite number of infinite connected components we will use the Burton and Keane technic, see \cite{burkea}.

The proof is based on a local modification property which claims that the configurations in a finite box can be modified with positive probability. 

\begin{propo}[Local modification]\label{modiflocal}
Under the assumption $(\bf A)$, for all $\Lambda$ bounded, all $B \in \Fcb _{\Lambda^c}$ satisfying $\bar{P}(B)>0$ and all $A \in \Fcb _{\Lambda}$ satisfying $\pi^{z,Q}(A)>0$ we have
\begin{equation}\label{localmodif}
\bar{P}^{z,Q,q}(A \cap B)>0.
\end{equation}
\end{propo}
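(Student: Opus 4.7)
The plan is to pass to the limit in the DLR$(\Lambda)$ equation satisfied by $\mu_n^\Lambda$ (\ref{propo.convlocal.DLR}), using strict positivity of the Gibbs conditional density together with a localization argument to overcome its non-local dependence on $\omega_{\Lambda^c}$.

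First I would handle the case where $B$ is \emph{local}, say $B\in\Fcb_{\Lambda'\setminus\Lambda}$ for some bounded $\Lambda'\supset\Lambda$. The DLR equation rewrites as
$$\mu_n^\Lambda(A\cap B) = \int_B g_A(\omega_{\Lambda^c})\,\mu_n^\Lambda(d\omega), \qquad g_A(\omega_{\Lambda^c}) := \frac{1}{Z_\Lambda(\omega_{\Lambda^c})}\int \1_A(\omega'_\Lambda)\, q^{N_{cc}^\Lambda(\omega'_\Lambda+\omega_{\Lambda^c})}\,\pi^{z,Q}_\Lambda(d\omega'),$$
and $g_A>0$ everywhere under $(\bf A)$. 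The obstruction is that $g_A$ is not a local function of $\omega$, so one cannot pass to the limit in the integral directly; the remedy is to build a sequence of \emph{local} events $E_M\in\Fcb_{\Lambda^c}$ and constants $c_M>0$ such that $g_A\geq c_M$ uniformly on $E_M$ and $\bar{P}(E_M)\nearrow 1$.

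In the bounded-radii case (first clause of $(\bf A)$), the natural candidate is $E_M=\{\omega_{\Lambda^c}(\Delta_{R_0}\setminus\Lambda)\leq M\}$, which is local; combining the two bounds of \ref{borneimportante} with either $Z_\Lambda\leq e^{(q-1)z|\Lambda|}$ (if $q\geq 1$) or $Z_\Lambda\leq q^{K-\omega_{\Lambda^c}(\Delta_{R_0}\setminus\Lambda)}$ (if $q<1$) yields an explicit $c_M>0$. The inequality $\mu_n^\Lambda(A\cap B\cap E_M)\geq c_M\,\mu_n^\Lambda(B\cap E_M)$ then involves only local events, so passes to the limit to give $\bar{P}(A\cap B\cap E_M)\geq c_M\,\bar{P}(B\cap E_M)>0$ for $M$ large enough (since $\bar{P}(E_M)\nearrow 1$ and $\bar{P}(B)>0$), hence $\bar{P}(A\cap B)>0$. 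A general $B\in\Fcb_{\Lambda^c}$ is then handled by approximating it in symmetric difference by local events $B_\varepsilon$ with $\bar{P}(B\triangle B_\varepsilon)<\varepsilon$, and combining the local-case lower bound applied to $B_\varepsilon$ with $\bar{P}(A\cap B)\geq\bar{P}(A\cap B_\varepsilon)-\varepsilon$.

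The main obstacle I anticipate is the $q\geq 1$ unbounded-radii case (second clause of $(\bf A)$), where \ref{borneimportante} provides no lower bound on $N_{cc}^\Lambda$. Here I would first shrink $A$ to a sub-event $A'\subset A$ with $\pi^{z,Q}_\Lambda(A')>0$ on which both the cardinality and the maximum radius of $\omega'_\Lambda$ are bounded, so that $L(\omega'_\Lambda)$ is confined to an explicit window $W\supset\Lambda$; the event $E_M$ would then be the local event controlling the number of balls of $\omega_{\Lambda^c}$ whose centres lie in $W\oplus B(0,M)$ together with their maximum radius, which caps the number of components of $L(\omega_{\Lambda^c})$ that $\omega'_\Lambda$ can possibly merge. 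The $d$-moment assumption on $Q$ is exactly what guarantees $\bar{P}(E_M)\to 1$: the expected number of balls centred outside $W\oplus B(0,M)$ whose radii nonetheless reach $W$ is finite and tends to $0$ as $M\to\infty$, so the non-locality coming from "a very large far-away ball reaches $W$" only contributes a vanishing correction to the localization event.
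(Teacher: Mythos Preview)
Your overall strategy---use the DLR equation for $\mu_n^\Lambda$, bound the conditional density from below on a local event, then pass to the limit---matches the paper's. The handling of general $B$ differs (you approximate in symmetric difference; the paper uses the martingale $\phi_\Gamma^B=\E_{\bar P}[\1_B\mid\Fcb_\Gamma]$ to localize), but both work.

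The genuine gap is in the unbounded-radii case $q\ge 1$. You claim that \ref{borneimportante} ``provides no lower bound on $N_{cc}^\Lambda$'', but reread the hypothesis of \eqref{minoimportante}: it only requires $R\le R_0$ for $(x,R)\in\omega_\Lambda$, \emph{not} for all of $\omega$. So once you restrict $A$ to $A_{R_0}=A\cap\{\forall(x,R)\in\omega_\Lambda,\;R\le R_0\}$ (which you already propose), the bound $N_{cc}^\Lambda(\omega'_\Lambda+\omega_{\Lambda^c})\ge K-\omega(\Delta_{R_0}\setminus\Lambda)$ applies regardless of the radii in $\omega_{\Lambda^c}$. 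Combined with $Z_\Lambda\le e^{(q-1)z|\Lambda|}$ this gives $g_{A_{R_0}}(\omega_{\Lambda^c})\ge c\,q^{-\omega(\Delta_{R_0}\setminus\Lambda)}$, and the same local event $E_M=\{\omega(\Delta_{R_0}\setminus\Lambda)\le M\}$ you used in the bounded case finishes the job. No $d$-moment is needed.

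This matters: the paper explicitly notes (end of Section~\ref{section.unicit�.ccinfini}) that \ref{modiflocal} must hold without the integrability assumption, because it is reused in the proof of \ref{bigtheo2} where $\int R^d\,Q(dR)=+\infty$. Your proposed workaround both weakens the result and is itself incomplete: to get $\bar P(E_M)\to 1$ for your more elaborate $E_M$ you would need to control, under $\bar P$, the number of balls centred far away that reach $W$---but this is not local, and at this stage of the argument no stochastic domination of $\bar P$ by a Poisson process has been established (that comes only later, in \ref{propo.proba.Aij}).
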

\begin{proof}
First for any real number $R_0>0$, let
$
A_{R_0}$ be the event $A \cap \{ \omega \in \Ocb, \forall (x,R) \in \omega_{\Lambda}, \  R \leq R_0 \}.
$
By the monotone convergence Theorem, there is a finite $R_0$ such that $\pi^{z,Q}(A_{R_0}) >0$. 
Since $\bar{P}(A_{R_0} \cap B) \leq \bar{P}(A\cap B)$ it is sufficient to prove the proposition in the special case $A=A_{R_0}$ and that is what we do.
By a martingale theorem, we have 
$\1_B = \underset{\Gamma \to \R^d}{\lim} \E_{\bar{P}}[\1_B | \Fcb_{\Gamma } ]$  $\bar{P}-as$. 
Moreover the function $\E_{\bar{P}}[\1_B | \Fcb_{\Gamma} ]$, that we denote by $\phi_{\Gamma}^B$, is local and the local convergence can be applied. 
\begin{align}
\bar{P}(A \cap B) \label{calculPAB}
&=
\underset{\Gamma \to \R^d}{\lim} 
\int_{\Ocb} \1_A (\omega_{\Lambda}) 
\phi_{\Gamma}^B (\omega_{\Gamma \setminus \Lambda}) \bar{P}(d\omega)  \nonumber
\\ &=
\underset{\Gamma \to \R^d}{\lim} 
\underset{n \to \infty}{\lim}
\int_{\Ocb} \1_A (\omega_{\Lambda}) 
\phi_{\Gamma}^B (\omega_{\Gamma \setminus \Lambda}) \mu_n^{\Lambda}(d\omega) \nonumber
\\ &=
\underset{\Gamma \to \R^d}{\lim} 
\underset{n \to \infty}{\lim}
\int_{\Ocb} \int_{\Ocb} \1_A(\omega'_{\Lambda}) 
\phi_{\Gamma}^B(\omega_{\Gamma \setminus \Lambda})
\frac{ q^{N_{cc}^{\Lambda} (\omega'_{\Lambda} + \omega_{\Lambda^c}) }}
{Z_{\Lambda}(\omega_{\Lambda^c})} 
\pi^{z,Q}_{\Lambda}(d\omega') \mu_n^{\Lambda}(d \omega).
\end{align}
The second and third equalities are obtained by  \ref{propo.convlocal.DLR}. 

From now on we have to separate the cases $q\geq 1$ and $q<1$.

\begin{itemize}
\item Case $q \geq 1$.

From \eqref{majoimportante} we get
\begin{align}
Z_{\Lambda}(\omega_{\Lambda_c}) \label{majoZ}
\leq 
e^{ (q-1)z |\Lambda | }.
\end{align}
From  \eqref{minoimportante}, \eqref{calculPAB} and \eqref{majoZ},  we obtain

\begin{align}
\bar{P}(A \cap B)
& \geq
\underset{\Gamma \to \R^d}{\lim} 
\underset{n \to \infty}{\lim}
\int_{\Ocb} \int_{\Ocb} \1_A(\omega'_{\Lambda}) 
\phi_{\Gamma}^B(\omega_{\Lambda^c})
\frac{q^{ K-\omega(\Delta_{R_0} \setminus \Lambda ) } }
{  e^{(q-1)z|\Lambda |}}
\pi^{z,Q}_{\Lambda}(d\omega') \mu_n^{\Lambda}(d \omega) \nonumber
\\ &=
\underset{\Gamma \to \R^d}{\lim} 
\underset{n \to \infty}{\lim}
\int_{\Ocb} \phi_{\Gamma}^B(\omega_{\Lambda^c})
\frac{q^{ K-\omega(\Delta_{R_0} \setminus \Lambda ) } }
{e^{(q-1)z|\Lambda |}}
\pi_{\Lambda}^{z,Q}(A)\mu_n^{\Lambda}(d \omega)
\nonumber
\\
 \label{inegalitéfin1}
 &=
\frac{q^{K}}{e^{(q-1)z |\Lambda |}}
\pi^{z,Q}_{\Lambda} (A) \int_{\Ocb} \1_B(\omega_{\Lambda^c}) 
q^{-\omega(\Delta_{R_0} \setminus \Lambda)} \bar{P}(d\omega).
\end{align}
which gives $\bar{P}(A \cap B)>0$.

\item Case $q<1$.
From \eqref{minoimportante} and assumption 
$(\bf A)$, which bound the radii in the case $q<1$ , we get
\begin{align}
Z_{\Lambda}(\omega_{\Lambda^c}) \label{majoZ2}
\leq
q^{K-  \omega(\Delta_{R_0} \setminus \Lambda)}  .
\end{align}
From \eqref{majoimportante}, \eqref{calculPAB} and \eqref{majoZ2} we obtain
\begin{align}
\bar{P}(A \cap B) \label{inegalitéfin2}
& \geq
\underset{\Gamma \to \R^d}{\lim} 
\underset{n \to \infty}{\lim}
\int_{\Ocb} \int_{\Ocb} \1_A(\omega'_{\Lambda}) 
\phi_{\Gamma}^B(\omega_{\Lambda^c})
\frac{q^{ \omega'(\Lambda )} }
{q^{K - \omega(\Delta_{R_0} \setminus \Lambda   )} }
\pi^{z,Q}_{\Lambda}(d\omega') \mu_n^{\Lambda}(d \omega) \nonumber
\\ &=
\underset{\Gamma \to \R^d}{\lim} 
\underset{n \to \infty}{\lim}
\int_{\Ocb} \phi_{\Gamma}^B(\omega_{\Lambda^c})
\frac{q^{ \omega(\Delta_{R_0} \setminus \Lambda) } }
{q^{K} }
\mu_n^{\Lambda}(d \omega)
\int_{\Ocb} \1_A \
q^{ \omega'(\Lambda) }
d \pi^{z,Q}_{\Lambda}  \nonumber
\\ &=
\int_{\Ocb} \1_B(\omega_{\Lambda^c})
\frac{q^{ \omega(\Delta_{R_0} \setminus \Lambda) } }
{q^{(1- K )} }
\bar{P}(d \omega)
\int_{\Ocb} \1_A (\omega') \ q^{ \omega'(\Lambda) }
\pi^{z,Q}_{\Lambda}(d\omega').
\end{align}
which gives $\bar{P}(A \cap B)>0$ as well.
\end{itemize}

\end{proof}

Using the \ref{modiflocal}, we are now in position to prove \ref{propo.unicité} in following a standard strategy in percolation theory. We just give a sketch of the proof and we refer to \cite{meeroy} for details. First we represent $\bar P$ as a mixture of extremal ergodic stationary probability measures $\bar P=\int \bar P_\theta \Theta(d\theta)$ where each $\bar P_\theta$ satisfies the local modification \eqref{localmodif} property. We show now that for $\Theta$-a.s. all $\theta$, $\bar P_\theta(N_{cc}^\infty\le1)=1$. By ergodicity of $\bar P_\theta$, the number of infinite connected components is $\bar P_\theta$-almost surely constant. The case of a finite number, larger than one, infinite connected components is excluded thanks to the local modification property. The case of an infinite number of infinite connected components is also excluded by a Burton and Keane argument \cite{burkea}.

In the next section, the $d$-moment assumption (i.e. $\int R^d Q(dr)<\infty$) appears for the first time in the proof of \ref{bigtheo}. In particular it is not required in the proof of \ref{propo.unicité} above which will be usefull in the proof of \ref{bigtheo2} in Section \ref{section.preuve.theo2}. 

\subsection{DLR equations} \label{section.DLR.CRCM}
In this section, we fix the bounded set $\Lambda$ and we show the DLR($\Lambda$) equation. To this end  sequences $(W_{i,j})$ and $(A_{i,j})$ of events are defined on which the variable $N_{cc}^\Lambda$ is local and such that the probabilities $\bar P(W_{i,j})$ and $\bar P(A_{i,j})$ tend to one when $i$ and $j$ tend to infinity in a good way. Without loss of generality  we assume that the function $f$ in the DLR($\Lambda$) equation is local and satisfies, for a finite $R_0$,  $f(\omega)=0$ as soon as there is $(x,R)$ in $\omega_{\Lambda}$ with $R>R_0$.
The general case is obtained by standard approximations.

\begin{defi}
Let $\Delta_i=[-i,i]^d$. For $j>i$ we define
\begin{itemize}
\item
$ A_{i,j}
=
\{
\omega \in \Ocb, \ \forall (x,R) \in \omega_{\Delta_j^c},
B(x,R) \cap \Delta_i = \emptyset 
\} $,
\item
$ W_{i,j}$ the event of
$\omega$ in $\Ocb$ having at most one connected component of 
$L(\omega_{\Delta_j \setminus \Lambda })$ which intersects $\Lambda_{R_0}$ and $\Delta_i^c$,
where $\Lambda_{R_0}$ is the set $\Lambda \oplus B(0,R_0)$.
\end{itemize}
\end{defi}
Before investigating  the probability of those events, the next lemma shows the "localization" of the functional $N_{cc}^\Lambda$.

\begin{propo}\label{propo.localisation.H}
For all $j>i$ large enough (depending on $\Lambda$ and $R_0$) and for all $\omega$ in
$A_{i,j} \cap W_{i,j}$
\begin{align}
N_{cc}^\Lambda(\omega)
=
N_{cc}^\Lambda(\omega_{\Delta_j}). \nonumber
\end{align}
\end{propo}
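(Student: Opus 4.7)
The plan is to compare $N_{cc}^\Lambda(\omega)$ and $N_{cc}^\Lambda(\omega_{\Delta_j})$ through the defining limit~\eqref{definitionlimite}. I choose $i$ so large that $\Lambda_{R_0}\subseteq\Delta_i$ (this fixes the meaning of ``large enough depending on $\Lambda$ and $R_0$'') and evaluate the limit along the sequence $\Delta=\Delta_n$, $n\ge j$. It suffices to show that $N_{cc}(\omega_{\Delta_n})-N_{cc}(\omega_{\Delta_n\setminus\Lambda})$ is independent of $n\ge j$, since then letting $n\to\infty$ yields $N_{cc}^\Lambda(\omega)$, while evaluation at $n=j$ yields $N_{cc}^\Lambda(\omega_{\Delta_j})$ (the limit being trivial for the finite configuration $\omega_{\Delta_j}$).

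Constancy is obtained by induction on $n$: I add the points of $\omega_{\Delta_{n+1}\setminus\Delta_n}\subseteq\omega_{\Delta_j^c}$ one at a time and verify, for each added $X=(x,R)$, that the quantity $D_n(X)$ appearing in the proof of the proposition defining $N_{cc}^\Lambda$ vanishes. Recall that $D_n(X)\neq 0$ only if either (a) $B(X)$ meets a ball of $L(\omega_\Lambda)$ without being connected to a $\Lambda$-component it belongs to, or (b) $B(X)$ is connected to at least two $\Lambda$-components of the current configuration. Case (a) is ruled out directly by $A_{i,j}$: we have $B(X)\cap\Delta_i=\emptyset$ while $L(\omega_\Lambda)\subseteq\Lambda_{R_0}\subseteq\Delta_i$, so $B(X)$ cannot meet $L(\omega_\Lambda)$ at all.

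To rule out case (b), the key auxiliary claim is that at most one $\Lambda$-component of the current intermediate configuration $\omega''=\omega_{\Delta_j}+(\text{some subset of }\omega_{\Delta\setminus\Delta_j})$ meets $\Delta_i^c$. Indeed, any such $\Lambda$-component $\mathcal{C}$ touches $L(\omega_\Lambda)\subseteq\Lambda_{R_0}$, and by $A_{i,j}$ only balls of $\omega_{\Delta_j\setminus\Lambda}$ can reach $\Lambda_{R_0}$; so $\mathcal{C}$ contains some component $C^{(0)}$ of $\omega_{\Delta_j\setminus\Lambda}$ with $L(C^{(0)})\cap\Lambda_{R_0}\neq\emptyset$. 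Because distinct components of $\omega_{\Delta_j\setminus\Lambda}$ are non-adjacent in $\mathcal{G}(\omega_{\Delta_j\setminus\Lambda})$, any path of $\mathcal{G}(\omega''_{\Lambda^c})$ leaving $C^{(0)}$ must immediately step onto an $\omega_{\Delta_j^c}$-ball, which by $A_{i,j}$ lives in $\Delta_i^c$ and is adjacent to a vertex of $C^{(0)}$; so as soon as $\mathcal{C}$ extends beyond $C^{(0)}$ into $\Delta_i^c$ (or already does so inside $C^{(0)}$), one obtains $L(C^{(0)})\cap\Delta_i^c\neq\emptyset$. Hypothesis $W_{i,j}$ then identifies $C^{(0)}$ as the unique component of $\omega_{\Delta_j\setminus\Lambda}$ meeting both $\Lambda_{R_0}$ and $\Delta_i^c$, proving the claim. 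Since $B(X)\subseteq\Delta_i^c$, every $\Lambda$-component reached by $X$ must meet $\Delta_i^c$, so $X$ reaches at most one such $\Lambda$-component and case (b) is excluded.

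The main obstacle is precisely this combinatorial claim: $W_{i,j}$ is phrased on the finite restriction $\omega_{\Delta_j\setminus\Lambda}$, yet one needs to apply it at every intermediate configuration $\omega''$ appearing in the induction. The transfer works because all added points live in $\Delta_i^c$: they cannot create new contacts with $\Lambda_{R_0}$, they can only extend contacts already carried by $\omega_{\Delta_j\setminus\Lambda}$—which is exactly why $W_{i,j}$ remains the right counting hypothesis.
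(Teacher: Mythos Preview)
Your proof is correct and follows essentially the same approach as the paper's own proof: use $A_{i,j}$ to forbid balls outside $\Delta_j$ from touching $L(\omega_\Lambda)\subseteq\Lambda_{R_0}\subseteq\Delta_i$, and use $W_{i,j}$ to forbid them from merging two $\Lambda$-components, so that the defining limit for $N_{cc}^\Lambda$ stabilizes at $n=j$. The paper compresses all of this into three lines, whereas you spell out the one-point-at-a-time induction and the combinatorial transfer of $W_{i,j}$ to the intermediate configurations $\omega''$; your extra care is warranted and fills in details the paper leaves to the reader.
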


\begin{proof}
Since $\omega$ is in $W_{i,j} \cap A_{i,j}$, each balls of $L(\omega_{\Delta_j^c})$ does not hit $L(\omega_{\Lambda})$ and does not hit two or more $\Lambda$-components of $L(\omega_{\Delta_i})$. Therefore
$$
N_{cc}^\Lambda(\omega) 
= 
N_{cc}(\omega_{\Delta_j}) - N_{cc}(\omega_{\Delta_j \setminus \Lambda})
=
N_{cc}^\Lambda(\omega_{\Delta_j}).
$$
\end{proof}

The probability of the events $(W_{i,j})$ and $(A_{i,j})$ now have to be controlled. Involving the events $W_{i,j}$  we have the following proposition.

\begin{propo}\label{propo.proba.Wij}
\begin{equation} \label{limite}
\underset{i \to \infty}{\lim}\underset{j \to \infty}{\lim}  \bar{P}(W_{i,j})=1.
\end{equation}
\end{propo}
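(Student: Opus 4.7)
The plan is to pass the two limits successively, reducing \eqref{limite} to a property of $L(\omega_{\Lambda^c})$ that I then derive from the local modification property (Proposition \ref{modiflocal}) combined with the uniqueness of the infinite connected component established in the previous section. Let $\tilde W_i$ denote the event that at most one connected component of $L(\omega_{\Lambda^c})$ meets both $\Lambda_{R_0}$ and $\Delta_i^c$, and let $W_\infty$ denote the event that at most one unbounded component of $L(\omega_{\Lambda^c})$ touches $\Lambda_{R_0}$. Throughout I rely on the fact (which follows from the $d$-moment assumption together with the finite specific entropy of $\bar P$) that only finitely many balls of $\omega$ intersect any fixed bounded set $\bar P$-almost surely; in particular only finitely many components of $L(\omega_{\Lambda^c})$ touch $\Lambda_{R_0}$.

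First limit, $j \to \infty$ at $i$ fixed: every component of $L(\omega_{\Lambda^c})$ realising $\tilde W_i^c$ contains, by connectivity, a finite chain of pairwise intersecting balls joining $\Lambda_{R_0}$ to $\Delta_i^c$, and for $j$ large enough all balls of this chain already lie in $\omega_{\Delta_j \setminus \Lambda}$, hence produce a component of $L(\omega_{\Delta_j \setminus \Lambda})$ witnessing the same connection. Conversely, any two components of $L(\omega_{\Delta_j \setminus \Lambda})$ contained in the same component of $L(\omega_{\Lambda^c})$ are linked in the latter by a finite chain of balls, and therefore merge as soon as $j$ is large enough. These two observations yield $\1_{W_{i,j}} \to \1_{\tilde W_i}$ $\bar P$-almost surely, and dominated convergence gives $\bar P(W_{i,j}) \to \bar P(\tilde W_i)$. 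The events $\tilde W_i^c$ are then decreasing in $i$, and, using once more the finiteness of the number of components touching $\Lambda_{R_0}$, their intersection is exactly $W_\infty^c$; monotone convergence yields $\bar P(\tilde W_i) \to \bar P(W_\infty)$.

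It remains to prove $\bar P(W_\infty) = 1$, which is the crux of the argument. Since $W_\infty^c \in \Fcb_{\Lambda^c}$, if $\bar P(W_\infty^c) > 0$, then Proposition \ref{modiflocal} applied with $A = \{\omega_\Lambda = 0\}$ (an event of $\pi^{z,Q}$-probability $e^{-z|\Lambda|} > 0$) gives $\bar P(A \cap W_\infty^c) > 0$. On $A \cap W_\infty^c$, however, one has $L(\omega) = L(\omega_{\Lambda^c})$, so $L(\omega)$ carries at least two infinite connected components, contradicting the uniqueness result proved in the previous section. Hence $\bar P(W_\infty) = 1$, and chaining the two limits establishes \eqref{limite}. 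The delicate step is the first one: controlling how the components of $L(\omega_{\Delta_j \setminus \Lambda})$ approximate those of $L(\omega_{\Lambda^c})$ relies crucially on the local finiteness of $\omega$ supplied by the $d$-moment hypothesis, which is precisely where this assumption enters the proof of Theorem \ref{bigtheo}.
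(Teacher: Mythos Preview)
Your argument follows the same route as the paper's: reduce the double limit to the statement that $L(\omega_{\Lambda^c})$ has $\bar P$-almost surely at most one infinite connected component, and derive the latter by combining Proposition~\ref{modiflocal} with the uniqueness result of the preceding subsection. Your passage through the limits is in fact more explicit than the paper's (which compresses the first step into a single set identity $\bigcup_i \bigcap_{j>i} W_{i,j} = \{\text{at most one infinite cc of } L(\omega_{\Lambda^c}) \text{ meeting } \Lambda_{R_0}\}$), and your choice $A=\{\omega_\Lambda=0\}$ in the local-modification step makes the contradiction concrete where the paper leaves it implicit.

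The one substantive issue is your closing remark. The $d$-moment hypothesis is \emph{not} ``precisely where'' it enters the proof of Theorem~\ref{bigtheo}: the paper states explicitly, at the end of the uniqueness subsection, that the $d$-moment assumption first appears in Proposition~\ref{propo.proba.Aij} (the control of $A_{i,j}$), and the paper's own proof of the present proposition makes no use of it. This distinction is not cosmetic, since the uniqueness argument is reused in the proof of Theorem~\ref{bigtheo2}, where the radii are non-integrable. Relatedly, your claim that ``only finitely many balls of $\omega$ intersect any fixed bounded set $\bar P$-almost surely'' follows from the $d$-moment assumption together with finite specific entropy is asserted without justification; finite specific entropy controls the density of \emph{centres}, not the reach of the balls, and in the paper this type of control is obtained (in Proposition~\ref{propo.proba.Aij}) via stochastic domination by $\pi^{qz,Q}$, not from entropy.
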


\begin{proof}
We have that
%$$
%\underset{j>i}{\bigcap} W_{i,j}=\{  \omega, L(\omega_{\Lambda^c}) 
%\text{ has at most one cc intersecting } \Lambda_{R_0} 
%\text{ and } \Delta_i^c  \},
%$$
%so
$$
\underset{i \in \N}{\bigcup} 
\underset{j>i}{\bigcap} W_{i,j}=\{  \omega, L(\omega_{\Lambda^c}) 
\text{ has at most one infinite cc intersecting } \Lambda_{R_0}   \}.
$$
Then 
\begin{align} \label{eq.preuve.Wij}
\underset{i \to \infty}{\lim}\underset{j \to \infty}{\lim}  \bar{P}(W_{i,j})
&=
\bar{P}\left( \underset{i \in \N}{\bigcup} 
\underset{j>i}{\bigcap} W_{i,j} \right)
\nonumber
\\ & \geq
\bar{P} ( L(\omega_{\Lambda^c}) \text{ has at most one infinite cc } )
=1.
\end{align}
To prove the last equality in \eqref{eq.preuve.Wij}, let suppose that with positive probability $L(\omega_{\Lambda^c})$ has at least two infinite connected components, then using the local modification result (\ref{modiflocal}),
\begin{align}
\bar{P}(L(\omega) \text{ has at least two infinite connected components})>0,
\nonumber
\end{align}
 which is a direct contradiction of \ref{propo.unicité}.
 
\end{proof}

The control of the probability of $A_{i,j}$ is a bit harder to obtain. Since $A_{i,j}$ is not a local event, the probabilities 
$\mu_n^{\Lambda}(A_{i,j})$ need to be controlled uniformly on $n$.

\begin{propo}\label{propo.proba.Aij} Under the assumptions of \ref{bigtheo}, meaning bounded radii or $q\ge 1$ and $\int_{\R^+} R^d Q(dR)< + \infty$, then for all $i\ge 1$ 
$$
 \underset{j \to \infty}{\lim} \max (\bar{P}(A^c_{i,j}),\sup_n \mu_n^{\Lambda}(A^c_{i,j})) =0.
 $$
\end{propo}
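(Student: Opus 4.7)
My plan is to bound $A^c_{i,j}$ by a first-moment argument based on a uniform intensity estimate, splitting according to the two hypotheses of \ref{bigtheo}.

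\emph{Bounded-radii case.} If $Q$ is supported in $[0, R_0]$, then for every $j > i + R_0$, any point $(x, R) \in \omega$ with $x \in \Delta_j^c$ satisfies $d(x, \Delta_i) \geq j - i > R_0 \geq R$, so $B(x, R) \cap \Delta_i = \emptyset$. Since every radius is almost surely at most $R_0$ under both $\bar P$ and each $\mu_n^\Lambda$, we get $\bar P(A^c_{i,j}) = \mu_n^\Lambda(A^c_{i,j}) = 0$ once $j > i + R_0$, and the conclusion is trivial. This case also covers $q < 1$.

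\emph{Case $q \geq 1$ with $\int R^d Q(dR) < \infty$.} First I would establish that the intensity measure of $\bar P$ and of $\mu_n^\Lambda$ (uniformly in $n$) is dominated by $zq\, \lambda^{(d)} \otimes Q$. For the finite-volume measure $P_n$, the Slivnyak-Mecke identity applied to the density $q^{N_{cc}}/Z_n$ gives, for any Borel $B \subseteq \Lambda_n$ and $C \subseteq \R^+$,
$$E_{P_n}[\omega(B \times C)] = \frac{z}{Z_n} \int_B \int_C E_{\pi^{z,Q}_{\Lambda_n}}\bigl[q^{N_{cc}(\omega + \delta_{(x,R)})}\bigr]\, dx\, Q(dR).$$
Since adding a single ball can increase the component count by at most one, $N_{cc}(\omega + \delta_{(x,R)}) \leq N_{cc}(\omega) + 1$, so for $q \geq 1$ the right-hand side is at most $zq|B|Q(C)$. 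This bound transfers directly to $\bar P_n$ and $\mu_n^\Lambda$ (both are sums of spatial translates of $P_n$ with total weight at most one), and passes to the limit $\bar P$ using $\bar P_n \to \bar P$ in the local topology, applied to the bounded local truncations $\omega \mapsto \omega(B \times C) \wedge M$ and followed by $M \to \infty$ via monotone convergence.

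Next, from $\mathbf{1}_{A^c_{i,j}}(\omega) \leq \sum_{(x,R) \in \omega,\, x \in \Delta_j^c} \mathbf{1}_{B(x,R) \cap \Delta_i \neq \emptyset}$ and the intensity estimate,
$$\bar P(A^c_{i,j}) \leq zq \int_{\Delta_j^c} Q\bigl([d(x, \Delta_i), \infty)\bigr)\, dx.$$
For $x \in \Delta_j^c$, $d(x, \Delta_i) \geq |x|_\infty - i$, so only radii $R \geq j - i$ contribute; moreover $\{x \in \Delta_j^c : R \geq d(x, \Delta_i)\}$ lies inside the box $\Delta_{i+R}$ of volume $(2(i+R))^d$. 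Fubini then yields
$$\bar P(A^c_{i,j}) \leq zq \int_{(j-i,\infty)} (2(i+R))^d\, Q(dR),$$
which vanishes as $j \to \infty$ by the $d$-moment assumption on $Q$. The identical bound applies to $\sup_n \mu_n^\Lambda(A^c_{i,j})$, completing the proof in this case.

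The hard part I anticipate is transferring the intensity bound from the pre-limit measures $\bar P_n$ to the limit $\bar P$: the counting functional $\omega \mapsto \omega(B \times C)$ is neither bounded nor a test function for the local topology. The monotone truncation $\omega(B \times C) \wedge M$ sidesteps this obstacle, but only because the Poisson-type upper bound is already available uniformly at the pre-limit level via Slivnyak-Mecke and the fact that inserting a ball alters $N_{cc}$ by at most $+1$.
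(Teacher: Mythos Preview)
Your proof is correct and the bounded-radii case is identical to the paper's. In the integrable case the two arguments diverge in an interesting way: the paper invokes the stochastic-domination theorem of Georgii--K\"uneth (the same $+1$ bound on $N_{cc}(\omega+\delta_X)-N_{cc}(\omega)$ that you exploit) to obtain $P_n \preceq \pi^{zq,Q}$ at the level of \emph{distributions}, and then simply observes that $A^c_{i,j}$ is an increasing event, so $\mu_n^\Lambda(A^c_{i,j}) \le \pi^{zq,Q}(A^c_{i,j})$. The passage to $\bar P$ is handled by truncating the event itself, via the local approximations $A_{i,j,k}=\{\omega: \forall (x,R)\in\omega_{\Delta_j^c\cap\Delta_k},\ B(x,R)\cap\Delta_i=\emptyset\}$, rather than by truncating a counting functional. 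You instead stay at the level of \emph{intensity measures}: Slivnyak--Mecke plus the $+1$ bound gives the uniform intensity domination $\rho \le zq\,\lambda^{(d)}\otimes Q$, after which a Markov/first-moment inequality and the explicit Fubini integral finish the job. Your route is more elementary in that it avoids importing the stochastic-comparison machinery, at the cost of a slightly more hands-on computation; the paper's route is cleaner in that once domination is established the bound $\pi^{zq,Q}(A^c_{i,j})\to 0$ follows from the standard fact that a Boolean model with $d$-integrable radii has almost surely finitely many balls hitting any bounded set. Both arguments ultimately pivot on the same inequality $N_{cc}(\omega+\delta_X)\le N_{cc}(\omega)+1$.
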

 \begin{proof}

The case of bounded radii is quite simple since for $j>i + R_0$, $\bar{P}(A^c_{i,j})=0$ and $\mu_n^{\Lambda}(A^c_{i,j})=0$. In the case $q \geq 1$ and $\int R^d Q(dR) < + \infty$, we use  stochastic comparison results in \cite{gk} to compare $\mu_n^{\Lambda}$ with respect to $\pi$. Recall standard definitions on stochastic domination for point processes. An event $A$ is called increasing if for any $X=(x,R)$ and any configuration $\omega$, then $\omega + \delta_X\in A$ as soon as $\omega\in A$. If $\mu$ and $\nu$ are two probability measures on $\Ocb$, we say that $\nu$ dominates $\mu$ if $\mu(A) \leq \nu(A)$ for all increasing set $A$.

For any $(x,R)$ and any finite configuration $\omega$ the difference $N_{cc}(\omega+\delta_{(x,R)})-N_{cc}(\omega)$ is at most one. Therefore, thanks to Theorem 1.1 in \cite{gk}, $P_n$ is stochastically dominated by $\pi^{zq,Q}$, and for any increasing event $A$ we have $\mu_n^\Lambda (A) \le \pi^{qz,Q}(A)$.
Since the event  $A_{i,j}^c$ is increasing, we have the inequality 
 
 \begin{equation}\label{eq1}
 \mu_n^\Lambda (A^c_{i,j}) \le \pi^{qz,Q}(A^c_{i,j}).
 \end{equation}
 
In considering the events
$$ A_{i,j,k}
=
\{
\omega \in \Ocb, \ \forall (x,R) \in \omega_{\Delta_j^c\cap \Delta_k},
B(x,R) \cap \Delta_i = \emptyset 
\} $$
we have 

\begin{equation}\label{eq2}
 \bar P(A_{i,j}^c)=\lim_k \bar{P}(A_{i,j,k}^c)= \lim_k \lim_n \mu_n^\Lambda (A_{i,j,k}^c) \le \lim_k \lim_n \mu_n^\Lambda (A_{i,j}^c)\le \pi^{qz,Q}(A^c_{i,j}).
 \end{equation}

It is well-known that the number of balls in a Poisson boolean model (with intensity measure $m=z\lambda^{(d)}\otimes Q$) which intersects a bounded set $\Delta$ is a Poisson random variable with parameter $z\int( \lambda^{(d)} (\Delta\oplus B(0,R))Q(dR)$ (See \cite{chiu2013} for instance). Since $\int_{\R^+} R^d Q(dR) < + \infty$, this parameter is finite and the random variable is almost surely finite. We deduce that $ \underset{ j \to \infty}{\lim} \pi^{zq,Q}(A_{i,j}^c) = 0$ and the \ref{propo.proba.Aij} follows from \eqref{eq1} and \eqref{eq2}.

 \end{proof}

We are in position to prove the DLR($\Lambda$) equation. Consider the quantity $$
\delta 
= 
\Big{|} \int_{\Ocb} f d \bar{P} - \int_{\Ocb} \int_{\Ocb}
f(\omega'_{\Lambda} + \omega_{\Lambda^c} ) 
\frac 1{Z_{\Lambda}(\omega_{\Lambda^c})} q^{N_{cc}^\Lambda( \omega'_{\Lambda} + \omega_{\Lambda^c})}
\pi_{\Lambda}(d\omega') \bar{P}(d\omega)
\Big{|}$$
where $f$ is also assumed to be bounded by $1$. Let us show that $\delta$ is arbitrary smaller than any $\epsilon>0$. 

By \ref{propo.proba.Wij} and \ref{propo.proba.Aij} we choose $i<j$ large enough such that $\bar{P}(A_{i,j}^c \cup W_{i,j}^c ) \leq \epsilon$ and $\mu_n^{\Lambda}(A_{i,j}^c) \leq \epsilon$ for all $n$. So

\begin{align*}
\delta \leq
\Big{|} \int_{\Ocb} f d \bar{P} - \int_{\Ocb} \int_{\Ocb} 
\1_{A_{i,j} \cap W_{i,j}}(\omega_{\Lambda^c})
f(\omega'_{\Lambda} + \omega_{\Lambda^c} ) 
\frac {q^{N_{cc}^{\Lambda}( \omega'_{\Lambda} + \omega_{\Lambda^c})}}{Z_{\Lambda}(\omega_{\Lambda^c})} 
\pi_{\Lambda}(d\omega') \bar{P}(d\omega)
\Big{|} + \epsilon,
\end{align*}
thanks to \ref{propo.localisation.H}
\begin{align*}
\delta & \leq
\Big{|} \int_{\Ocb} f d \bar{P} - \int_{\Ocb} \int_{\Ocb} 
\1_{A_{i,j} \cap W_{i,j}}(\omega_{\Lambda^c})
f(\omega'_{\Lambda} + \omega_{\Lambda^c} ) 
\frac  {q^{N_{cc}^{\Lambda}( \omega'_{\Lambda} + \omega_{\Delta_j \setminus \Lambda}  ) }}{Z_{\Lambda}(\omega_{\Delta_j \setminus \Lambda})}
\pi_{\Lambda}(d\omega') \bar{P}(d\omega)
\Big{|} + \epsilon
\\ &\leq
\Big{|} \int_{\Ocb} f d \bar{P} - \int_{\Ocb} \int_{\Ocb} 
\1_{W_{i,j}}(\omega_{\Lambda^c})
f(\omega'_{\Lambda} + \omega_{\Lambda^c} ) 
\frac{q^{N_{cc}^{\Lambda}( \omega'_{\Lambda} + \omega_{\Delta_j \setminus \Lambda}  ) }}
{Z_{\Lambda}(\omega_{\Delta_j \setminus \Lambda})}
\pi_{\Lambda}(d\omega') \bar{P}(d\omega)
\Big{|} + 2 \epsilon.
\end{align*}
But since $\1_{W_{i,j}}$ is a local function, we can use the local convergence of $\mu_n^{\Lambda}$ to $\bar{P}$. So for a $n$ large enough (which depends on $i$ and $j$ fixed) we have
\begin{align*}
\delta  
&\leq
\Big{|} \int_{\Ocb} f d \mu_n^{\Lambda} - \int_{\Ocb} \int_{\Ocb} 
\1_{W_{i,j}}(\omega_{\Lambda^c})
f(\omega'_{\Lambda} + \omega_{\Lambda^c} ) 
\frac{q^{N_{cc}^{\Lambda}( \omega'_{\Lambda} + \omega_{\Delta_j \setminus \Lambda}  ) }}
{Z_{\Lambda}(\omega_{\Delta_j \setminus \Lambda})}
\pi_{\Lambda}(d\omega') \mu_n^{\Lambda}(d\omega)
\Big{|} + 3 \epsilon
\\ & \leq
\Big{|} \int_{\Ocb} f d \mu_n^{\Lambda} - \int_{\Ocb} \int_{\Ocb} 
f(\omega'_{\Lambda} + \omega_{\Lambda^c} ) 
\1_{A_{i,j} \cap W_{i,j}} (\omega_{\Lambda^c})
\frac{q^{N_{cc}^{\Lambda}( \omega'_{\Lambda} + \omega_{ \Lambda^c}  ) }}
{Z_{\Lambda}(\omega_{\Lambda^c})}
\pi_{\Lambda}(d\omega') \mu_n^{\Lambda}(d\omega)
\Big{|} + 4 \epsilon
\\ & \leq
\Big{|} \int_{\Ocb} f d \mu_n^{\Lambda} - \int_{\Ocb} \int_{\Ocb} 
\1_{W_{i,j}}
f(\omega'_{\Lambda} + \omega_{\Lambda^c} ) 
\frac{q^{N_{cc}^{\Lambda}( \omega'_{\Lambda} + \omega_{ \Lambda^c}  ) }}
{Z_{\Lambda}(\omega_{\Lambda^c})}
\pi_{\Lambda}(d\omega') \mu_n^{\Lambda}(d\omega)
\Big{|} + 5 \epsilon.
\end{align*}
By \ref{propo.convlocal.DLR} $\mu_n^{\Lambda}$ satisfies the  DLR($\Lambda$) equation and so we get
\begin{align*}
\delta
& \leq 
5 \epsilon + \mu_n^{\Lambda}(W_{i,j}^c)
\leq 6 \epsilon.
\end{align*}

\section{Proof of \ref{bigtheo2}}\label{section.preuve.theo2}
In the proof of \ref{bigtheo}, the assumptions of bounded radii or integrable radii are important to localize the local conditional densities, using comparison tools. This proof can not be adapted to the case of non integrable radii and we turn to  another strategy based on a Fortuin-Kasteleyn representation of the CRCM via  a Widom-Rowlinson model. 
In Section \ref{section.def.WR} the Widom-Rowlinson model is defined as a random balls model with $q$ different colours such that balls with different colours are not allowed to overlap. In Section \ref{section.preuve.passage.WR.CRCM} we show that a colour blind Widom-Rowlinson model (i.e. the colours are forgotten) is a CRCM. The uniqueness of the infinite connected component is required in this identification. In the last Section \ref{section.existence.WR} we show the existence of a Widom-Rowlinson model having this uniqueness property and such that for $z$ small enough it has almost surely at least two different colours.  This ensures that the associated CRCM is different to the Poisson process and the \ref{bigtheo2} is proved.

\subsection{Widom-Rowlinson model}
\label{section.def.WR}
 Starting now $q$ is an integer larger than 1 and is the number of colours in the model. 
 Let $\Sc$ denote the new state space $\R^d \times \R^+ \times \{1, \dots ,q \}$. $\Oc$ is the set of coloured configurations, embedded with the classical $\sigma$-algebra $\Fc$. To avoid confusions we write $\tilde \omega$ for a coloured configuration and $\tpi=\tpi^{z,Q,q}$ denotes the law of a Poisson point process on $\Oc$ with intensity measure 
 $\tilde{m}= z \lambda^{(d)} \otimes Q \otimes \mathcal{U}_q$ where $\mathcal{U}_q$ stands for the uniform law on the set $\{1, \dots , q \}$. 
 As before, for a subset $\Lambda$ of $\R ^d$, $\tilde{\omega}_{\Lambda}$, $\Fc_{\Lambda}$ and $\tpi_{\Lambda}$ are the restrictions on the set $\Lambda \times \R^+ \times \{1, \dots , q\}$ of the respective  objects. 
The set of authorized configurations $\A$ is defined as followed.
$$
\A = \{ 
\tilde{\omega} \in \Oc, \forall (x,R,k),(x',R',k') \in \tilde{\omega}, \ k \not = k' \Rightarrow |x-x'| > R+R'
\}.
$$
In this set two balls of different colours do not overlap.
\begin{defi}\label{defi.WR}
A probability measure $\mu$ on $\Oc$ is a Widom-Rowlinson model for parameters $z$, $Q$ and $q$ (WR($z,Q,q$)) if it satisfies the two following properties.
\begin{itemize}
\item
$\mu(\A)=1$.
\item DLR equations :
For all bounded set $\Lambda$, for all bounded functions $f$
$$
\int_{\Oc} f \ d \mu
=
\int_{\Oc} \int_{\Oc}
f( \tilde{\omega}'_{\Lambda} + \tilde{\omega}_{\Lambda^c}  )
\frac{\1_{\A} ( \tilde{\omega}'_{\Lambda} + \tilde{\omega}_{\Lambda^c}  )}
{ \tilde{Z}_{\Lambda}(\tilde{\omega}_{\Lambda^c})   }
\tpi_{\Lambda}(d\tomega') \mu( d \tomega ),
$$
where 
$ \tilde{Z}_{\Lambda}(\tilde{\omega}_{\Lambda^c})
=
\int_{\Oc} 
\1_{\A} ( \tilde{\omega}'_{\Lambda} + \tilde{\omega}_{\Lambda^c}  )
\tpi_{\Lambda}(d\tomega')
$.
\end{itemize}
\end{defi}

In the case of deterministic radii, the Widom-Rowlinson model have been first introduced in \cite{Widom70} and studied in \cite{cck,ghm} for example. Those papers are mainly devoted to the case of deterministic radii but can be easily extended to the case of bounded random radii. Here we investigate the more complicated non integrable case $\int_{\R^+} R^d Q(dR) = + \infty$. 
Note that the Poisson point process $\pi^{z/q,Q}$ coloured by a single colour is a WR($z,Q,q$).
In the following we are interested in a mixed phase which ensures almost surely the simultaneous existence of at least two colours.

\subsection{The Fortuin-Kasteleyn representation}
\label{section.preuve.passage.WR.CRCM}

First let us introduce the notions of colour-blind measure and colouration kernel.
\begin{defi}
From a probability measure $\mu$ on $(\Oc, \Fc)$ we define the two following quantities :
\begin{itemize}
\item The colour-blind probability measure $\mu_{cb}$ on $(\Ocb, \Fcb)$ by $\mu_{cb}(B)= \mu (B \times \{1, \dots ,q \} )$ for each $B\in\Fcb$.

\item The colouration kernel $C_{\mu}$ on $\Fc \times \Ocb$ defined by  $C_{\mu}(A |.)= \E_{\mu} [\1_A | \Fcb  ] $, where $\Fcb$ is consider here as a sub $\sigma$-algebra of $\Fc$.
\end{itemize}
\end{defi}

In other words, $\mu_{cb}$ is the law of the random balls model $\mu$ where the colours are forgotten and $C_\mu$ is the
kernel which gives the distributions of colours for the random balls model $\mu$ given the configuration of balls. The relation between the CRCM and the Widom-Rowlinson model is given in the following proposition.

\begin{propo}\label{propo:passage:wr:crcm}
If $\mu$ is a WR($z,Q,q$) with at most one infinite connected component, then the colour-blind measure $\mu_{cb}$ is a CRCM($z/q,Q,q$).
\end{propo}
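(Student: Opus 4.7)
The plan is to verify the CRCM$(z/q, Q, q)$ DLR equations for $\mu_{cb}$ directly, using the WR DLR equation for $\mu$ as input. I would fix a bounded $\Lambda \subset \R^d$ and a bounded colour-blind test function $f = f(\omega)$, and apply the WR DLR equation to $\tilde f(\tomega) := f(\omega)$ to obtain
\begin{equation*}
\int f\,d\mu_{cb} = \int \int f(\omega'_\Lambda + \omega_{\Lambda^c}) \frac{\1_\A(\tomega'_\Lambda + \tomega_{\Lambda^c})}{\tilde Z_\Lambda(\tomega_{\Lambda^c})}\tpi^{z,Q,q}_\Lambda(d\tomega')\mu(d\tomega).
\end{equation*}
Writing $\tpi^{z,Q,q}_\Lambda$ as $\pi^{z,Q}_\Lambda$ tensored with i.i.d.\ uniform colours and summing over the colourings of $\omega'_\Lambda$ (which leave $f$ invariant), the sum of $\1_\A$ over such colourings equals $q^{N^\circ_\Lambda(\omega'_\Lambda,\omega_{\Lambda^c})}\,\1_{\mathrm{cons}}(\tomega_{\Lambda^c},\omega'_\Lambda)$, where $N^\circ_\Lambda$ counts the connected components of $L(\omega'_\Lambda+\omega_{\Lambda^c})$ containing only balls from $\omega'_\Lambda$, and $\1_{\mathrm{cons}}$ is the indicator that for every \emph{mixed} component (one containing balls from both sides) the outside $\omega_{\Lambda^c}$-components it merges all share a common colour.

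The next step is to decompose $\mu = \mu_{cb}\otimes C_\mu$ and integrate $\tomega_{\Lambda^c}$ against $C_\mu(\cdot|\omega_{\Lambda^c})$. This is where the at-most-one-infinite-cc hypothesis becomes essential. Combined with the colour-permutation symmetry of the WR DLR kernel inside any bounded window, it yields the infinite-volume Fortuin--Kasteleyn marginal identity: for $\mu_{cb}$-a.e.\ $\omega_{\Lambda^c}$, $C_\mu(\cdot|\omega_{\Lambda^c})$ is supported on valid colourings of $L(\omega_{\Lambda^c})$ and has density proportional to $\tilde Z_\Lambda(\tomega_{\Lambda^c})$ with respect to the product-uniform law $U(\cdot|\omega_{\Lambda^c})$ on such colourings. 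The finite-volume version of this identity follows at once from computing the marginal of $\tomega_{\Lambda^c}$ under finite-volume WR; the uniqueness hypothesis allows the extension to infinite volume.

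The remainder is algebraic. Integrating against $C_\mu(\cdot|\omega_{\Lambda^c})$ cancels the $\tilde Z_\Lambda$ in the denominator against the $\tilde Z_\Lambda$-weighting, reducing the inner integral to $q^{N^\circ_\Lambda}\,\mathcal N(\omega_{\Lambda^c})^{-1}\int\1_{\mathrm{cons}}\,U(d\tomega_{\Lambda^c})$, where $\mathcal N(\omega_{\Lambda^c}) := \int \tilde Z_\Lambda(\tomega_{\Lambda^c})\,U(d\tomega_{\Lambda^c})$. Since each connected component of $L(\omega_{\Lambda^c})$ receives an independent uniform colour under $U$, the integral of $\1_{\mathrm{cons}}$ factors as $\prod_c q^{1-m_c}$, where $m_c$ is the number of outside components merged by mixed component $c$. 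Classifying the connected components of $L(\omega'_\Lambda + \omega_{\Lambda^c})$ into pure-inside, pure-outside and mixed gives directly from \eqref{definitionlimite} the combinatorial identity $N_{cc}^\Lambda(\omega) = N^\circ_\Lambda - \sum_c(m_c - 1)$, so $q^{N^\circ_\Lambda}\prod_c q^{1-m_c} = q^{N_{cc}^\Lambda(\omega)}$. A Fubini computation identifies $\mathcal N(\omega_{\Lambda^c}) = e^{-(z-z/q)|\Lambda|}\,Z_\Lambda(\omega_{\Lambda^c})$ with the CRCM$(z/q)$ partition function, while the Radon--Nikodym identity $d\pi^{z,Q}_\Lambda/d\pi^{z/q,Q}_\Lambda = e^{-(z-z/q)|\Lambda|}q^{\omega(\Lambda)}$ absorbs the $(1/q)^{\omega'(\Lambda)}$ factor from the colour sum, recovering the CRCM$(z/q, Q, q)$ DLR equation for $\mu_{cb}$.

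The main obstacle will be the infinite-volume extension of the Fortuin--Kasteleyn marginal identity for $C_\mu$. Any finite connected component of $L(\omega_{\Lambda^c})$ can be colour-swapped by a local modification in a bounded window around its neighbourhood, so its conditional colour law is pinned down by WR DLR symmetry. The only potential obstruction comes from infinite clusters, and the at-most-one-infinite-cc hypothesis is precisely what rules out the subtle long-range colour couplings between distant components that could otherwise break the product structure of $U$ and the cancellation with $\tilde Z_\Lambda$.
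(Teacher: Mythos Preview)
The paper takes a different and more elementary route: rather than verifying DLR directly, it passes to the equivalent GNZ (Georgii--Nguyen--Zessin) characterisation, which reduces everything to a one-point computation. The only colouring fact needed is that under $C_\mu(\cdot\,|\,\omega)$ each \emph{finite} connected component of $L(\omega)$ receives an independent uniform colour, independent also of the colour of the (at most one) infinite component. When a single ball $B(x,R)$ touching $j$ components of $L(\omega)$ is added with a uniformly random colour $k$, one computes $\frac{1}{q}\sum_k\int\1_\A(\tomega+\delta_{(X,k)})\,C_\mu(d\tomega\,|\,\omega)=q^{-j}$ \emph{regardless} of the infinite component's colour law, because that law simply sums to $1$. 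Since $1-j=N_{cc}^\Lambda(\omega+\delta_X)-N_{cc}^\Lambda(\omega)$, the CRCM$(z/q,Q,q)$ GNZ equation drops out in two lines.

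Your DLR approach hinges on the ``FK marginal identity'': that the conditional colour law on $\omega_{\Lambda^c}$, given only the uncoloured $\omega_{\Lambda^c}$, has density $\propto\tilde Z_\Lambda(\tomega_{\Lambda^c})$ with respect to the product-uniform law $U$ on the components of $L(\omega_{\Lambda^c})$. This holds in finite volume, but it is \emph{not} a consequence of the WR DLR equations together with the at-most-one-infinite-component hypothesis, and it fails in general. Concretely, take $\mu$ to be any WR phase in which the unique infinite component carries a deterministic colour---for instance the monochromatic Poisson process $\pi^{z/q,Q}$ coloured entirely red in the non-integrable setting, which the paper itself notes is a WR$(z,Q,q)$. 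For this $\mu$ the infinite component of $L(\omega_{\Lambda^c})$ is red with probability one, whereas your claimed law $\propto\tilde Z_\Lambda\cdot U$ is invariant under global colour permutations and hence assigns it uniform marginal colour. The cancellation of the $\tilde Z_\Lambda$ in your denominator against the claimed $\tilde Z_\Lambda$-weighting therefore breaks, and with it the identification $\mathcal N(\omega_{\Lambda^c})=e^{-(z-z/q)|\Lambda|}Z_\Lambda(\omega_{\Lambda^c})$. The DLR route can be repaired by using only the weaker fact the paper uses---independent uniform colours on finite components, arbitrary law on the infinite one---and verifying that the unknown infinite-component colour still integrates out of the DLR integrand; but that is precisely the mechanism that makes the GNZ computation clean, and it is the step you have not carried out.
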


This result is well know in finite volume and has been used to give phase transition result in \cite{cck} for the Widom-Rowlinson model and in \cite{gh} for the larger class of continuum Potts models. Note also that the standard Fortuin-Kasteleyn representation gives a colouration procedure on the CRCM($z/q,Q,q$) in order to recover a  WR($z,Q,q$). We ommit this part here. Note also that no assumption in \ref{propo:passage:wr:crcm} is required on $Q$ or $z$.

\begin{proof} [Proof of \ref{propo:passage:wr:crcm}]

One can carry out the proof  using the DLR equations, but it turns out it is slightly easier using the equivalent definition of Gibbs measures via the GNZ equations \cite{NZ}.

\begin{lemme}[GNZ equation]\label{lemme.gnz}
A probability measure $\mu$ on $\Oc$ is a WR($z,Q,q$) if and only if, for all measurable bounded $F$ 
\begin{align}
\int_{\Oc} \underset{\tilde{X} \in \tomega}{\sum} F(\tomega - \delta_{\tilde{X}}, \tilde{X}) \mu(d\tomega)
=
\int_{\Oc} \int_{\Sc} F(\tomega, \tilde{X}) \1_{\A}( \tomega + \delta_{\tilde{X}}) \tilde{m}(d\tilde{X}) \mu(d \tomega),  \label{eq.GNZ.WR}
\end{align}
where $\tilde{X}= (x,R,k)$ and $\tilde{m}= z\lambda^d \otimes Q \otimes \mathcal{U}_q$.

A probability measure $\nu$ on $\Ocb$ is a CRCM($z,Q,q$) if and only if, for all measurable bounded $F$ 
\begin{align}
\int_{\Ocb} \underset{X \in \omega}{\sum} F(\omega - \delta_X, X) \nu(d\omega)
=
\int_{\Ocb} \int_{\Scb} F(\omega, X) q^{N_{cc}^{\Lambda}(\omega + \delta_X)-N_{cc}^{\Lambda} (\omega) } m(dX) \nu(d \omega), \label{eq.GNZ.CRCM}
\end{align}
where $X= (x,R)$, $m= z\lambda^d \otimes Q $ and where $\Lambda$ is any bounded subset of $\R ^d$ containing $x$.
\end{lemme}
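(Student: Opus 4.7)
The lemma is a standard restatement of the Georgii-Nguyen-Zessin characterization for the two interactions in play, so the plan is to verify the equivalence directly from the DLR definitions via the Mecke-Slivnyak formula for the reference Poisson process. I will treat the two equivalences in parallel; the Widom-Rowlinson case is a clean prototype, and the CRCM case follows by replacing the hard-core indicator $\mathbf{1}_{\A}$ by the density $q^{N_{cc}^\Lambda}$.

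For the direction DLR $\Rightarrow$ GNZ for the WR model, I would fix a bounded Borel $\Lambda\subset\R^d$ and, by linearity and monotone convergence, reduce to test functions $F(\tomega,\tilde X)$ that vanish unless the spatial coordinate of $\tilde X$ lies in $\Lambda$. Splitting the sum in the left-hand side of \eqref{eq.GNZ.WR} into points inside $\Lambda$, applying the DLR equation of \ref{defi.WR} to swap $\mu(d\tomega)$ for $\tpi_\Lambda(d\tomega')\mu(d\tomega)$ with weight $\mathbf{1}_{\A}/\tilde Z_\Lambda$, and then invoking the Mecke-Slivnyak identity for the Poisson process $\tpi_\Lambda$, the sum over points of $\tomega'_\Lambda$ becomes an integral against $\tilde m_\Lambda$. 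The integrand then contains $\mathbf{1}_\A(\tomega'_\Lambda+\delta_{\tilde X}+\tomega_{\Lambda^c})$, which we factor as $\mathbf{1}_\A(\tomega'_\Lambda+\tomega_{\Lambda^c})\,\mathbf{1}_\A(\tomega+\delta_{\tilde X})$ once we relabel $\tomega=\tomega'_\Lambda+\tomega_{\Lambda^c}$ (this monotone factorization is the key algebraic identity and uses that adding a point cannot restore admissibility). A second application of DLR collapses the $\tpi_\Lambda$ integral and produces exactly the right-hand side of \eqref{eq.GNZ.WR}. Letting $\Lambda\uparrow\R^d$ handles general $F$.

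For the CRCM case one argues identically, this time with density $q^{N_{cc}^\Lambda(\omega'_\Lambda+\omega_{\Lambda^c})}/Z_\Lambda(\omega_{\Lambda^c})$. The analogous algebraic identity is
\[
q^{N_{cc}^\Lambda(\omega+\delta_X)}=q^{N_{cc}^\Lambda(\omega)}\cdot q^{N_{cc}^\Lambda(\omega+\delta_X)-N_{cc}^\Lambda(\omega)},
\]
which turns the Mecke-Slivnyak step into the appearance of the exponent $N_{cc}^\Lambda(\omega+\delta_X)-N_{cc}^\Lambda(\omega)$ as claimed. The compatibility relation \eqref{compatibility} is what guarantees that this increment is independent of the choice of bounded $\Lambda\ni x$ used to localize, so the formulation of \eqref{eq.GNZ.CRCM} makes sense. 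For the converse direction GNZ $\Rightarrow$ DLR, the cleanest route is to invoke the general equivalence theorem of Nguyen-Zessin \cite{NZ}, which says that a point process is characterized by its Papangelou conditional intensity; here that intensity is precisely $\mathbf{1}_\A(\tomega+\delta_{\tilde X})$ (resp.\ $q^{\Delta N_{cc}^\Lambda}$), so the DLR specification of \ref{defi.WR} (resp.\ \eqref{DLR}) is recovered. Alternatively, one can reverse the computation above by testing \eqref{eq.GNZ.WR} on functions $F$ built from a fixed outside configuration $\tomega_{\Lambda^c}$ times a sum over $\tomega'_\Lambda$.

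The main obstacle I anticipate is bookkeeping rather than analytic: I must make sure the increment $N_{cc}^\Lambda(\omega+\delta_X)-N_{cc}^\Lambda(\omega)$ is a bona fide local function of the pair $(\omega,X)$ with $x\in\Lambda$, independent of $\Lambda$, and that the DLR integrability conditions (the $\pi^{z,Q}_\Lambda$-integrability ensured by \ref{propo.convlocal.DLR} and the bounds of \ref{borneimportante}) justify Fubini and the application of Mecke-Slivnyak. Given \eqref{compatibility} and the uniform bounds $K-\omega(\Delta_{R_0}\setminus\Lambda)\le N_{cc}^\Lambda(\omega)\le\omega(\Lambda)$, these technicalities do not cause trouble, and the equivalence reduces to the algebraic manipulations described above.
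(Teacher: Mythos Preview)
The paper does not actually prove this lemma; it is stated without proof inside the proof of \ref{propo:passage:wr:crcm}, with a pointer to the Nguyen--Zessin paper \cite{NZ} for the general equivalence between DLR and GNZ formulations. Your sketch is a correct expansion of that standard argument specialized to the two interactions at hand, and the key algebraic identities you isolate (the hereditary factorization $\mathbf{1}_\A(\tomega+\delta_{\tilde X})=\mathbf{1}_\A(\tomega)\,\mathbf{1}_\A(\tomega+\delta_{\tilde X})$ and the trivial multiplicative splitting of $q^{N_{cc}^\Lambda}$) are exactly what makes the Mecke--Slivnyak computation go through. One small comment: your remark that the increment $N_{cc}^\Lambda(\omega+\delta_X)-N_{cc}^\Lambda(\omega)$ is a ``local function'' is not quite right in this model (the whole point of the paper is that it is \emph{not} local in general), but independence of the choice of $\Lambda\ni x$ via \eqref{compatibility} is all that is needed, and you do state that correctly.
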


We need a standard result which describes the colouration of the finite connected components in a WR($z,Q,q$).

\begin{lemme}\label{lemme:couleur:WR:ccfini}
Let $\mu$ be a WR($z,Q,q$). For $\mu$ almost all $\tilde \omega$ the colour of a given finite connected component in $C_\mu(.|\omega)$  is independent of the colours of all other finite or infinite connected components, and its law is uniform on $\{1,\ldots,q\}$.
\end{lemme}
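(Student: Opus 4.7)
The plan is to invoke the DLR equation for $\mu$ on a bounded window $\Lambda$ chosen large enough to isolate the finite component, and then exploit the fact that under $\tpi$ the colours are i.i.d.\ uniform on $\{1,\dots,q\}$, so that conditionally on the ball positions the only constraint on the colouring is the non-overlap rule encoded by $\A$.

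Concretely, I would fix $\tomega$ and a finite connected component $C$ of $L(\omega)$, where $\omega$ is the colour-blind projection of $\tomega$, with underlying ball set $A\subset\omega$. Since $A$ is finite, pick a bounded $\Lambda\subset\R^d$ containing every centre of $A$, so that $A\subset\omega_\Lambda$. By definition of a connected component, no ball of $\omega\setminus A$ meets any ball of $A$, and in particular no ball of $\omega_{\Lambda^c}$ touches $A$.

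Then I would apply the DLR equation of \ref{defi.WR}: conditionally on $\tomega_{\Lambda^c}$, the restriction $\tomega_\Lambda$ has density $\1_\A/\tilde Z_\Lambda(\tomega_{\Lambda^c})$ with respect to $\tpi_\Lambda$. Under $\tpi_\Lambda$, each point's colour is drawn uniformly on $\{1,\dots,q\}$ independently of its position and of the other points, so after further conditioning on the ball positions $\omega_\Lambda$, the colouring of the balls in $\Lambda$ is uniformly distributed over all assignments for which $\tomega_\Lambda+\tomega_{\Lambda^c}\in\A$.

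The admissibility constraint $\A$ amounts to requiring that any two overlapping balls of the coloured configuration share a colour. It therefore forces every connected component of $L(\omega_\Lambda)$ to be monochromatic, and further forces any such component that touches a ball of $\omega_{\Lambda^c}$ to inherit that ball's (fixed) colour. Because $A$ lies entirely inside $\omega_\Lambda$ and no ball of $\omega\setminus A$ touches $A$, the set $A$ is itself a connected component of $L(\omega_\Lambda)$ disjoint from every ball of $\omega_{\Lambda^c}$; its common colour is thus uniform on $\{1,\dots,q\}$ and independent both of the colours of the other connected components of $L(\omega_\Lambda)$ and of $\tomega_{\Lambda^c}$. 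Letting $\Lambda$ exhaust $\R^d$ extends independence to every other connected component of $L(\omega)$, finite or infinite, which yields the claim. The only mild subtlety is checking that $A$'s identity as a connected component carries over from $L(\omega)$ to $L(\omega_\Lambda)$; this is immediate since $A\subset\omega_\Lambda$ and $A$ is isolated from the rest of $\omega$, so no serious obstacle arises.
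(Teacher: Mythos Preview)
Your proposal is correct and follows precisely the route the paper indicates: the paper simply states that the lemma ``is a straightforward consequence of the DLR equations satisfied by $\mu$'' and omits the details, and you have supplied exactly those details by conditioning via the DLR kernel on a window $\Lambda$ containing the finite component and exploiting the product structure of $\tpi_\Lambda$. One minor remark: the step ``letting $\Lambda$ exhaust $\R^d$'' is not really needed, since once $\Lambda$ contains the finite component $A$, the colour of $A$ is already independent of both the colours of the other components of $L(\omega_\Lambda)$ and of $\tomega_{\Lambda^c}$, hence of every other component of $L(\omega)$.
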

This lemma is a straightforward consequence of the DLR equations satisfied by $\mu$. Details are omitted.

Now let us prove that $\mu_{cb}$ satisfies \eqref{eq.GNZ.CRCM} given that $\mu$ satisfies \eqref{eq.GNZ.WR}. Let $F$ be a bounded measurable function from $\Ocb \times \Scb$  to $\R$ and its associated extension $\tilde{F}$ on $\tilde\Ocb \times \tilde \Scb$ defined by $\tilde{F}(\tomega, \tilde{X})=F(\omega, X)$, where $\tilde{X}=(x,R,k)$, $X=(x,R)$ and $\omega$ is the projection of $\tomega$ on $\Ocb$ the space of not-coloured configurations. 
\begin{align}
\int_{\Ocb} & \underset{X \in \omega}{\sum}  F(\omega - \delta_X, X) \mu_{cb}(d\omega)
\nonumber
=
\int_{\Oc} \underset{\tilde{X} \in \tomega}{\sum} 
\tilde{F}(\tomega - \delta_{\tilde{X}}, \tilde{X}) \mu(d\tomega)
\nonumber
\\ &=
\int_{\Oc} \int_{\Sc} \tilde{F}(\tomega, \tilde{X}) 
\1_{\A}( \tomega + \delta_{\tilde{X}}) \tilde{m}(d\tilde{X}) \mu(d \tomega)
\nonumber
\\ &=
\int_{\Ocb} \int_{\Scb} F(\omega, X) 
\int_{\Oc} \underset{k \in \{1,..q \}}{\sum} \frac{1}{q}
\1_{\A}( \tomega + \delta_{(X,k)})
C_{\mu}(d \tomega | \omega )
 m(dX) \mu_{cb}(d \omega),
 \label{eq.calcul.gnz.1}
\end{align}
where $\tilde{X}=(X,k)$ is a coloured point. 
The indicator function  $\1_{\A}( \tomega + \delta_{(X,k)})$ in \eqref{eq.calcul.gnz.1} is equal to one if and only if all connected components of $L(\tomega)$ hitting the ball $B(x,R)$ have the same colour $k$. Now let us consider that $\omega$ and $X=(x,R)$ are fixed and we denote by $j$ the number of connected components in $L(\omega)$ hitting  the ball $B(x,R)$. Since the number of infinite connected components is at most one, two cases are possible :
\begin{itemize}
\item All of those $j$ connected components are finite. Thanks to \ref{lemme:couleur:WR:ccfini} 
 
\begin{align}
\int_{\Oc} \underset{k \in \{1,..q \}}{\sum} \frac{1}{q}
\1_{\A}( \tomega + \delta_{(X,k)})
C_{\mu}(d \tomega | \omega )
&=
\underset{k \in \{1,..q \}}{\sum} \frac{1}{q}   \int_{\Oc} 
\1_{\A}( \tomega + \delta_{(X,k)})
C_{\mu}(d \tomega | \omega )
\nonumber
\\ &= 
\underset{k \in \{1,..q \}}{\sum} \frac{1}{q} \frac{1}{q^j}
= \frac{1}{q^j}.
\label{eq.colo.gnz}
\end{align}
\item One of those connected components is infinite. Thanks to \ref{lemme:couleur:WR:ccfini}, the colours of finite connected components are independent of each other and independent of the colour of the infinite connected component. Therefore similar computations give the same result as in \eqref{eq.colo.gnz}.
\end{itemize}

Adding \eqref{eq.colo.gnz} into \eqref{eq.calcul.gnz.1} we obtain

\begin{align}
\int_{\Ocb} \underset{X \in \omega}{\sum}  F(\omega - \delta_X, X) \mu_{cb}(d\omega)
&=
\int_{\Oc} \int_{\Sc} F(\omega, X) \frac{1}{q^j} m(dX) \mu_{cb}(d\omega)
\nonumber
\\ &=
\int_{\Oc} \int_{\Sc} F(\omega, X)q^{1-j} \frac{1}{q} m(dX) \mu_{cb}(d\omega)
\nonumber
\\ &=
\int_{\Oc} \int_{\Sc} F(\omega, X)
q^{N_{cc}^{\Lambda}(\omega + \delta_X) -N_{cc}^{\Lambda}(\omega) }
\frac{1}{q} m(dx) \mu_{cb}(d\omega)
\label{eq.calcul.gnz.final}
\end{align}
which is exactly the  GNZ equation for a CRCM($z/q,Q,q$). The proposition is proved.

\end{proof}

\subsection{Existence of a Widom-Rowlinson model} 
\label{section.existence.WR}

In order to use \ref{propo:passage:wr:crcm} for proving \ref{bigtheo2}, we need the following existence result.

\begin{propo}\label{existence.WR}
If $\int_{\R^+} R^d Q(dR) = + \infty$, there is a critical $z_c >0$ such that, for all $z<z_c$, there exists a stationary $WR(z,Q,q)$ having at most one infinite connected component. Moreover, with probability one there is at least two balls with different colours.
\end{propo}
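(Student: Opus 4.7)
The plan is to transpose the strategy of Section~\ref{section.preuve.theo1} to the coloured Widom--Rowlinson setting. For each $n$, let $\tilde P_n$ be the finite volume WR measure on $\Lambda_n$ with empty boundary, that is the probability on $\Oc$ with density $\1_{\A}(\tomega)/\tilde Z_n$ relative to $\tpi^{z,Q,q}_{\Lambda_n}$, and let $\bar{\tilde P}_n$ be the stationarization of $\tilde P_n$ built exactly as $\bar P_n$ was built from $P_n$ in Section~\ref{section.theo1.bon.candidat}. Restricting the partition function to configurations with no ball of colour different from $1$ provides the uniform lower bound $\tilde Z_n \geq e^{-z|\Lambda_n|(q-1)/q}$, so $\mathcal{I}^z(\bar{\tilde P}_n) \leq z(q-1)/q$. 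Proposition~\ref{compact} then furnishes an accumulation point $\bar{\tilde P}$ for the local convergence topology.

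The next step is to verify that $\bar{\tilde P}$ is a WR($z,Q,q$) with at most one infinite connected component, by mimicking Sections~\ref{section.theo1.bon.candidat}--\ref{section.DLR.CRCM} in the coloured setting. The DLR part is lighter than in the CRCM case: the integrand $\1_{\A}(\tomega'_\Lambda+\tomega_{\Lambda^c})$ depends on the outer configuration only through the balls of $\tomega_{\Lambda^c}$ that actually overlap some ball of $\tomega'_\Lambda$, which provides intrinsic locality; one designs analogues of the events $A_{i,j},W_{i,j}$ of Section~\ref{section.DLR.CRCM} and pushes the same scheme through. For the uniqueness of the infinite cluster, the local modification property is immediate (the empty configuration in $\Lambda$ is always authorised, so the DLR kernel assigns positive mass to emptying $\Lambda$ conditionally on any outside configuration), and one concludes with a Burton--Keane trifurcation argument applied in the colour-blind picture. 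This part imposes no restriction on $z$.

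The hardest step is to force the presence of two colours almost surely, which is where the threshold $z_c$ intervenes. The naive entropy bound is insufficient, because the monochromatic measure $\tilde\nu_k = \pi^{z/q,Q}$ decorated with a single colour $k$ is itself a stationary WR (the hypothesis $\int R^d Q(dR)=+\infty$ ensures that $\pi^{z/q,Q}$ covers $\R^d$ almost surely, forbidding any ball of another colour), and a direct computation gives $\mathcal{I}^z(\tilde\nu_k) = z(q-1)/q$, exactly saturating our upper bound. The plan is to upgrade this bound to a strict inequality $\mathcal{I}^z(\bar{\tilde P}) < z(q-1)/q$ via a Peierls-type enumeration: for $z$ small, inserting in $\Lambda_n$ a sparse family of small non-overlapping balls of arbitrary colours contributes to $\tilde Z_n$ a combinatorial factor of order $q^N$ that outweighs the geometric cost of keeping different colours apart, producing $\mathcal{I}^z(\bar{\tilde P}_n) \leq z(q-1)/q - \delta(z)$ with $\delta(z)>0$ for all $z<z_c$, and this passes to $\bar{\tilde P}$ by lower semi-continuity of specific entropy. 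Writing the ergodic decomposition $\bar{\tilde P} = \int \mu_\theta\,\Theta(d\theta)$, the event $E=\{\text{all balls share the same colour}\}$ is translation invariant so $\mu_\theta(E)\in\{0,1\}$, and any ergodic WR satisfying $\mu_\theta(E)=1$ must coincide with some $\tilde\nu_k$ and has specific entropy $z(q-1)/q$. Affinity of the specific entropy then yields
\begin{equation*}
\Theta(\{\mu_\theta(E)=1\})\cdot z(q-1)/q \;\leq\; \mathcal{I}^z(\bar{\tilde P}) \;<\; z(q-1)/q,
\end{equation*}
forcing $\Theta(\{\mu_\theta(E)=0\})>0$. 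Any such ergodic component is a stationary WR that inherits the at-most-one-infinite-cluster property from $\bar{\tilde P}$ and is multicoloured almost surely, and is our sought WR. The main obstacle is the quantitative Peierls step above: the non-integrability of $Q$ makes multicoloured configurations geometrically rare, so the exponential entropy gap must be produced by a subtle balance between the combinatorial gain of free colour assignment and the geometric penalty of non-overlap.
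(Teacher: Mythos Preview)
Your overall architecture---build a stationarized limit of finite-volume WR measures, establish uniqueness of the infinite cluster, then discriminate from the monochromatic phases by specific entropy---matches the paper. Your entropy bookkeeping is also the right one: the lower bound $\mathcal{I}^z(P)\ge z(q-1)/q$ for monochromatic stationary $P$ and the ``Peierls-type'' sharpening of the upper bound are exactly the two halves of the paper's Section~\ref{section.WR.preuve.propo.2couleurs}.

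The genuine gap is in your Step~2 (DLR). You write that $\1_{\A}(\tomega'_{\Lambda}+\tomega_{\Lambda^c})$ enjoys ``intrinsic locality'' because it depends only on balls of $\tomega_{\Lambda^c}$ overlapping some ball of $\tomega'_{\Lambda}$, and that analogues of the events $A_{i,j}$ of Section~\ref{section.DLR.CRCM} finish the job. But this is precisely where non-integrability bites: under $\pi^{z,Q}$ with $\int R^d Q(dR)=\infty$, infinitely many outside balls hit any bounded set, so the $A_{i,j}$ events have probability zero and there is nothing to localize. The control in \ref{propo.proba.Aij} explicitly relies on the $d$-moment (via stochastic domination), and without it your ``push the same scheme through'' has no content. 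The paper's replacement mechanism is the \emph{shield events} of Section~\ref{section.WR.shield}: balls of \emph{different colours} planted in cubes surrounding $\Lambda$ form geometric barriers that no authorised ball can cross from either side. This only works once the measure is known to be multicoloured almost surely, which forces the order of the argument to be the reverse of yours: first prove $\bar\nu(Col)>0$ for $z<z_c$ by the entropy comparison, then condition on $Col$, and only then prove DLR for $\bar\nu(\cdot\mid Col)$ using the shields. In particular, DLR \emph{does} depend on the restriction on~$z$.

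A secondary circularity: in Step~3 you invoke that the ergodic components $\mu_\theta$ of $\bar{\tilde P}$ are themselves WR measures (to identify the monochromatic ones with $\tilde\nu_k$), but that presupposes DLR for $\bar{\tilde P}$, which you have not established. The paper avoids this entirely: it proves the entropy lower bound $z(q-1)/q$ for \emph{any} stationary monochromatic measure (no DLR assumed), deduces $\bar\nu(Col)>0$, and works directly with the conditioned measure $\bar\nu(\cdot\mid Col)$, whose DLR equations are then obtained via the shield events and the local convergence $\chi_n^\Lambda\to\bar\nu$. Likewise, the paper gets uniqueness of the infinite cluster for $\bar\nu$ without DLR, by noting that the colour-blind marginal of the finite-volume WR is exactly the finite-volume CRCM and invoking the results of Section~\ref{section.unicit�.ccinfini}, which do not require the $d$-moment.
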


The proof of this result follows the same scheme as the proof of \ref{bigtheo}. 
First in Section \ref{section.WR.construction.candidat} we construct a limit point via a finite volume approximation sequence. The uniqueness of the infinite connected component is proved as in Section \ref{section.unicité.ccinfini}. Our strategy for proving the DLR equations is based on a sequence of shield events presented in Section \ref{section.WR.shield}.  They are related to boxes containing balls with different colours and therefore in Section \ref{section.WR.preuve.propo.2couleurs} we  prove that the limit point do not produce almost surely an unique colour. This is done by comparing its specific entropy to the class of monochromatic probability measures. At this point the assumption "small $z$" seems crucial. Finally in Section \ref{section.WR.DLR} the DLR equations are proved.

\subsubsection{Existence of a limit point}
\label{section.WR.construction.candidat}

As in the previous section, the finite volume Widom-Rowlinson measure with free boundary condition is defined as 
$$
\nu_n (d \tomega)=\nu_n^{z,Q,q} (d \tomega)= \frac{ \1_{\A}(\tomega) }{\tilde{Z}_n} \tpi_{\Lambda_n}^{z,Q,q}(d\tomega).
$$
The probability measures $\hat{\nu}_n= \underset{i \in \Z ^d}{\otimes} \nu_n \circ \tau_{2ni}^{-1}$ and 
$\bar{\nu}_n=\frac{1}{\# I_n} \underset{i \in I_n}{\sum} \hat{\nu}_n \circ \tau_i^{-1}$ are defined from $\nu_n$ as in Section \ref{section.WR.construction.candidat}. Moreover the local convergence topology, the specific entropy  and the tighness tools are similar and lead to the following result. 

\begin{propo}\label{propo.entro.nu n}
For all $n$
$$
\mathcal{I}^z (\bar{\nu}_n^{z,Q,q})
\leq z,
$$
which ensures the existence of an accumulation point $\bar{\nu}^{z,Q,q}=\bar{\nu}$ of the sequence $(\bar{\nu}_n)$ for the local convergence topology. 
\end{propo}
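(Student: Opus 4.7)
The plan is to mimic, with simplifications, the proof of the analogous entropy bound for $\bar{P}_n$ in Section \ref{section.theo1.bon.candidat}. The point is that the unnormalized density of $\nu_n$ with respect to $\tilde{\pi}^{z,Q,q}_{\Lambda_n}$ is simply $\mathds{1}_{\A}(\tilde{\omega})$, with no factor $q^{N_{cc}}$, so the computation of the relative entropy reduces essentially to estimating the partition function $\tilde{Z}_n$.

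First I would invoke the analogue of Proposition~15.52 in \cite{g} (valid on the coloured space $\tilde{\Omega}$ in exactly the same form as on $\Omega$), which states that for a stationarised finite-volume measure such as $\bar{\nu}_n$ the specific entropy with respect to $\tilde{\pi}^{z,Q,q}$ is given by
$$\mathcal{I}^z(\bar{\nu}_n) \;=\; \frac{1}{|\Lambda_n|}\,\mathcal{I}_{\Lambda_n}(\nu_n\,|\,\tilde{\pi}^{z,Q,q}).$$
Next, since $\nu_n \ll \tilde{\pi}^{z,Q,q}_{\Lambda_n}$ with density $\mathds{1}_{\A}/\tilde{Z}_n$ (and $\nu_n$ is supported on $\A$), a direct computation gives
$$\mathcal{I}_{\Lambda_n}(\nu_n\,|\,\tilde{\pi}^{z,Q,q}) \;=\; \int_{\tilde{\Omega}} \ln\!\left(\frac{\mathds{1}_{\A}(\tilde{\omega})}{\tilde{Z}_n}\right) \nu_n(d\tilde{\omega}) \;=\; -\ln(\tilde{Z}_n).$$
The partition function is bounded below by the contribution of the empty configuration: $\tilde{Z}_n \geq \tilde{\pi}^{z,Q,q}_{\Lambda_n}(\tilde{\omega}=0) = e^{-z|\Lambda_n|}$, since the empty configuration is always in $\A$. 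Combining these three identities yields $\mathcal{I}^z(\bar{\nu}_n) \leq z$, which is the claimed bound.

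For the existence of an accumulation point, I would apply the coloured-space analogue of \ref{compact}. With $c_1=0$ and $c_2=z$, the set of stationary probability measures with specific entropy at most $z$ is sequentially compact for the local convergence topology, so the uniformly bounded sequence $(\bar{\nu}_n)$ admits a subsequential limit $\bar{\nu}$. The only mild point to check is that Proposition~2.6 of \cite{GZ} transfers to the coloured setting with reference measure $\tilde{\pi}^{z,Q,q}$; this is routine because the colour mark takes values in the finite set $\{1,\dots,q\}$, so tightness of the coloured configuration reduces to tightness of the underlying germ-grain configuration, which is already handled by the standard result. The main obstacle is really just to verify that all the ingredients (Georgii's identity and the entropy compactness) go through verbatim on $\tilde{\Omega}$; once this is done, the entropy bound itself is a one-line estimate that does not involve the interaction at all.
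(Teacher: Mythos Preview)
Your proof is correct and follows essentially the same route as the paper: both invoke Georgii's identity $\mathcal{I}^z(\bar{\nu}_n)=\frac{1}{|\Lambda_n|}\mathcal{I}_{\Lambda_n}(\nu_n\,|\,\tilde{\pi}^{z,Q,q})$, reduce the relative entropy to $-\ln(\tilde Z_n)$, and bound $\tilde Z_n$ from below by the mass of the empty configuration. Your extra remarks on transferring the entropy-compactness tool to the coloured space are a harmless elaboration; the paper simply takes this for granted.
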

\begin{proof}
By Proposition 15.52 in \cite{g}
\begin{align}
\mathcal{I}^z (\bar{\nu}_n^{z,Q,q})
&=
\frac{1}{| \Lambda_n |} \mathcal{I}_{\Lambda_n} (\nu_n^{z,Q,q} | \tpi^{z,Q,q}) \nonumber
\\ &=
\frac{1}{| \Lambda_n |} \int_{\Oc} \ln \left( \frac{ \1_{\A}(\tomega)}{Z_n}   \right) \frac{ \1_{\A}(\tomega)}{Z_n} 
\tpi_{\Lambda_n}^{z,Q,q}(d \tomega) \nonumber
\\
\mathcal{I}^z (\bar{\nu}_n^{z,Q,q})
 &=
\frac{- \ln (Z_n)}{| \Lambda_n |} \leq z . \nonumber
\end{align}
\end{proof}

For simplicity we suppose that $(\bar{\nu}_n)$ converges to $\bar{\nu}^{z,Q,q}=\bar{\nu}$ (without taking a subsequence). Let us start to investigate the property of $\bar \nu$.

\begin{propo}\label{propo.proba.A}
$
\bar{\nu}(\A)=1.
$
\end{propo}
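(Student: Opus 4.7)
The strategy is to note that although $\A$ is not a local event, its complement is an increasing countable union of local events, each of which will be shown to have $\bar\nu$-measure zero. For a bounded Borel set $\Lambda \subseteq \R^d$ define the event
$$
B_\Lambda = \left\{ \tomega \in \Oc : \exists\, (x,R,k),(x',R',k') \in \tomega,\ x,x' \in \Lambda,\ k \neq k',\ |x-x'| \le R+R' \right\}.
$$
Because this depends only on the restriction $\tomega_\Lambda$ (the radii of points centred in $\Lambda$ are part of the marks stored in $\tomega_\Lambda$, so no bound on radii is needed), the indicator $\1_{B_\Lambda}$ is a bounded local function and the local convergence $\bar\nu_n \to \bar\nu$ gives $\bar\nu(B_\Lambda) = \lim_n \bar\nu_n(B_\Lambda)$. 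Since $\A^c = \bigcup_{n \ge 1} B_{\Lambda_n}$ with $(B_{\Lambda_n})$ increasing in $n$, it suffices to prove $\bar\nu(B_\Lambda) = 0$ for each fixed bounded $\Lambda$.

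The key observation is that under $\hat\nu_n$ the configuration decomposes as an independent sum $\sum_{k \in \Z^d} \tomega^{(k)}$, where each piece $\tomega^{(k)}$ is distributed like $\nu_n \circ \tau_{2nk}^{-1}$: it is supported in the single tile $\tau_{2nk}(\Lambda_n)$ of the $2n\Z^d$-periodic partition of $\R^d$ and, by construction of $\nu_n$ together with the translation invariance of $\A$, lies almost surely in $\A$. Consequently, whenever a bounded set $\Lambda'$ is contained in a single tile $\tau_{2nk_0}(\Lambda_n)$, all points of $\tomega$ with centre in $\Lambda'$ come from the single piece $\tomega^{(k_0)}$, and their colours automatically respect the non-overlap rule; hence $\hat\nu_n(B_{\Lambda'}) = 0$. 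Using $(\hat\nu_n \circ \tau_i^{-1})(B_\Lambda) = \hat\nu_n(B_{\Lambda - i})$ one rewrites
$$
\bar\nu_n(B_\Lambda) = \frac{1}{(2n)^d} \sum_{i \in I_n} \hat\nu_n(B_{\Lambda - i}),
$$
and only those $i \in I_n$ for which $\Lambda - i$ straddles a tile boundary contribute. Once $n$ exceeds the diameter of $\Lambda$, those bad indices lie in a fixed-width neighbourhood of $\partial I_n$ and number at most $O(n^{d-1})$; bounding each surviving term trivially by $1$ gives $\bar\nu_n(B_\Lambda) = O(n^{-1}) \to 0$, hence $\bar\nu(B_\Lambda) = 0$.

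No serious obstacle is expected. The only point worth emphasising is that $B_\Lambda$ remains local even in the non-integrable radii regime, since unbounded radii appear only as marks of points whose centres are in $\Lambda$; the rest is the standard surface-to-volume comparison $n^{d-1}/n^d \to 0$ that controls boundary effects for finite-volume Gibbs approximations.
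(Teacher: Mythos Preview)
Your proof is correct and follows essentially the same route as the paper's: both approximate $\A$ by local events (you via $\A^c=\bigcup_n B_{\Lambda_n}$, the paper via $\1_\A(\tomega)=\lim_k \1_\A(\tomega_{\Lambda_k})$), pass to $\bar\nu_n$ by local convergence, and then use the tile decomposition of $\hat\nu_n$ together with a surface-to-volume count (your $O(n^{d-1})/(2n)^d$, the paper's $(2(n-k))^d/(2n)^d\to 1$) to kill the boundary terms.
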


\begin{proof}
The event $\A$ is not local but has the following approximation by local events : for all $\tomega$, we have 
$\1_{\A}(\tomega)= \underset{k \to \infty} {\lim} \1_{\A}(\tomega_{\Lambda_k})$. The local convergence can be used.
\begin{align}\label{eq.calcul.propo.proba.A}
\bar{\nu}(\A)
&=
\underset{k \to \infty}{\lim} \underset{n \to \infty} {\lim}
\int_{\Oc} \1_{\A} ( \tomega_{\Lambda_k}) \bar{\nu}_n(d \tomega) \nonumber
\\ &=
\underset{k \to \infty}{\lim} \underset{n \to \infty} {\lim}
\frac{1}{(2n)^d} \underset{i \in I_n}{\sum}
\int_{\Oc} \1_{\A} ( \tau_i(\tomega_{\Lambda_k})) \tilde{\nu}_n(d \tomega).
\end{align}
For $n >k$, the configuration $\tau_i(\tomega_{\Lambda_k})$ is $\tilde{\nu}_n$-almost surely in $\A$ as soon as $i \in [k-n,n-k]^d$. Therefore
\begin{align}
\bar{\nu}(\A) 
\geq 
\underset{k \to \infty}{\lim} \underset{n \to \infty} {\lim}
\frac{(2(n-k))^d}{(2n)^d} =1. \nonumber
\end{align}
\end{proof}

\begin{propo}\label{propo.WR.unicité.cci}
$$
\bar{\nu}( N_{cc}^{\infty} \leq 1 ) =1,
$$
where the event $\{ N_{cc}^{\infty} \leq 1 \}$ is defined as in Section \ref{section.unicité.ccinfini}.
\end{propo}

\begin{proof}
The proof is based on the results of Section \ref{section.unicité.ccinfini} on the uniqueness of the infinite connected component for the CRCM. First note that the colour-blind probability measure of $\nu_n^{z,Q,q}$, denoted  by $\nu_{n,cb}^{z,Q,q}$, is the finite volume continuum random cluster measure $P_n^{z/q,Q,q}$ defined in Section \ref{section.theo1.bon.candidat}. Therefore, in passing to the limit for a suitable subsequence we find that $
\bar{P}^{z/q,Q,q} = \bar{\nu}^{z,Q,q}_{cb}$. By \ref{propo.unicité} $\bar{P}^{z/q,Q,q}$ produces at most one infinite connected component and by the identification above the same occurs for $\bar{\nu}^{z,Q,q}_{cb}$.\end{proof}

Note that since $\int_{\R^+} R^d Q(dR)= + \infty$, \ref{bigtheo} can not be apply and we don't know if $\bar{P}$ is a CRCM.

\subsubsection{Shield Events}
\label{section.WR.shield}
In this section a sequence $(W_k)$ of "shield" events is introduced in order to localize the indicator function $\1_{\A}(\tomega'_{\Lambda} + \tomega_{\Lambda^c})$.
We define the event 
$$
Col
=
\{
\tomega \in \Oc, \ \tomega \text{ has at least two balls with different colours}
\}.
$$
\begin{propo} \label{propoexistenceshield}
Let $\Lambda$ be a bounded subset of $\R^d$ and let $\nu$ be a stationary probability measure on ($\Oc, \Fc$) satisfying $\nu(Col)=1$. 
Then there exists a sequence $(\Delta_k)_{k \geq 1}$ of compact subset of $\R^d$, a sequence of events $(W_k)_{k \geq 1}$,
satisfying $W_k \in \Fc _{\Delta_k}$ (in particular they are local) such that
\begin{enumerate}
\item $\nu(W_k) \underset{k \to \infty}{\longrightarrow} 1$,

\item for all configurations $\tomega$ in $\A \cap W_k$ and $\tomega'$ in $\Oc$, we have
$$
\1_{\A}( \tomega'_{\Lambda} + \tomega_{\Lambda^c} )
=
\1_{\A}( \tomega'_{\Lambda} + \tomega_{\Delta_k \setminus \Lambda} ).
$$
\end{enumerate}

\end{propo}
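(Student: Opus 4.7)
The plan is to define each event $W_k$ as the event, measurable with respect to $\tomega_{\Delta_k}$, that there exist three nested monochromatic topological spheres of alternating colors enclosing $\Lambda$; on $\A\cap W_k$ the indicator equality then follows from a color-matching argument, while $\nu(W_k)\to 1$ is secured by the positive density of two colors made available by $\nu(Col)=1$.

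To set things up, using $\nu(Col)=1$ and stationarity (via the ergodic decomposition of $\nu$ if necessary) I pick two distinct colors $c_1\neq c_2$ with strictly positive $\nu$-density. Set $\Delta_k=\Lambda\oplus[-k,k]^d$ and choose three nested annuli $A_k^{(1)}\subset A_k^{(2)}\subset A_k^{(3)}$ inside $\Delta_k\setminus\Lambda$ whose widths grow unboundedly with $k$. Define $W_k$ to be the event that, for the three colors $c_1,c_2,c_1$ respectively, the union of color-$c$ balls of $\tomega$ centered in $\Delta_k$ contains a topological sphere enclosing $\Lambda$ inside the corresponding annulus $A_k^{(j)}$. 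I will call these three spheres the inner, middle and outer \emph{shields}; the event $W_k$ is manifestly $\Fc_{\Delta_k}$-measurable.

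For the indicator equality on $\A\cap W_k$, the inequality $\1_\A(\tomega'_\Lambda+\tomega_{\Lambda^c})\leq\1_\A(\tomega'_\Lambda+\tomega_{\Delta_k\setminus\Lambda})$ is automatic by monotonicity of $\1_\A$ under enlarging configurations. For the reverse, assume the right-hand side equals $1$, and for contradiction suppose that a pair $X=(x,R,c)\in\tomega_{\Delta_k^c}$ and $X'=(x',R',c')\in\tomega'_\Lambda$ with $c\neq c'$ satisfies $B(x,R)\cap B(x',R')\neq\emptyset$. Since $X'$ must not conflict with $\tomega_{\Delta_k\setminus\Lambda}$, the ball $B(x',R')$ cannot cross any shield of a color different from $c'$: this traps $B(x',R')$ inside the middle sphere for every possible $c'$ (for $c'=c_1$ the middle $c_2$-shield blocks; for $c'=c_2$ the inner $c_1$-shield does; for any other $c'$ both do). On the other hand, since $\tomega\in\A$, whenever $B(x,R)$ meets a shield ball it must share its color, so $x$ lying outside all three spheres (it is in $\Delta_k^c$) together with $B(x',R')$ lying inside the middle sphere would force $B(x,R)$ (convex, connected) to cross both the outer $c_1$-shield and the middle $c_2$-shield, yielding the contradictory equalities $c=c_1=c_2$. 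Hence no such overlap is possible.

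The main obstacle is to establish $\nu(W_k)\to 1$. My plan is to show that $\nu$-almost surely some $k$ places $\tomega$ into $W_k$ (so $\nu(\bigcup_k W_k)=1$), and then to pass to the limit via monotone convergence along an increasing subfamily of shield events. The almost sure existence of such a $k$ is where the hypothesis $\nu(Col)=1$ is essential: positive density of both $c_1$ and $c_2$ in the ergodic components of $\nu$, combined with annuli $A_k^{(j)}$ whose widths grow with $k$, feeds a random-set covering estimate producing almost surely, for $k$ large, a topologically separating monochromatic sphere of the required color inside each $A_k^{(j)}$.
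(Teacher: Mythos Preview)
Your geometric argument for property 2 is sound: once three nested monochromatic shields of alternating colors exist, the color-matching argument correctly traps $\tomega'_\Lambda$-balls inside the middle shield and $\tomega_{\Delta_k^c}$-balls outside it. The problem lies entirely in property 1.

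The claim that monochromatic separating spheres exist with probability tending to $1$ is unjustified and in fact false under the sole hypotheses ``$\nu$ stationary and $\nu(Col)=1$''. Nothing forces the balls of a single color to percolate, let alone to form a closed surface around $\Lambda$. For a concrete obstruction, take $\nu$ to be the law of i.i.d.\ marked points on $\Z^d$ with deterministic radius $0.1$ and uniform random colors: this measure is stationary, lies in $\A$ almost surely, and satisfies $\nu(Col)=1$, yet no two balls even touch, so no monochromatic connected shield can ever form. Your ``random-set covering estimate'' would need a percolation input that the hypotheses simply do not provide. A secondary issue is that fixing two specific colors $c_1,c_2$ in advance cannot be done measurably across ergodic components (different components may realize different color pairs), so the resulting $W_k$ would not be a fixed $\Fc_{\Delta_k}$-event.

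The paper's construction avoids all of this by a much coarser mechanism: it places small cubes at the corners of a box containing $\Lambda$ and defines $W_k$ as the event that \emph{each corner cube contains at least two balls of different colors} (no connectivity required). The key geometric fact is that any sufficiently large ball centered in $\Lambda$ necessarily \emph{covers} an entire corner cube; on $\A$ it would then overlap two balls of distinct colors, which is forbidden. A second layer of corner cubes handles balls coming from $\Delta_k^c$. The probability estimate $\nu(W_k)\to 1$ then reduces to the elementary fact that, under stationarity and $\nu(Col)=1$, a growing cube eventually contains two colors with probability tending to $1$---no percolation or covering is needed.
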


\begin{proof} ${}$
Let us begin with the construction of the set $\Delta_k$ and the event $W_k$. The idea is simple. If balls with different colours are wisely placed around $\Lambda$, then those balls prevent those far away to hit the balls in $\Lambda$. Let us give the details.

First since $\Lambda$ is bounded, it is included in a cube 
$\bar{\Lambda}=[-\alpha, \alpha]^d$ for some positive integer $\alpha$. Now for each integer $k$ larger than $\alpha$, we place in each corner of  $\bar{\Lambda}$ a cube $B_j^k$, $j \in \{ 0,1 \}^d$, with edge length $k$;
$$
B_j^k
=
 \underset{i=1..d}{\prod} (-1)^{j_i} [\alpha,k + \alpha],
$$
where $(-1)^{j_i } [\alpha,k + \alpha]= [\alpha, k + \alpha]$ if $j_i= 0$ and $[-\alpha - k ,-\alpha]$ if $j_i=1$. We denote by $W_k^1$ the event of configurations  $\tomega$ having at least two balls with different colours centred inside each $B_j^k$. By simple geometrical arguments there exists a positive integer $D_1$ depending on $k$ such that any ball centred in $\Lambda$, hitting the set $G:=[-\alpha - k -D_1, \alpha  +k +D_1]^d$, necessary covers at least one cube $B_j^k$ for some $j \in \{ 0,1 \}^d$.  So the event $W_k^1$ confines the balls centred in $\Lambda$ inside the set $G$. 
Now we need to prevent balls centred too far away to hit the set $G$. We consider the following $2^d$ cubes
$$
C_j^k
=
 \underset{i=1..d}{\prod} (-1)^{j_i} [\alpha +k +D_1+ 1,\alpha + 2k +D_1+ 1]
$$

for $j \in \{0,1 \} ^d$ and we denote by $W_k^2$ the event of configurations $\tilde \omega$ having at least two balls with different colours centred inside each $C_j^k$. By simple geometrical arguments again, there is a positive integer $D_2$ depending on $k$ such that any ball centred outside 

$$
\Delta_k=[-\alpha - 2k -D_1 -1-D_2, \alpha +2k +D_1 +1 +D_2]^d
$$

hitting the set $G$ necessary covers at least one cube $C_j^k$ for some $j \in \{ 0,1 \}^d$.  So the event $W_k^2$ confines the balls centred in $\Delta_k^c$ inside the set $G^c$. 

\begin{figure}[t]
\begin{center}
   \includegraphics[scale=0.3]{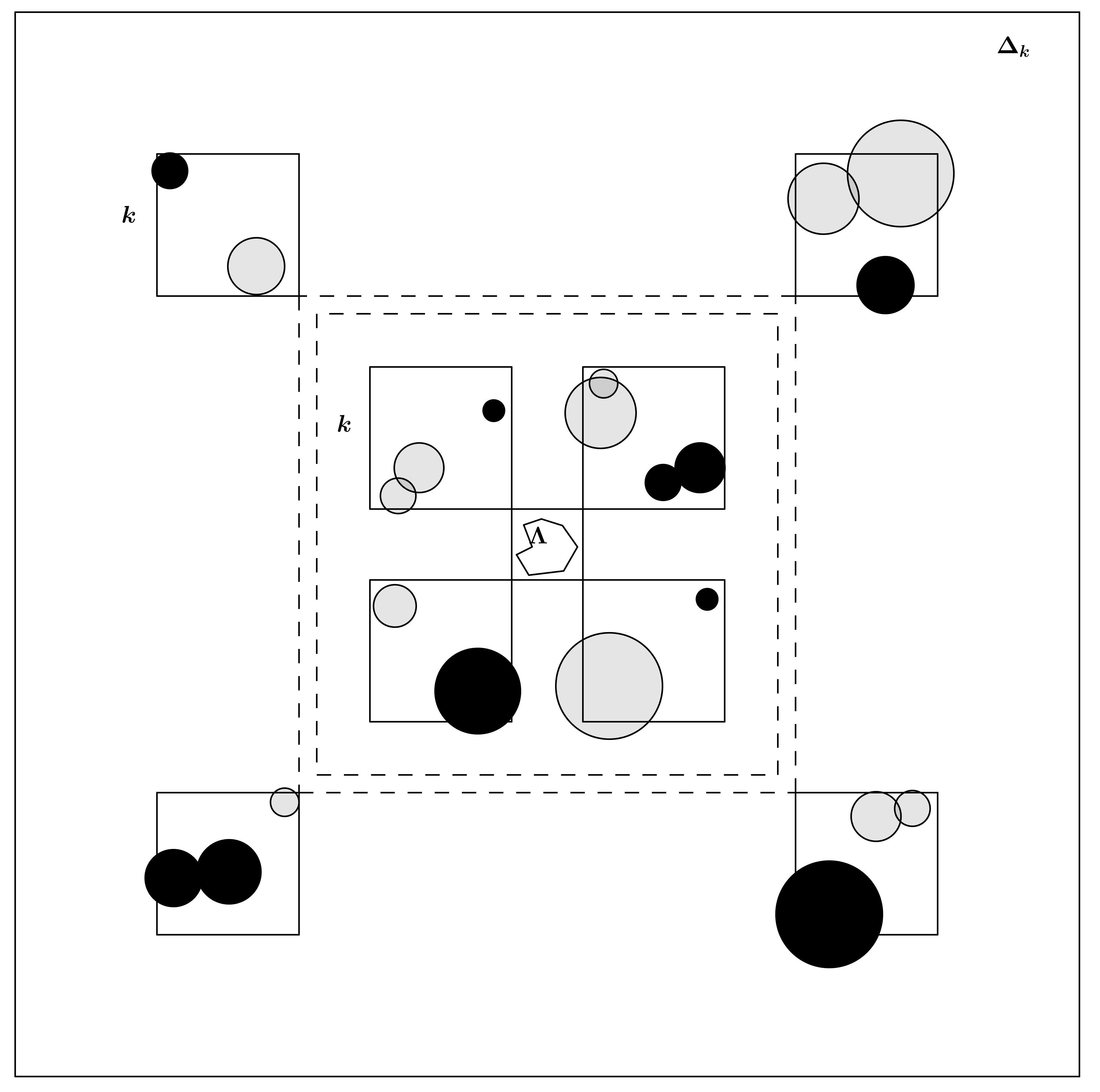}
\end{center}
\caption{Shield event $W_k$}
\end{figure}

In conclusion the event $W_k:=W_k^1\cap W_k^2$ insures that for an allowed configuration $\tilde\omega\in \A$ the balls centred in $\Lambda$ do not hit the balls centred outside $\Delta_k$. The property 2) in \ref{propoexistenceshield} follows. It remains to prove the property 1).

\begin{align}
\nu(W_k^c)
&=
\nu\left( \underset{j \in \{0,1\}^d }{\bigcup}  \{ \tomega_{B_j^k} \not \in Col  \} \cup \{ \tomega_{C_j^k} \not \in Col  \}
  \right) \nonumber
 \\ & \leq
\underset{j \in \{0,1\}^d }{\sum}
\nu( \{ \tomega_{B_j^k} \not \in Col  \}) + \nu( \{ \tomega_{C_j^k} \not \in Col  \}). \nonumber
\end{align}
Since $\nu$ is stationary the probabilities
$\nu( \{ \tomega_{B_j^k} \not \in Col  \})$ and $\nu( \{ \tomega_{C_j^k} \not \in Col  \})$ are equal for all $j$ and fixed $k$. Moreover since $\nu(Col)=1$ this probability tends to $0$ when $k$ tends to infinity. The property 1) follows.

\end{proof}

\subsubsection{The limit point is not monochromatic}
\label{section.WR.preuve.propo.2couleurs}

Ideally we wanted to prove that $\bar{\nu}(Col)=1$ in order to use the shield events $(W_k)$ in \ref{propoexistenceshield}. Unfortunately we didn't success to prove it. However we show that $\bar{\nu}(Col)>0$ for $z$ small enough. It will be enough in the following by considering the probability measure $\bar \nu(.|Col)$ as the expected Widom-Rowlinson model. A probability measure $\mu$ which satisfies $\mu(Col)=0$ is called monochromatic.

\begin{propo}\label{propo.2couleurs}
There is $z_0$ such that, for all $z<z_0$
$$
\bar{\nu}^{z,Q,q}(Col)>0.
$$
\end{propo}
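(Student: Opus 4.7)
The plan is to argue by contradiction. Suppose $\bar\nu^{z,Q,q}(Col)=0$. Since both $\1_\A$ and $\tpi^{z,Q,q}$ are invariant under permutations of the $q$ colors, the measures $\nu_n$ and hence $\bar\nu_n$ are color-symmetric, and so is the accumulation point $\bar\nu$. Under the monochromaticity assumption, $\bar\nu$ therefore assigns mass exactly $1/q$ to each of the $q$ monochromatic pure phases.

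For such a symmetric monochromatic stationary measure, conditioning on the uncolored configuration with $N$ points the Radon--Nikodym density of the monochromatic symmetric coloring law against the i.i.d.\ uniform coloring law equals $q^{N-1}$; this yields the specific entropy decomposition
$$
\mathcal{I}^z(\bar\nu) = \mathcal{I}^z(\bar\nu_{cb}) + i(\bar\nu)\log q.
$$
Moreover $\bar\nu$ satisfies the WR-DLR equations, inherited from $\bar\nu_n$. Using $\int R^d Q(dR)=+\infty$, the color-$k$ Boolean model of any monochromatic $\omega_{\Lambda^c}$ almost surely covers $\Lambda$ (its Palm radius law being $Q$), so any ball added in $\Lambda$ under the DLR kernel is forced to carry color $k$. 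Summing out this forced color, the conditional density of the uncolored marginal on $\Lambda$ is proportional to $q^{-N}$ against $\pi^{z,Q}_\Lambda$, i.e., it is $\pi^{z/q,Q}_\Lambda$. Hence $\bar\nu_{cb}=\pi^{z/q,Q}$, $i(\bar\nu)=z/q$, and a direct calculation yields the identity
$$
\mathcal{I}^z(\bar\nu) = \frac{z(q-1)}{q}.
$$

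The contradiction comes from a refined upper bound. Integrating out the i.i.d.\ uniform colors in $Z_n=\int\1_\A d\tpi^{z,Q,q}_{\Lambda_n}$ (the probability under uniform coloring that all points within each connected component share a common color is $q^{N_{cc}-N}$) gives
$$
Z_n = \int q^{N_{cc}(\omega)-N(\omega)}\pi^{z,Q}_{\Lambda_n}(d\omega) = e^{-z|\Lambda_n|(q-1)/q}\int q^{N_{cc}(\omega)}\pi^{z/q,Q}_{\Lambda_n}(d\omega),
$$
so $\mathcal{I}^z(\bar\nu_n)=z(q-1)/q-|\Lambda_n|^{-1}\log Z^{CRCM}_n$ with $Z^{CRCM}_n$ the CRCM partition function at intensity $z/q$. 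Using the identity $q^N d\pi^{z/q,Q}=e^{z(q-1)|\Lambda_n|/q}d\pi^{z,Q}$ and restricting the integration to configurations with all radii $\le M$ and no pair of balls overlapping (so $N_{cc}=N$), a dilute Poisson hard-core estimate with the choice $M=M(z)\to\infty$ such that $M(z)^dz\to 0$ and $Q([0,M(z)])\to 1$ yields $|\Lambda_n|^{-1}\log Z^{CRCM}_n\ge z(q-1)/q - o(z)$ uniformly for $n$ large. By lower semicontinuity of the specific entropy, $\mathcal{I}^z(\bar\nu)\le \liminf_n \mathcal{I}^z(\bar\nu_n)\le o(z)$, which for $z$ below some threshold $z_0$ strictly contradicts the identity $\mathcal{I}^z(\bar\nu)=z(q-1)/q$ obtained in the previous paragraph. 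Hence $\bar\nu(Col)>0$ for $z<z_0$.

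The main technical obstacles are twofold. First, the identification $\bar\nu_{cb}=\pi^{z/q,Q}$ requires transferring the colored WR-DLR into a colorblind DLR using the a.s.\ coverage of $\R^d$ by the monochromatic Boolean model under non-integrable $Q$; this relies on knowing that the Palm radius law of $\bar\nu_{cb}$ inherited from the reference $\tpi^{z,Q,q}$ is $Q$. Second, the dilute Poisson hard-core lower bound on $Z^{CRCM}_n$ must survive the thermodynamic limit despite the heavy tails of $Q$, which typically coalesce many points into a single connected component; the cutoff $M(z)$ is the device that reconciles the small-radius, dilute-Poisson regime with the asymptotic contribution to the pressure.
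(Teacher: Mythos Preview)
Your overall entropy-comparison strategy matches the paper's, and your upper bound on $\mathcal{I}^z(\bar\nu_n)$ via the identity $Z_n = e^{-z|\Lambda_n|(q-1)/q}Z^{CRCM}_n$ together with a dilute hard-core estimate is a reasonable variant of the paper's box-decomposition lower bound on $\tpi^{z,Q,q}_{\Lambda_n}(\A)$.

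The genuine gap is in the lower-bound half. You assert that ``$\bar\nu$ satisfies the WR-DLR equations, inherited from $\bar\nu_n$,'' and from this deduce $\bar\nu_{cb}=\pi^{z/q,Q}$ and hence the exact value $\mathcal{I}^z(\bar\nu)=z(q-1)/q$. But at this point nothing guarantees that $\bar\nu$ is a WR$(z,Q,q)$: passing the DLR equations through the local-convergence limit is precisely what the paper establishes \emph{after} the present proposition (Section~\ref{section.WR.DLR}), and that passage relies on the shield events of \ref{propoexistenceshield}, which themselves require conditioning on $Col$ and hence $\bar\nu(Col)>0$. So invoking DLR here is circular. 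Your Palm-radius/coverage argument is equally unavailable: it presumes the radii under $\bar\nu$ are i.i.d.\ with law $Q$, again a consequence of DLR and not known a priori for an abstract accumulation point.

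The paper sidesteps all of this by proving the inequality $\mathcal{I}^z(P)\ge z(q-1)/q$ for \emph{every} stationary monochromatic $P$, with no Gibbsian input. If $P$ is (say) all red, the density of $P_{\Lambda_n}$ against $\tpi^{z,Q,q}_{\Lambda_n}$ factors as $e^{z(q-1)|\Lambda_n|/q}\,\1_{\emptyset}(\tomega^{NR})\cdot f_2(\tomega^R)$ with $f_2$ the density of the red marginal against $\pi^{z/q,Q}_{\Lambda_n}$; the $f_2$-contribution to the entropy is a relative entropy and hence nonnegative, yielding the bound immediately. Your decomposition $\mathcal{I}^z(\bar\nu)=\mathcal{I}^z(\bar\nu_{cb})+i(\bar\nu)\log q$ is correct but unhelpful, since without DLR you cannot control $i(\bar\nu)$; splitting against the reference $\pi^{z/q,Q}$ rather than $\pi^{z,Q}$ is exactly what makes the paper's bound unconditional.
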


\begin{proof}

We show that $\bar{\nu}$ is different from all stationary monochromatic probability measures by comparing their specific entropy. First we show a uniform lower bound for any  stationary monochromatic probability measures. Secondly we find an upper bound for the specific entropy of $\bar{\nu}$ which is smaller than the lower bound above. These bounds are detailed in the points 1) and 2) below.

{\it 1) The uniform lower Bound}

Let $P$ be a stationary monochromatic probability measure and we suppose first that its colour is deterministic (let us call it red). For any positive integer $n$, we are looking for a lower bound of $\mathcal{I}(P_{\Lambda_n} | \tpi^{z,Q,q}_{\Lambda_n})$. If $P_{\Lambda_n}$ is not absolutely continuous with respect to $\tpi^{z,Q,q}_{\Lambda_n}$ then 
$\mathcal{I}(P_{\Lambda_n} | \tpi^{z,Q,q}_{\Lambda_n})= + \infty$ else $$
\mathcal{I}(P_{\Lambda_n} | \tpi^{z,Q,q}_{\Lambda_n})
=
\int_{\Oc} \ln \left(   \frac{dP_{\Lambda_n}}{d\tpi^{z,Q,q}} (\tomega)\right) \ dP_{\Lambda} (\tomega).
$$

For each configuration $\tomega$ we consider $\tomega^R$ the red balls of $\tomega$ and $\tomega^{NR}$ the others. Then
$$
\frac{dP_{\Lambda_n}}{d\tpi^{z,Q,q}} (\tomega)
=
f_1(\tomega^{NR} |\tomega^{R} )  f_2(\tomega^R),
$$
where 
$f_1( . |\tomega^{R} )$ is the conditional density, with respect to $\tpi^{\frac{q-1}{q}z,Q,q-1}_{\Lambda_n}$, of the non red configurations given $\tomega^R$ 
and $f_2$ the density of red configurations with respect to 
$\tpi_{\Lambda_n}^{z/q,Q,1}$. Since $P$ is monochromatic 
$$
f_1(\tomega^{NR} |\tomega^{R} )
=
\exp\left( \frac{q-1}{q} z |\Lambda_n|  \right) \1_{\emptyset}(\tomega^{NR}), 
$$
and we find
\begin{align*}
\mathcal{I}(P_{\Lambda_n} | \tpi^{z,Q,q}_{\Lambda_n})
&=
\int_{\Oc} \ln( f_1(\tomega^{NR} |\tomega^{R} ) ) P_{\Lambda_n}(d\tomega)+ \int_{\Oc} \ln(  f_2(\tomega^{R}  ) ) P_{\Lambda_n}(d\tomega).
\\ & \geq  \frac{q-1}{q} z |\Lambda_n| .
\end{align*}
Dividing by $|\Lambda_n|$ and taking the limit we obtain this uniform lower bound
\begin{align}
\mathcal{I}^z(P) \geq \frac{q-1}{q}z. \label{eq.minoration.entropie.spe}
\end{align}

Now if the colour of $P$ is not deterministic, then $P$ is a mixture of $q$ monochromatic probability measures with deterministic colour. Since the specific entropy is an affine functional (Proposition 15.14 \cite{g}) the inequality \eqref{eq.minoration.entropie.spe} is still valid for such $P$.

{\it 2) Upper bound for the specific entropy of $\bar{\nu}$}

In \ref{propo.entro.nu n} a first bound $\mathcal{I}^z(\bar{\nu}) \leq z$ is given  but it is not fine enough here since it is larger than the lower bound in \eqref{eq.minoration.entropie.spe}. Let us improve this bound. Recall that
$$
\mathcal{I}^z(\bar{\nu}_n)
=
\frac{1}{|\Lambda_n|} \mathcal{I}(\nu_n | \tpi^{z,Q,q}_{\Lambda_n})
=
\frac{- \ln( \tpi^{z,Q,q}_{\Lambda_n}(\A) )}{|\Lambda_n|}.
$$

Let $y>0$ and $\Delta$ be the cube $]0,y]^d$. $\Lambda_n$ is divided into $k_n$ disjoint copies of $\Delta$ and a boundary term. So $|\Lambda_n| = (2n)^d = k_n |\Delta | + c_n = y^d k_n + c_n $ where $c_n=o(n^d)$. 
We denote by $\phi_y=\frac{1}{y^d} \int_{\R^d} \int_{\R^+} \1_{B(x,R) \subseteq \Delta} Q(dR) dx $ the probability that a ball centred in $\Delta$ is completely included inside $\Delta$. A particular allowed configuration $\tilde \omega\in \A$ can be constructed by forcing that all the balls centred in each copy of $\Delta$ have the same colours and are completely included in $\Delta$. It leads to the following inequality.

\begin{align}\label{eq.minoprobaA}
\tpi^{z,Q,q}_{\Lambda_n}(\A)
&\geq
\left[  \exp(-zy^d) 
\left(  1+ q\underset{i \in \N ^*}{\sum} \frac{1}{i! q^i} (z y^d \phi_y)^i   \right)  \right] ^{k_n} \exp(-z((2n)^d -k_n y^d) ) \nonumber
\\ & \geq
\exp(-z (2n)^d) \times \left(  1+ \underset{i \in \N ^*}{\sum} \frac{q}{i! q^i} (z y^d \phi_y)^i   \right)^{k_n},
\end{align}
and therefore
\begin{align}\label{eq.majoentro}
\mathcal{I}^z(\nu_n^{z,Q,q})
& \leq
z - \frac{k_n}{(2n)^d} \ln \left(  1-q+ q \exp\left( \frac{z y^d \phi_y}{q}\right)  \right) \nonumber
\\ & \leq
z + \frac{1}{y^d}\left( \frac{c_n}{(2n)^d} - 1 \right) 
\ln \left(  1-q+ q \exp\left( \frac{z y^d \phi_y}{q}\right)  \right).
\end{align}

Now since $\frac{c_n}{(2n)^d} \underset{n \to \infty}{\longrightarrow} 0$, there is $n_0$ such that for any $n\geq n_0$ we have $\frac{c_n}{(2n)^d} - 1 \leq - \frac78$.
Proving that the upper bound \eqref{eq.majoentro} is smaller than the lower bounds in \eqref{eq.minoration.entropie.spe}, falls to show that the following function $\Psi$ is negative
 $$
 \Psi : z \mapsto \frac{z}{q} - \frac{7}{8 y^d}
  \ln \left(1-q + q \exp\left( \frac{z y^d \phi_y}{q}\right)  \right).
 $$ 
The derivative is given by
\begin{align}
\Psi'(z)
=
\frac{1}{q} - \frac78 \phi_y \nonumber
\frac{\exp (z y^d \phi_y/q)}{1-q+q \exp(zy^d\phi_y /q )}
\end{align}
which is null for the unique root
$z_y=\frac{q}{\phi_y y^d}  \ln \left(  \frac{q-1}{q} \frac{1}{ 1 -7 \phi_y / 8 }  \right)$. 
This root is positive as soon as $\phi_y > \frac{8}{7q}$, which is realized when $y$ is large enough, since $\phi_y \underset{y \to \infty}{\to} 1$.
With this settings we have $\Psi'(0)<0$ and since $\Psi(0)=0$, the function $\Psi$ is negative at least for $z$ smaller than $z_y$. Hence there exists $z_0>0$ such that for any $0<z<z_c$ there exits $\epsilon>0$ such that for $n\geq n_0$
\begin{align}\label{eq.majoentro2}
\mathcal{I}^z(\nu_n^{z,Q,q})
\leq
\frac{q-1}{q}z-\epsilon.
\end{align}

Since the specific entropy is lower semi-continuous, the inequality \eqref{eq.majoentro2} holds for $\bar{\nu}^{z,Q,q}$ as well. Thanks to the uniform lower bound \eqref{eq.minoration.entropie.spe} for monochromatic probability measures the \ref{propo.2couleurs} is proved.

\end{proof} 
 
\subsubsection{The DLR equations} 
\label{section.WR.DLR}

As mentioned in Section \ref{section.WR.preuve.propo.2couleurs} we  consider the conditional probability measure $\bar{\nu}_{Col} = \bar{\nu}_{Col}^{z,Q,q}= \bar{\nu}(. |Col)$, which is well-defined by \ref{propo.2couleurs}, as the expected Widom-Rowlinson model. Since the event $Col$ is stationary, the probability measure $\bar{\nu}_{Col}$ is still stationary and satisfies $\bar{\nu}_{Col}(\A)=1$ as well. Let us show that $\bar{\nu}_{Col}$ satisfies the DLR equations.

As in \eqref{defimodif} the sequence $(\bar{\nu}_n)$ has to be modified. Let $\Lambda$ be a bounded subset of $\R ^d$. we define
$$
\chi _n^{\Lambda} 
= 
\frac{1}{(2n)^d} \sum\limits_{\substack{i \in I_n \\ \Lambda \subseteq \tau_i(\Lambda_n)}} \nu_n \circ \tau_i^{-1}.
$$

and the analogous of \ref{propo.convlocal.DLR} holds.

Let $f$ be a local measurable function bounded by $1$. We define $\delta$ as followed.
$$
\delta
=
\Big|
\int_{\Oc} f d\bar{\nu}_{Col}
-
\int_{\Oc} \int_{\Oc}
f( \tomega'_{\Lambda} + \tomega_{\Lambda^c} )
\frac{\1_{\A}  ( \tomega'_{\Lambda} + \tomega_{\Lambda^c} )}
{\tilde{Z}_{\Lambda}(\tomega_{\Lambda^c})}
\tpi_{\Lambda}(d\tomega')
\bar{\nu}_{Col}(d\tomega)
\Big|.
$$
Let $\epsilon>0$. By \ref{propoexistenceshield} there exists  $k$ satisfying $\bar{\nu}_{Col}(W^c_k) \leq \epsilon/2$ leading to

\begin{align*}
\delta
& \leq
\Big|
\int_{\Oc} \1_{W_k} f d\bar{\nu}_{Col}
-
\int_{\Oc} \int_{\Oc}
\1_{W_k}(\tomega_{\Lambda^c})
f( \tomega'_{\Lambda} + \tomega_{\Lambda^c} )
\frac{\1_{\A}  ( \tomega'_{\Lambda} + \tomega_{\Lambda^c} )}
{\tilde{Z}_{\Lambda}(\tomega_{\Lambda^c})}
\tpi_{\Lambda}(d\tomega')
\bar{\nu}_{Col}(d\tomega)
\Big| 
+ \epsilon.
\end{align*}
Since $W_k \subseteq Col$ and since $\1_{\A}$ and $\tilde{Z}_{\Lambda}$ are $\Delta_k$-local on $W_k$ by \ref{propoexistenceshield} we obtain
\begin{align*}
\delta & \leq
\Big|
\int_{\Oc} \frac{\1_{W_k}}{\bar{\nu}(Col)} f d\bar{\nu}
-
\int_{\Oc} \int_{\Oc}
\1_{W_k}(\tomega_{\Lambda^c})
f( \tomega'_{\Lambda} + \tomega_{\Lambda^c} )
\frac{\1_{\A}  ( \tomega'_{\Lambda} + \tomega_{\Delta_k \setminus \Lambda} )}
{\tilde{Z}_{\Lambda}(\tomega_{\Delta_k \setminus \Lambda}) \bar{\nu}(Col) }
\tpi_{\Lambda}(d\tomega')
\bar{\nu}(d\tomega)
\Big| 
+ \epsilon.
\end{align*}
The local convergence  $\chi^\Lambda_n\to\bar \nu$ implies that  for $n$ large enough  
\begin{align*}
\delta & \leq
\Big|
\int_{\Oc} \frac{\1_{W_k}}{\bar{\nu}(Col)} f d\chi_n^{\Lambda}
-
\int_{\Oc} \int_{\Oc}
\1_{W_k}(\tomega_{\Lambda^c})
f( \tomega'_{\Lambda} + \tomega_{\Lambda^c} )
\frac{\1_{\A}  ( \tomega'_{\Lambda} + \tomega_{\Delta_k \setminus \Lambda} )}
{\tilde{Z}_{\Lambda}(\tomega_{\Delta_k \setminus \Lambda}) \bar{\nu}(Col) }
\tpi_{\Lambda}(d\tomega')
\chi_n^{\Lambda}(d\tomega)
\Big| 
+ 2\epsilon
\\ & =
\Big|
\int_{\Oc} \frac{\1_{W_k}}{\bar{\nu}(Col)} f d\chi_n^{\Lambda}
-
\int_{\Oc} \int_{\Oc}
\1_{W_k}(\tomega_{\Lambda^c})
f( \tomega'_{\Lambda} + \tomega_{\Lambda^c} )
\frac{\1_{\A}  ( \tomega'_{\Lambda} + \tomega_{ \Lambda ^c} )}
{\tilde{Z}_{\Lambda}(\tomega_{\Lambda ^c}) \bar{\nu}(Col) }
\tpi_{\Lambda}(d\tomega')
\chi_n^{\Lambda}(d\tomega)
\Big| 
+ 2\epsilon
\\ &=
2 \epsilon ,
\end{align*}

where the last equality is due to the DLR equations satisfied by $\chi_n^{\Lambda}$ as in \ref{propo.convlocal.DLR}. Taking $\epsilon$ as small as we want, we get $\delta=0$ and the result.

\section{Heuristic arguments for the conjecture} \label{section.conjecture}

The aim of this Section is to present heuristic arguments which strengthen the conjecture \ref{conjecture}. For simplicity we assume in all the section that the radii are uniformly bounded from below; there exists $R_0>0$, $Q([R_0;+\infty[=1$). Let us start by presenting some rigorous results for any $CRCM(z,q,Q)$ that we denote by $P$.  We don't assume for the moment that the radii are not integrable. 

Since the radii are uniformly bounded from below, the volume of the connected components are also uniformly bounded from below by $v_d R_0^d$. Therefore for any configuration $\omega$ with radii larger than $R_0$, any marked point $X=(x,R)$ and any $\Lambda\subset R^d$ containing $x$

\begin{equation}\label{borneinfenergielocale}
N_{cc}^\Lambda (\omega+\delta_X)-N_{cc}^\Lambda (\omega)\ge -\frac{v_d(R+2R_0)^d}{v_d R_0^d}\ge -C_0 R^d,
\end{equation}
where $C_0$ is the constant $(3/R_0)^d$. We deduce from Theorem 1.1 in \cite{gk} that $P$ dominates stochastically the Poisson process $\pi_{z,\tilde Q}$ where 

$$ \tilde Q(dR)= q^{-C_0R^d}Q(dR).$$ 

Let us note that the measure $ \tilde Q$ is no longer a probability measure (which does not matter), but always admits a $d$-moment; $\int R^d\tilde Q(dR)<+\infty$. This stochastic domination  provides the general behaviour of the connected components of $P$. Indeed it is well known that the germ-grain structure $L(\omega)=\cup_{(x,R)\in\omega} B(x,R)$, under $\pi_{z,\tilde Q}$, percolates for $z$ large enough. Moreover for $z$ very large, $L(\omega)$ is a large ocean of connected balls with a few holes scattered in the space which possibly contain small connected components inside. Since $P$ dominates $\pi_{z,\tilde Q}$, the same behaviour holds for $P$ or $P$ produces a large ocean of connected balls without holes. In this second case, $P$ has only one infinite connected component. The conjecture claims that, in the non integrable setting, for $z$ large enough this second behaviour occurs. 

Let us define the quantity $N_P$ which represents the mean number of connected components per unit volume produced by $P$. Let $X$ be an element of $\omega$ and $C_X(\omega)$ the connected components of $L(\omega)$ containing $X$. We say that $X=(x,R)$ is the far left point in $C_X(\omega)$ if the first coordinate of $x$ in $\R^d$ is smaller than any first coordinate of point $Y\in C_X(\omega)$. For $P$ almost all $\omega$, any bounded connected component in $L(\omega)$ has only one far left point. So there is a bijection between the connected components and the far left points. Therefore a possible definition of $N_P$ is

$$ N_P=\int_{\Omega} \sum_{(x,R)\in\omega} \1_{[0,1]^d}(x) \1_{\{(x,R) \text{ is the far left point in } C_X(\omega)\}} P(d\omega).$$

The behaviour of the connected components of $P$ described above implies that $N_P$ goes to zero when $z$ goes to infinity. We can show rigorously an exponential decay.

\begin{lemme} \label{lemmedecroissance} Assume that the radii are uniformly bounded from above; there exists $R_0>0$ such that $Q([R_0, +\infty[)=1$. Then there exist $C>0$ such that, for $z$ large enough,
$$ N_P\le e^{-Cz}.$$

\end{lemme}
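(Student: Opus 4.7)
The plan is to apply the Georgii-Nguyen-Zessin (GNZ) equation to rewrite $N_P$ as an integral over marked points, to translate the ``far-left point of its connected component'' condition into the emptiness of a half-ball, and then to conclude via the stochastic domination $P\succeq \pi^{z,\tilde Q}$ with $\tilde Q(dR)=q^{-C_0R^d}Q(dR)$ established in the paragraph preceding the lemma.

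Concretely, applying GNZ (cf.~\ref{lemme.gnz}) to the functional $F(\omega,X)=\1_{[0,1]^d}(x)\,\1_{\{X\text{ is the far-left point of }C_X(\omega+\delta_X)\}}$ with $X=(x,R)$ yields
$$ N_P \;=\; z\int_{\Omega}\!\int_{[0,1]^d\times\R^+}\! \1_{\{X\text{ far-left of }C_X(\omega+\delta_X)\}}\, q^{\,N_{cc}^{\Lambda}(\omega+\delta_X)-N_{cc}^{\Lambda}(\omega)}\, dx\,Q(dR)\,P(d\omega). $$
Since adding one ball can only merge or create one connected component, $N_{cc}^{\Lambda}(\omega+\delta_X)-N_{cc}^{\Lambda}(\omega)\le 1$, so for $q\ge 1$ the exponential factor is at most $q$. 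The key geometric observation is the following: if $X=(x,R)$ is the far-left point of $C_X(\omega+\delta_X)$, then the half-ball $D_X:=\{y\in\R^d:\,y_1<x_1,\ |y-x|\le R\}$ contains no center of $\omega$. Indeed, any $(y,R')\in\omega$ with $y\in D_X$ satisfies $|y-x|\le R\le R+R'$, hence is graph-connected to $X$, while having a strictly smaller first coordinate, contradicting the far-left hypothesis. Moreover $|D_X|=v_dR^d/2\ge v_dR_0^d/2$ thanks to $R\ge R_0$.

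Since $\{\omega(D_X\times\R^+)=0\}$ is a decreasing event, the stochastic domination $P\succeq \pi^{z,\tilde Q}$ gives
$$ P\bigl(\omega(D_X\times\R^+)=0\bigr) \;\le\; \pi^{z,\tilde Q}\bigl(\omega(D_X\times\R^+)=0\bigr) \;=\; \exp\!\bigl(-z\,\tilde Q(\R^+)\,|D_X|\bigr) \;\le\; e^{-c_0 z}, $$
with $c_0:=\tilde Q(\R^+)\,v_dR_0^d/2>0$, the strict positivity coming from the fact that $Q$ is a probability measure on $\R^+$ and $q^{-C_0R^d}>0$ everywhere on $\R^+$. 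Plugging back into the GNZ identity yields $N_P\le qz\,e^{-c_0 z}$, which for any $C\in(0,c_0)$ is bounded by $e^{-Cz}$ as soon as $z$ is large enough.

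The only technicality worth flagging is the applicability of GNZ to the non-local functional $F$ above: whether $X$ is the far-left point of $C_X$ depends on the whole configuration, so one must invoke GNZ in its general (not only local) form, which is standard. Beyond this, the half-ball argument is elementary and the stochastic domination has already been set up, so the proof is short.
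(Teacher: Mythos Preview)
Your proof is correct and follows the same route as the paper's: apply GNZ, bound the density factor by $q$ via $N_{cc}^\Lambda(\omega+\delta_X)-N_{cc}^\Lambda(\omega)\le 1$, and use the stochastic domination $P\succeq\pi^{z,\tilde Q}$ on a decreasing event implied by the far-left condition. The only cosmetic difference is the choice of decreasing event---you use emptiness of the half-ball $D_X$ of radius $R$ around $x$, while the paper uses $0\notin L(\omega_{\text{left}})$---and both yield the same exponential bound $N_P\le qz\,e^{-cz}$ up to the value of the constant $c$.
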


\begin{proof}

Thanks to the GNZ equation \eqref{eq.GNZ.CRCM} and the stationarity of $P$, we have

\begin{eqnarray*}
N_P & =& z \int_{\Omega} \int_{[0,1]^d} \int_0^{+\infty}  q^{N^\Lambda_{cc}(\omega+\delta(x,R))-N^\Lambda_{cc}(\omega)}\\
& & \1_{\{(x,R) \text{ is the far left point in } C_{(x,R)}(\omega+\delta_{(x,R)})\}} Q(dR) dx P(d\omega)\\
& \le & zq   \int_0^{+\infty} P\Big((0,R) \text{ is the far left point in } C_{(0,R)}(\omega+\delta_{(0,R)})\Big) Q(dR)  
\end{eqnarray*}

Since that $P$ dominates $\pi_{z,\tilde Q}$ we get

\begin{eqnarray}\label{decroissance}
N_P & \le & zq  \int_0^{+\infty} \pi_{z,\tilde Q}\Big((0,R) \text{ is the far left point in } C_{(0,R)}(\omega+\delta_{(0,R)})\Big) Q(dR) 
\nonumber \\ 
& \le & zq   \int_0^{+\infty} \pi_{z,\tilde Q}\Big( 0\notin L(\omega_{\text{left}})\Big) Q(dR) 
\nonumber \\
& = & zq   e^{-z\frac 12 v_d \int_0^{+\infty} R^d \tilde Q(dR)},
\end{eqnarray}
 
where $\omega_{\text{left}}$ is the configuration of points $(x,R) \in\omega$ whom the first coordinate of $x$ is negative. The last equality in \eqref{decroissance} comes from standard computations for the Boolean model \cite{chiu2013}. In adjusting correctly the constant $C$ the proof of the lemma follows.

\end{proof}

Let us turn now to a non rigorous proof of the conjecture in the case  $Q(dR)=\frac{d-1}{R^d}\1_{[1,+\infty[}(R)dR$. Other distribution $Q$ could have been considered as well. Let us show that $N_P=0$ for $z$ large enough which leads to $P=\pi_{z,Q}$. 

We assume that $P$ is extremal (so ergodic) in the simplex of $CRCM(z,q,Q)$. Since $N_P$ is the mean number of connected components per unit volume, thanks to the ergodic Theorem, for any $x\in\R^d$ and $P$-almost every $\omega$ 

$$ \lim_{R\mapsto \infty}   \frac{N_{cc}^\Lambda (\omega+\delta_{(x,R)})-N_{cc}^\Lambda (\omega)}{v_d R^d} = -N_P.$$

So there exists $K_{x,\omega}>0$ such that for all $R>0$

$$
N_{cc}^\Lambda (\omega+\delta_{(x,R)})-N_{cc}^\Lambda (\omega)
\ge -2 v_d N_P R^d-K_{x,\omega}.$$

Assume that this constant $K$ can be chosen uniformly in $x$ and $\omega$. Obviously this is wrong but, choosing $K$ very large, this inequality holds with high probability which gives a sense to this approximation. It is {\bf the unique non rigorous part of this section}. 

Following the same computations as in the proof of \ref{lemmedecroissance} we obtain that

\begin{eqnarray*}\label{equationimpossible}
N_P & \le &  zq   e^{-\frac 12 z v_d \int_0^{+\infty} R^d q^{-2 v_d N_P R^d -K} Q(dR)} \\
& = &  zq   e^{-\frac 12 z v_d q^{-K}\int_1^{+\infty} q^{-2 v_d N_P R^d} dR} \\
& \le &  zq   e^{-cz N_P^{-1/d} }  
\end{eqnarray*}
where $c$ is a non negative constant. A simple analysis of this inequality shows that for $z$ large enough, the only one solution is $N_P=0$.\\

{\it Acknowledgement:} This work was supported in part by the Labex CEMPI  (ANR-11-LABX-0007-01).

\nocite{*}
\bibliographystyle{plain}
\bibliography{biblio}

\end{document}